\newtheorem{thm}{Theorem}[section]
\newtheorem*{acknowledgement*}{Acknowledgement}
\newtheorem{cor}[thm]{Corollary}
\newtheorem{lem}[thm]{Lemma}
\newtheorem{ques}[thm]{Question}
\newtheorem{prop}[thm]{Proposition}
\theoremstyle{definition}
\newtheorem{defn}[thm]{Definition}
\theoremstyle{remark}
\newtheorem{rem}[thm]{Remark}
\numberwithin{equation}{section}
\newcommand{\set}[1]{\left\{#1\right\}}
\newcommand{\Real}{\mathbb R}
\newcommand{\R}{\mathbb R}
\newcommand{\func}[1]{\ensuremath{\mathrm{#1} \:} }
\newcommand{\Div}[0]{\func{div}}
\newcommand{\eE}[0]{\mathbf{e}}
\newcommand{\spt}[0]{\mathrm{spt}}
\newcommand{\reg}[0]{\mathrm{reg}}
\newcommand{\sing}[0]{\mathrm{sing}}
\newcommand{\Cat}[0]{\mathrm{Cat}}
\newcommand{\pa}[0]{\partial}
\newcommand{\Ha}{\mathcal{H}}
\newcommand{\Om}{\Omega}
\title[Symmetry and Rigidity of Minimal Surfaces with Singularities]{Symmetry and Rigidity of Minimal Surfaces \\ with Plateau-like Singularities}
\author{Jacob Bernstein}
\address{Department of Mathematics, Johns Hopkins University, 3400 N. Charles Street, Baltimore, MD 21218}
\email{bernstein@math.jhu.edu}
\author{Francesco Maggi}
\address{Department of Mathematics, The University of Texas at Austin, 2515 Speedway, Stop C1200, Austin TX 78712-1202, USA}
\email{maggi@math.utexas.edu}
\thanks{JB was partially supported by the NSF Grant  DMS-1609340 and DMS-1904674. FM was partially supported by the NSF Grants DMS-156535 and DMS-FRG-1854344.}
\begin{document}
\begin{abstract}
By employing the method of moving planes in a novel way we extend some classical symmetry and rigidity results for smooth minimal surfaces to surfaces that have singularities of the sort typically observed in soap films.
\end{abstract}
\maketitle


\section{Introduction}

\subsection{Overview} Minimal surfaces in $\R^3$ provide the standard mathematical model of soap films at equilibrium. Nevertheless, there is a historical mismatch between the classical theory of minimal surfaces, which focuses on smooth immersions with vanishing mean curvature, and the richer structures documented experimentally since the pioneering work of Plateau \cite{plateau}. Indeed, two types of singular points are observed in soap films, called $Y$ and $T$ points; see Figure \ref{fig yt} below. We call the surfaces described in experiments {\it minimal Plateau surfaces} and ask:

\medskip

{\it To what extent may the classical theory of minimal surfaces be generalized to minimal Plateau surfaces and what new conclusions may be drawn?}

\medskip

This paper studies this question in the model case provided by {\bf Schoen's rigidity theorem for catenoids} \cite{SchoenSymmetry}: a (classical) minimal surface in $\R^3$ spanning two parallel circles with centers on the same axis has rotational symmetry about this axis and so is either a pair of flat disks or a subset of a catenoid. Schoen's theorem is an interesting model case for two reasons: (i) its extension to minimal Plateau surfaces requires the inclusion of new cases of rigidity, given by singular catenoids;  (ii) Schoen's proof uses Alexandrov's method of moving planes \cite{alexandrov}, which has been almost exclusively applied in the smooth setting: thus its adaptation to a class containing singular surfaces is notable. The only other application of the moving planes method in a non-smooth setting that we are aware of is the recent work \cite{HaslhoferEtAl,HaHeWh}. However, in that work \emph{a posteriori} regularity is derived from the moving planes method despite allowing \emph{a priori} singularities. This is unlike our applications in which genuinely singular surfaces are symmetric examples; see
\begin{figure}
	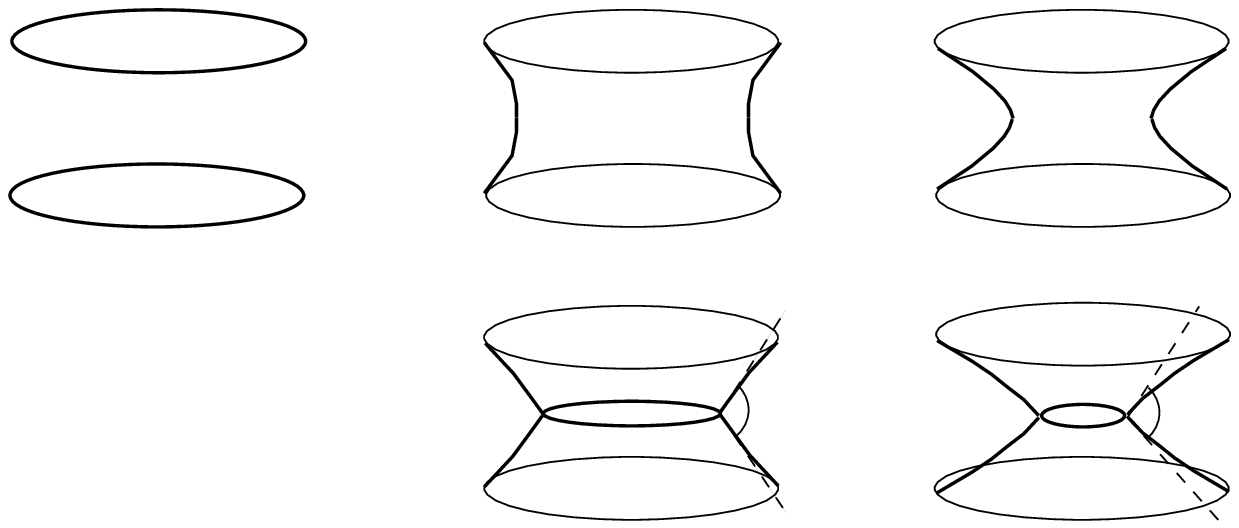\caption{{\small Two parallel circles with same radii lying at a sufficiently small distance span exactly three smooth minimal surfaces: a pair of disks, a ``fat'' catenoid (which is stable) and a ``skinny'' catenoid (which is unstable). The same circles span five minimal Plateau surfaces, the two new cases being defined by a pair of ``singular'' $Y$-catenoids.}}\label{fig catenoids}
\end{figure}
Figure \ref{fig catenoids}.

\medskip

This introduction is organized as follows. In Section \ref{section schoen rigidity} we recall the rigidity theorems from \cite{SchoenSymmetry}. In Sections \ref{section plateau surfaces} and \ref{section cell structure} we define Plateau surfaces and introduce a notion of orientability for them, that we call the cell structure condition. In Section \ref{section schoen plateau} we state our main results, which extend Schoen's rigidity theorems to minimal Plateau surfaces. Finally, in Sections \ref{section physical motivation} and  \ref{section nonsmooth rigidity} we discuss further the physical and mathematical motivations for Plateau surfaces and situate them within the more general frameworks provided by geometric measure theory.

\subsection{Schoen's rigidity theorems}\label{section schoen rigidity} The {\it first rigidity theorem} proved  in \cite{SchoenSymmetry} states that a minimal immersion of a compact connected surface with  boundary consisting of a pair of coaxial circles in parallel planes is, up to rigid motion and dilation, a piece of the catenoid.  Here the catenoid is the minimal surface
$$
\Cat=\set{x_1^2+x_2^2=\cosh^2 x_3}\,.
$$
The {\it second rigidity theorem} is more global in nature.  It says that, up to rigid motion and dilation, any complete minimal immersion that has two regular ends, must either be a catenoid or a pair of planes.
This means each end is modeled on either a catenoidal or planar end -- see Definition \ref{def regular end}. In both rigidity theorems the hypotheses that the minimal surface be an immersion is essential, as can be seen by the example of the \emph{Y-catenoid},
$$
\Cat_Y=\set{x_1^2+x_2^2=\cosh^2 (|x_3|+h_0)}\bigcup \set{3(x_1^2+x_2^2)\leq 4, x_3=0}
$$
(where $h_0=\log(3)/2$ is the unique solution to $\sinh h_0=1/\sqrt{3}$). Indeed, $\Cat_Y$ is minimal both in a distributional sense (that is, as a stationary $2$-dimensional varifold in $\R^3$) and is the prototypical example of what we call a minimal Plateau surface.

\subsection{Plateau surfaces}\label{section plateau surfaces}
Let $\mathcal{K}$ a family of cones in $\R^3$ with vertex the origin such that if $K_1,K_2\in\mathcal{K}$ and $K_1\ne K_2$ then $K_1\ne R(K_2)$ for every isometry $R$ of $\R^3$, and such that
\[
\{P,H\}\subset\mathcal{K}\,,
\]
where $P=\set{x_3=0}$ is a plane and $H=\set{x_3=0, x_1\geq 0}$ a half-plane. In particular, if $K\in\mathcal{K}\setminus\{P,H\}$, then $K$ is neither a plane nor a half-plane. Given $U\subset\Real^3$ open, a closed subset $\Sigma\subset U$ is a {\bf $\mathcal{K}$-surface in $U$} if, for some $\alpha\in(0,1)$ and for all $p\in \Sigma\cap U$, there are $r>0$ and a $C^{1,\alpha}$-regular diffeomorphism $\phi: B_{r}(p)\subset U\to \Real^3$ so that $\phi(\Sigma\cap B_{r}(p))\in\mathcal{K}$ and $D\phi_p  \in O(3)$, i.e., $D \phi_p$ is an orthogonal linear transformation. The element of $\mathcal{K}$ corresponding to $p\in\Sigma$ is unique
and is denoted by
\[
\hat{T}_p\Sigma\in \mathcal{K}\,.
\]
The {\bf tangent cone} of $\Sigma$ at $p$, denoted $T_p\Sigma$, is defined by  $D\phi_p\left(T_p\Sigma\right)= \hat{T}_p\Sigma$.  Clearly, $T_p\Sigma=\lim_{\rho\to 0^+} (\Sigma-p)/\rho$ where the limit is in the pointed Hausdorff sense.

For each $K\in\mathcal{K}$, we let $\Sigma_K=\{p\in\Sigma\cap U:\hat{T}_p\Sigma=K\}$. Correspondingly, we identify the sets of {\bf interior points} ${\rm int}(\Sigma)=\Sigma_P$, of {\bf boundary points} $\partial\Sigma=\Sigma_H$, of {\bf regular points} $\reg(\Sigma)={\rm int}(\Sigma)\cup\pa\Sigma$, and of {\bf singular points} $\sing(\Sigma)=\Sigma\setminus\reg(\Sigma)$. By construction, an H\"older continuous vector field $\nu^{{\rm co}}_\Sigma$ of {\bf outer unit conormals} to $\Sigma$ can be defined along $\pa\Sigma$. When $\sing(\Sigma)=\emptyset$, the notion of $\mathcal{K}$-surface reduces to that of {\bf regular surface} (with boundary and of class $C^{1,\alpha}$) in $U$.

\medskip

A (relatively) closed subset $\Sigma\subset U$, in an open subset $U\subset \Real^3$, is a {\bf Plateau surface in $U$} if: {\bf (a)} $\Sigma$ is a $\mathcal{K}$-surface in $U$ for
\begin{equation}
\label{K plateau}
\mathcal{K}=\{P,H,Y,T\}\,,
\end{equation}
where $Y=H\cup H_{120}\cup H_{-120}$ (and $H_\theta$ is the rotation of $H$ by $\theta$-degrees about the $x_2$-axis), and $T$ is the cone over the edges of a reference regular tetrahedron centered at the origin, see
\begin{figure}
	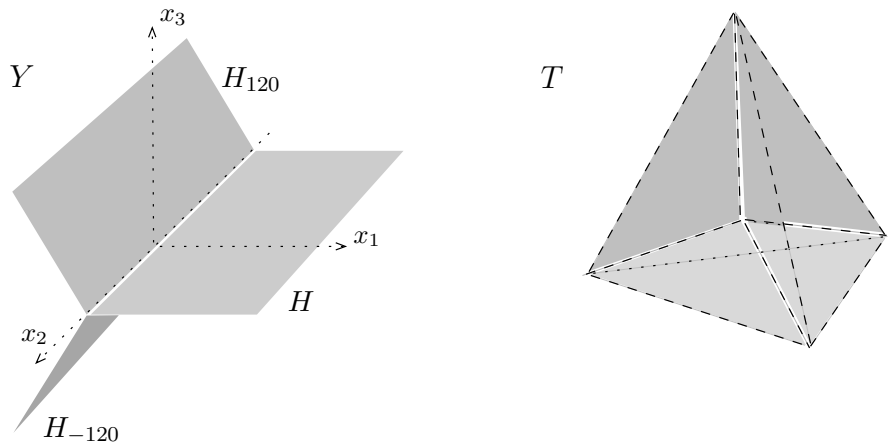\caption{{\small The model $Y$ and $T$ cones in $\mathcal{K}$. Here $H=\{x_3=0,x_1\ge 0\}$ and $H_\theta$ is obtained by rotating $H$ around the $x_2$-axis by $\theta$-degrees.}}\label{fig yt}
\end{figure}
Figure \ref{fig yt}; {\bf (b)} each connected component of $\mathrm{int}(\Sigma)$ has (weak) constant mean curvature. If $\mathrm{int}(\Sigma)$ has zero mean curvature, then $\Sigma$ is a {\bf minimal Plateau surface in $U$}. When $\Sigma_T=\emptyset$, one calls $\Sigma$ a {\bf $Y$-surface}.

\begin{rem}\label{rmk PS from c1a to analytic}
	{\rm If $\Sigma$ is a Plateau surface, then $\Sigma\backslash (\partial \Sigma\cup \Sigma_T)$ admits smooth (in fact real-analytic) charts. Indeed, standard elliptic regularity ensures the smoothness of any $C^{1,\alpha}$-graph whose mean curvature is constant in a weak sense and so $\mathrm{int}(\Sigma)$ consists of smooth surfaces.  Furthermore, work of Kinderleher, Nirenberg and Spruck \cite[Theorem 5.2]{KNS} implies that each component of $\Sigma_Y$ is a smooth curve.}
\end{rem}


\subsection{Orientability of Plateau surfaces}\label{section cell structure}
We introduce a notion of orientability in the Plateau setting that generalizes the notion of a regular surface separating an ambient three-manifold.
A Plateau surface $\Sigma$ {\bf defines a cell structure} in $U\subset\R^3$ open, if there exists a family of open, connected sets
$\mathcal{C}(\Sigma)=\set{U^i: 1\leq i \leq N}$, called the {\bf cells of $\Sigma$}, such that
\begin{equation}
\label{cell structure}
\partial \Sigma \subset \partial U\,,\qquad U\backslash \Sigma =\bigcup_{i=1}^N U^i
\end{equation}
and, for each $p\in \Sigma\cap U=\Sigma\setminus\partial\Sigma$ there is a $\rho>0$ so $B_{\rho}(p)\subset U$ and, for each $0<\rho'<\rho$ and $i=1, \ldots, N$,  $B_{\rho'}(p)\cap U^i$ is connected (possibly empty).

\medskip

Clearly,  $\Cat$  defines a cell structure in $\Real^3$ with two cells while $\Cat_Y$ defines a cell structure in $\Real^3$ with three cells. An example of a surface not defining a cell structure is illustrated in Figure \ref{fig regularbi}-(b).  A connected regular surface defines a cell structure in $U$ when it is separating in $U$.
Observe that the tetrahedral cone $T\subset \Real^3$ defines a cell structure in $\mathbb{R}^3$ but is not a flat chain mod 3.

\subsection{Schoen's rigidity theorems for Plateau surfaces}\label{section schoen plateau}
Let us recall some notation and terminology from \cite{SchoenSymmetry}. A set $\Sigma\subset \Real^3$ is a {\bf graph} if $\pi|_{\Sigma}:\Sigma \to \Real^2$ is one-to-one, where $\pi: \Real^3\to \Real^2$ is the projection $\pi((\mathbf{y},x_3))=\mathbf{y}$. We say that $\Sigma$ is a {\bf graph of locally bounded slope} if it is a graph and there exists a (one- or two-dimensional) $C^{1}$-submanifold $\sigma$ of $\R^3$ such that $\Sigma=\bar{\sigma}$ and such that $T_p \sigma$ is transverse to $\mathbf{e}_3$ for each $p\in \sigma$ -- for example, $\Sigma=\{x_3\ge0\,,x_1^2+x_2^2+x_3^2=1\}$ and $\Sigma=\{x_3\ge0\,, x_2=0\,,x_1^2+x_3^2=1\}$ are both graphs of locally bounded slope.

Given an open subset $\Omega\subset \mathbb{R}^2$, let
\[
C_{\Omega}=\set{(\mathbf{y}, z): \mathbf{y}\in \Omega}\subset \R^3
\]
be the cylinder over $\Omega$. A {\bf minimal Plateau bi-graph} over $\Omega$ is a (not necessarily connected) minimal Plateau surface, $\Sigma$, satisfying $\Sigma\subset\bar{C}_\Omega$,  $\partial\Sigma=\Sigma\cap\partial C_\Omega$, and so
\[
\Sigma_{0^+}=\Sigma\cap\{x_3\ge0\}\qquad\mbox{ and }\qquad\Sigma_{0^-}=\Sigma\cap\{x_3\le 0\}\,,
\]
are both graphs of locally bounded slope; see
\begin{figure}
	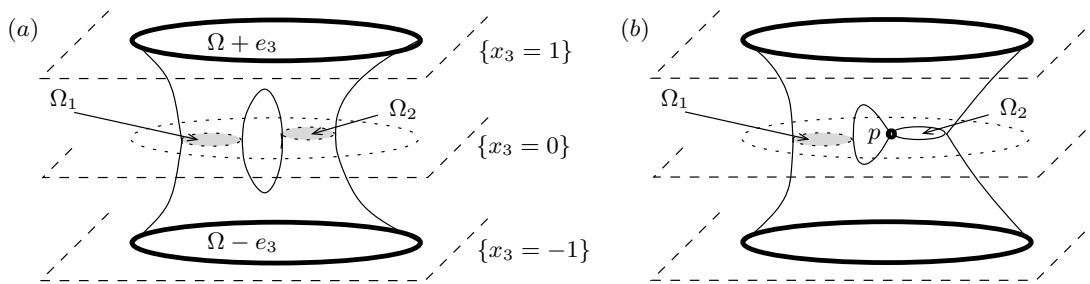\caption{\small{Illustrations of the definition of minimal Plateau bi-graph: (a) a regular minimal Plateau bi-graph that is not simple; (b) a non-simple minimal Plateau bi-graph $\Sigma$ with non-trivial singular set; notice that in this case $\Om_1$ is not part of $\Sigma$, but $\Om_2\subset\Sigma$, with $\partial\Omega_2=\sing(\Sigma)=\Sigma_Y$.}}\label{fig regularbi}
\end{figure}
Figure \ref{fig regularbi}. Clearly, such $\Sigma$ must have $\Sigma_T=\emptyset$, $\Sigma_Y\subset\{x_3=0\}$, and if $p\in\Sigma_Y$, then the spine of $T_p\Sigma$ is contained in $\{x_3=0\}$.   If, in addition, $\Sigma\cap\{x_3=0\}$ is empty or is the boundary of a single topological disk contained in $\{x_3=0\}$, then $\Sigma$ is {\bf simple}. For instance, $\Cat\cap \bar{C}_{B_R}$,  $ \set{|x_3|=1}\cap \bar{C}_R$ and $\Cat_Y\cap \bar{C}_{B_R}$ are all simple minimal Plateau bi-graphs for appropriate $R$. Simple minimal Plateau bi-graphs define a cell structure in $C_{\Omega}$, but this is not necessarily the case when $\Sigma$ is not simple; see
Figure \ref{fig regularbi}-(b).

Our extension of Schoen's first rigidity result to the Plateau setting is as follows.
\begin{thm}\label{MainThm}
	Let $\Omega\subset \Real^2$ be a bounded, open convex set with $C^1$-boundary, and let $\Sigma$ be a compact, minimal Plateau surface in $\R^3$ with
	\[
	\partial\Sigma=(\partial\Omega)\times\{1,-1\}\,.
	\]
	If $\Sigma$ defines a cell structure in $U=\set{|x_3|<1}$, then $\Sigma$ is a simple minimal Plateau bi-graph, which is symmetric by reflection through $\{x_3=0\}$. Moreover, if $\Omega$ is the interior of a circle, then $\Sigma$ is either a union of two disks, or, up to translation and dilation,  is a subset of $\Cat$ or of $\Cat_Y$.
\end{thm}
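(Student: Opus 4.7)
The plan is to extend Schoen's proof, which uses Alexandrov's moving planes method, to the Plateau setting. The central new ingredient is a \emph{strong comparison principle at $Y$-singularities}: if two minimal Plateau surfaces $\Sigma_1,\Sigma_2$ share a common point $p$ with the same tangent cone (of type $P$, $H$, or $Y$), and $\Sigma_1$ lies in the closure of a single cell of $\Sigma_2$ in a neighborhood of $p$, then $\Sigma_1=\Sigma_2$ near $p$. At regular points this reduces to the classical strong maximum principle or Hopf boundary lemma. At a $Y$-point, one invokes the real-analyticity of each smooth sheet up to the spine (Remark \ref{rmk PS from c1a to analytic}, via \cite{KNS}) to apply the classical principle sheet-by-sheet; the rigid $120^\circ$ angle condition along the spine then transfers a contact on one sheet to the other two. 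That $\Sigma_T=\emptyset$ in the final configuration will be obtained a posteriori from the symmetry, so no $T$-point analogue is needed.

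Next I would run the moving planes method with horizontal planes. Let $\Pi_t=\{x_3=t\}$ and $R_t$ denote reflection through $\Pi_t$. For $t$ slightly less than $1$, the convexity of $\Omega$, the boundary condition $\partial\Sigma=(\partial\Omega)\times\{\pm 1\}$, and the cell structure imply that the reflected cap $R_t(\Sigma\cap\{x_3>t\})$ lies in the closure of a single cell $U^\star\in\mathcal{C}(\Sigma)$, namely the one adjacent to $\Omega\times\{1\}$ from below. Let $t_0$ be the infimum of those $t\in[0,1]$ for which this containment persists for all $s\in(t,1)$. At $t=t_0$ the reflected cap touches $\Sigma\cap\{x_3<t_0\}$, and the strong comparison principle above, combined with analytic continuation on smooth pieces and a cell-matching argument, shows that $\Sigma$ is globally invariant under $R_{t_0}$. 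Since $R_{t_0}$ must map $\partial\Sigma$ to itself, the two circles at $x_3=\pm 1$ force $t_0=0$. The same moving planes argument shows that $\Sigma\cap\{x_3>0\}$ cannot have a vertical tangent (otherwise the top cap would be trapped above $\Pi_t$ for some $t>0$), so it is a graph of locally bounded slope over $\Omega$. Combined with the reflection symmetry, this establishes that $\Sigma$ is a simple minimal Plateau bi-graph with $\Sigma_T=\emptyset$ and $\Sigma_Y\subset\{x_3=0\}$.

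For the case $\Omega=B_R$, rotational symmetry is obtained by repeating the moving planes method with vertical planes containing the $x_3$-axis. Every such plane is a candidate symmetry plane for $\partial\Sigma$, and the comparison principle produces reflection symmetry through each, yielding full rotational symmetry. A simple minimal Plateau bi-graph with rotational and horizontal-reflection symmetry is then determined by its generating curve in $\{(r,z):r\geq 0\}$; the smooth part of this curve satisfies the classical catenary ODE for minimal surfaces of revolution, and a $Y$-point of $\Sigma$ corresponds to a point $(r_0,0)$ at which the generating curve meets a horizontal segment (terminating on the axis $r=0$) at an angle of $120^\circ$. The boundary condition $(R,1)$ on the generating curve, together with the phase-plane analysis of the catenary ODE, leaves only three possibilities, corresponding respectively to a pair of disks, a piece of $\Cat$, and a piece of $\Cat_Y$.

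The main obstacle is the verification of the strong comparison principle at $Y$-points in the specific configurations arising in the moving planes method. When first contact occurs at a $Y$-point, the reflected surface may meet the original along more than one of the three sheets simultaneously, and one must argue, using the cell structure, that each such geometric possibility is genuinely compatible with the hypotheses of a Hopf-type conclusion. The propagation of local symmetry to global symmetry through the cells is the closely related technical subtlety that, together with the $Y$-point comparison principle, forms the technical heart of the proof.
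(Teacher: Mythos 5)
Your overall strategy---horizontal moving planes, a Hopf-type comparison at $Y$-points, and propagation of the infinitesimal symmetry through the cells---is the same as the paper's, and that part of the outline is sound in spirit. The genuine gap is the assertion that graphicality of $\Sigma_{0^\pm}$ ``combined with the reflection symmetry \ldots establishes that $\Sigma$ is a \emph{simple} minimal Plateau bi-graph.'' Being a bi-graph symmetric across $\{x_3=0\}$ does not imply simplicity: $\Sigma\cap\{x_3=0\}$ could a priori consist of several closed curves of $Y$-points (or of regular points), and nothing in the moving-planes argument controls the number of components of this free boundary; Figure \ref{fig regularbi}(a) shows precisely a symmetric regular minimal Plateau bi-graph that is not simple. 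In the paper this topological step is a separate result, Proposition \ref{LopezRosProp}: the constant contact angle ($90^\circ$ or $120^\circ$) forces the flux of every cycle of $\Sigma_+$ to be vertical, the Lopez--Ros deformation reduces the $120^\circ$ case to the orthogonal one, Ekholm--Weinholtz--White gives embeddedness of the deformed surface, and Ros's free-boundary theorem then shows $\Sigma_+$ is an annulus. Your vertical-plane rotational-symmetry argument could substitute for this only when $\Omega$ is a disk (this is essentially the Pyo alternative the paper mentions); for a general convex $\Omega$ the simplicity claim in the statement is simply not proved by your proposal.

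Two further points. First, deferring $\Sigma_T=\emptyset$ to ``a posteriori from the symmetry'' is circular: during the moving-planes process you must rule out that the critical plane first meets $\Sigma$ at a $T$-point, or at a point with vertical tangent plane, \emph{before} any symmetry is available. The paper does this through the removable-singularity/blow-up analysis of Lemma \ref{RemoveSingLem} (using that the part above the plane is a graph, the density bound, and the geodesic-net rigidity of Appendix \ref{appendix geonets}), together with the cell-structure bookkeeping that forces the tangent $Y$-cone at a critical singular point to be reflection symmetric with horizontal spine. Second, your comparison principle at a $Y$-point is stated loosely: in the actual application one compares the \emph{reflected regular cap} with the lower sheet, written as graphs in a horizontal direction over a $C^{1,\alpha}$ domain bounded by the spine, and one needs the Hopf lemma for $C^{1,\alpha}$ domains (as in Lemma \ref{SymmetryLem}), not an analytic continuation sheet-by-sheet; and the degenerate configurations (a flat disk of $\Sigma$ at the critical height, extra cells reaching the critical plane, and the a priori containment $\Sigma\setminus\partial\Sigma\subset C_\Omega$ via Solomon--White with catenoidal barriers) all require separate treatment that the proposal omits.
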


\begin{rem}
	Unlike Schoen's first result, our proof does not apply to arbitrary pairs of coaxial circles.  However, we expect the more general result is also true.
\end{rem}

We also obtain an analog of Schoen's second rigidity theorem.  Namely, global rigidity and symmetry for minimal Plateau surfaces with two regular ends that are subject to the same orientability condition used in the previous theorem. The precise definition of regular end is given later on in Definition \ref{def regular end}.

\begin{thm}\label{GlobalThm}
	Let $\Sigma$ be a minimal Plateau surface that defines a cell structure in $\Real^3$.  If there is an $R_0>0$ so that $\Sigma\backslash B_{R_0}$ has two regular ends, then, up to a rigid motion and dilation, $\Sigma$ is either a pair of planes, a catenoid or a $Y$-catenoid.
\end{thm}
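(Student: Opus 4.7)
My plan is to adapt Schoen's original moving planes scheme for complete minimal surfaces with two regular ends to the Plateau setting, using the same kind of singularity-aware maximum principle that underlies Theorem \ref{MainThm}. The argument breaks into three stages: (i) use the regular end hypothesis to identify a common asymptotic axis $\ell$; (ii) run moving planes containing $\ell$ to establish rotational symmetry about $\ell$, and moving planes perpendicular to $\ell$ to get an equatorial reflectional symmetry; (iii) classify rotationally symmetric minimal Plateau surfaces with two regular ends.

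For stage (i), outside a large ball each end is, by definition, a smooth minimal graph asymptotic to either a plane or a half-catenoid. A preliminary moving planes argument in the exterior region, where $\Sigma$ is smooth so the classical maximum principle applies, forces the two ends to share a common asymptotic axis direction $\ell$ and, if one end is catenoidal, to both be catenoidal; the mixed catenoidal/planar case is ruled out by comparing decay rates toward the asymptote. For stage (ii), I fix a plane $\Pi$ through $\ell$ and slide a family of translates $\Pi_s$ toward $\Sigma$ from a position far enough away that the reflection of the far part of $\Sigma$ lies on the near side of $\Pi_s$; the cell structure hypothesis provides the global orientation needed to compare $\Sigma$ with its reflection across each $\Pi_s$. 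At the first-contact parameter, the classical strong maximum principle and Hopf lemma force coincidence at an interior regular touching point, and the singular maximum principle already used in Theorem \ref{MainThm} plays the same role at a $Y$-point, applied sheet-by-sheet with the $120^\circ$ condition ensuring that the reflection preserves the spine. A $T$-point cannot arise as a first-contact point, since no plane through the $T$-vertex leaves the tetrahedral cone invariant in a way compatible with one-sided touching; this in particular shows $\Sigma_T=\emptyset$. Running this procedure over every plane through $\ell$ yields reflectional symmetry across each, hence full rotational symmetry about $\ell$, while a separate moving planes run with planes orthogonal to $\ell$ delivers the horizontal reflectional symmetry (trivial in the pair-of-planes case).

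For stage (iii), rotational symmetry reduces the problem to a one-dimensional analysis in the $(r,z)$ half-plane: smooth arcs of the generating profile satisfy the catenary ODE $r\, r'' = 1 + (r')^2$ or its degenerate planar solutions, and $Y$-singular circles appear as corners where a smooth arc meets a horizontal segment along the $r$-axis, the $120^\circ$ Plateau condition forcing the smooth arc to meet the segment at an angle of $60^\circ$. Enumerating profiles compatible with both of these conditions and with the two-regular-end hypothesis produces exactly three configurations: a pair of parallel planes, the catenoid $\Cat$, and the $Y$-catenoid $\Cat_Y$ (with $h_0 = \log(3)/2$ forced by $\sinh h_0 = 1/\sqrt{3}$). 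The main obstacle is stage (ii): the singular Hopf lemma at $Y$-points must be sharp enough to detect first contact along the spine rather than only on the open sheets, and the moving planes must be initiated uniformly across all planes through $\ell$, which in the end rests on quantitative asymptotic control coming from the two regular ends.
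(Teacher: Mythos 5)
Your overall architecture (rotational symmetry via vertical moving planes on the full singular surface, then an ODE classification of rotationally symmetric profiles) is genuinely different from the paper's, and its central step has a real gap that you yourself flag but do not close. The paper deliberately runs the moving planes only in the direction perpendicular to the common asymptotic plane, i.e.\ reflections across horizontal planes $\set{x_3=t}$ (Theorem \ref{CylThm} applied to large cylinders $\Sigma\cap\bar C_R$), and this order of operations is what makes the singular analysis tractable: the removable singularity result (Lemma \ref{RemoveSingLem}) and the Hopf-type lemma (Lemma \ref{SymmetryLem}) are both tailored to the configuration in which the part of $\Sigma$ above the reflection plane is a graph of locally bounded slope and the tangent cone at a contact point is a $Y$ whose spine lies \emph{in} the reflection plane with one sheet contained in it. It is only \emph{after} this step that one knows $\sing(\Sigma)\subset\set{x_3=0}$, $\Sigma_T=\emptyset$, and that $\Sigma\cap\set{x_3>0}$ is a smooth graph meeting $\set{x_3=0}$ at a constant $120^\circ$ angle; the rotational symmetry is then obtained by reducing to the smooth case (Lopez--Ros deformation \cite{LopezRos} plus Schoen's theorem for immersions \cite{SchoenSymmetry}, or alternatively classical moving planes on the smooth upper half as in \cite{Pyo}). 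Your plan inverts this order: you want to reflect across vertical planes through an axis $\ell$ while the singular set is still completely uncontrolled. At a first-contact point you may then face a $Y$-point (of $\Sigma$, of its reflection, or of both) whose spine is in general position relative to the reflection plane, and the ``singular Hopf lemma already used in Theorem \ref{MainThm}'' simply does not cover this situation -- Lemma \ref{SymmetryLem} requires $R_0(T_p\Sigma)=T_p\Sigma$ with the specific horizontal-spine structure, and no analogue for transverse spines is proved anywhere in the paper. Since you name exactly this as ``the main obstacle'' without supplying the missing comparison lemma, the argument for stage (ii) is incomplete at its decisive point.

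Two further items are not minor. First, your claim that ruling out $T$-points as first-contact points ``in particular shows $\Sigma_T=\emptyset$'' is a non sequitur: a point can fail to be a first-contact point for every plane through $\ell$ and still be a $T$-point of $\Sigma$; in the paper $\Sigma_T=\emptyset$ is a consequence of Lemma \ref{RemoveSingLem} combined with the graphicality produced by the horizontal-plane reflection, not of any contact argument. Second, stage (i) as stated is shaky: a planar end has no distinguished axis, and the mixed planar/catenoidal case is excluded not by ``comparing decay rates'' but by the coincidence of the asymptotic planes (Lemma \ref{GlobalAuxLem}, which uses that a transverse intersection of the two asymptotic planes would violate the local Plateau structure at infinity) together with flux balancing, $a_1+a_2=0$, and the strong half-space theorem to get $a_1>0>a_2$. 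Your stage (iii) profile analysis is fine in spirit and would give the same list $\set{\mbox{two planes}},\Cat,\Cat_Y$ once rotational symmetry is known, but as it stands the proposal does not establish that symmetry for a genuinely singular $\Sigma$.
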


\begin{rem}
	It is unclear whether the assumption that the minimal Plateau surfaces define cell structures in Theorems \ref{MainThm} and \ref{GlobalThm} are necessary or just a technical hypothesis needed for our proof. This point is further discussed in Section \ref{Questions}.
\end{rem}

\subsection{Physical and mathematical motivation}\label{section physical motivation} The physical motivation for the notion of Plateau surface proposed in this paper lies in the celebrated Plateau's laws, which are empirical observations about the geometric structure of soap films. Plateau's laws state that soap films at equilibrium are arranged into smooth surfaces with constant mean curvature, meeting in threes along edges at 120$^\circ$ degrees angles; and that these edges meet in four at vertex points, and they do so at the angles defined by the skeleton of a regular tetrahedron. The definition given in Section \ref{section plateau surfaces} simply captures, in exact mathematical terms, all the features listed in Plateau's laws -- as explained in Remark \ref{rmk PS from c1a to analytic}, the $C^{1,\alpha}$-regularity requirement is purely technical. Thus Plateau surfaces match Plateau's description of soap films arising in clusters of soap bubbles, while minimal Plateau surfaces correspond to soap films spanning a fixed ``wire frame".

\medskip

The mathematical justification for our definition of Plateau surface is given by Taylor's theorem \cite{taylor76}. Indeed, Taylor proved that if $U\subset\R^3$ is open, $\Sigma$ is a relatively compact and rectifiable set in $U$, $\Sigma=U\cap\spt(\mathcal{H}^2\llcorner \Sigma)$, and, for some $\alpha>2$,
\begin{equation}
\label{taylor hp}
\mathcal{H}^2(\Sigma)\le\mathcal{H}^2(\varphi(\Sigma))+C\,r^\alpha
\end{equation}
whenever $\{\varphi\ne{\rm id}\}\subset B_r(x)\subset\subset U$, $x\in\Sigma$ and ${\rm Lip}\,\varphi<\infty$, then, in our terminology, $\Sigma$ is a $\mathcal{K}$-surface without boundary in $U$ where $\mathcal{K}$ is as in \eqref{K plateau}.  Moreover, when $C=0$, $\Sigma$ is a minimal Plateau surface.

The significance of Taylor's theorem is that it explains the (interior) singularities observed by Plateau solely in terms of the geometric calculus of variations. Various ``non-distributional approaches to Plateau's problem'' have been proposed to show the existence of compact sets $\Sigma$ satisfying \eqref{taylor hp} with $C=0$ and with $U$ given by the complement of a compact ``wire frame'': these include, at least, Reifenberg's approach of homological spanning conditions, the Harrison-Pugh approach of homotopic spanning conditions, and David's notion of sliding minimizers; see \cite{reifenberg1,reifenberg2,reifenberg3,davidshouldwe,fangAPISA,FangKola,harrisonpughACV,harrisonpughGENMETH,DLGM,delederosaghira} and other related papers.  These approaches provide rigorous constructions of many minimal Plateau surfaces -- though care has to be taken at the boundary; see below.

\subsection{Rigidity theorems in more general non-smooth settings}\label{section nonsmooth rigidity}  Plateau surfaces provide an interesting ``semi-classical'' setting for extending the theory of minimal surfaces. The same goal could however be pursued in even more general settings -- specifically those provided by geometric measure theory (GMT).  There are two major motivations for this.

\medskip

First of all, the two-dimensional area minimizing surfaces in $\R^3$ constructed by the non-distributional approaches to Plateau's problem mentioned above (e.g., \cite{reifenberg1,harrisonpughACV,DLGM,davidshouldwe}), as well as those found in distributional approaches (e.g., flat chains modulo 3 \cite{taylor73}), may possess boundary singularities. This is not a purely theoretical issue as boundary singularities are also observed in physical soap films. Thus, it is natural to consider a more general notion of Plateau surface where boundary behavior is not modeled only by the half-plane $H$, but by more general cones.  In particular, Plateau surfaces as introduced here should be properly understood as ``Plateau surfaces with regular boundary''.   A list of possible boundary singularities is described in \cite[Section 5.2 and Figure 5.3]{lawlormorgan96}, although not all the examples in that list are likely to be locally area minimizing (i.e., physical), and so it is unclear what the correct modification of the definition adopted in this paper should be. By working in the language of GMT one sidesteps this difficulty by working in a class large enough to encompass all possible boundary singularities.

\medskip

Secondly,  GMT provides powerful compactness theorems which, in turn, allow one to turn rigidity theorems like Theorem \ref{MainThm} into interesting perturbative results. For instance, in the case of the volume-preserving mean curvature flow, a characterization of equilibrium states requires the generalization of the classical Alexandrov's theorem (smooth boundaries with constant mean curvature enclosing finite volumes are spheres \cite{alexandrov}) to the class of sets of finite perimeter and finite volume with constant distributional mean curvature; see \cite{delgadinomaggiAPDE}.  In a similar vein, Theorem \ref{MainThm} could be used to understand the long time behavior of (singular) mean curvature flows with fixed boundary given by two parallel convex curves; see \cite{StuvardTonegawa}.

\medskip

With these motivations in mind, in the follow-up paper \cite{VarifoldPaper} we extend the reach of our rigidity theorems from minimal Plateau surfaces to an appropriate class of stationary varifolds.

\subsection{Organization of the paper} In Section \ref{section moving planes} we present the key technical statement of the paper, Theorem \ref{CylThm}. Sections \ref{section rigidity slab} and \ref{section global rigidity} contain, respectively, the proofs of Theorem \ref{MainThm} and Theorem \ref{GlobalThm}, while in Section \ref{Questions} we collect some open questions.

\section{Moving planes for minimal Plateau surfaces in  a cylinder}\label{section moving planes} In Section \ref{section rs and uc} we prove a removable singularity result and a unique continuation principle for minimal Plateau surfaces and record a  simple observation about the infinitesimal structure of cellular surfaces. In Section \ref{ReflectSec} we provide conditions so an infinitesimal reflection symmetry in a minimal Plateau surface propagates to a global symmetry.  Finally,  in Section \ref{section mp} we present the main moving planes argument.

\medskip

For future extensions to varifolds -- see \cite{VarifoldPaper} -- the results of this section will be proved for a more general class of surfaces than minimal Plateau surfaces.  Specifically, we consider a closed set, $\Sigma$, that is a minimal Plateau surface away from a discrete set, $Q$, of potentially exotic singularities.  We show that, under certain natural conditions on these singularities, neither they nor $T$-points occur in the region in which the moving planes method applies -- i.e., $\Sigma$ is a $Y$-surface in this region. More precisely, we require that, at the points of $Q$, $\Sigma$ has upper density strictly less than $2$. Here the {\bf upper density of $\Sigma$ at $p$} is defined to be
$$
\bar{\Theta}(\Sigma, p)=\limsup_{r\to 0^+} \frac{\mathcal{H}^2(\Sigma\cap B_r(p))}{\pi r^2}\,,
$$
When the usual limit exists, we denote it by $\Theta(\Sigma,p)$ and call it the {\bf density of $\Sigma$ at $p$}. If $\Sigma$ is a minimal Plateau surface in a neighborhood of $p$, then $\bar{\Theta}(\Sigma, p)<2$ as,
\begin{equation}
\label{density for minimal plat surf}
\Theta(\Sigma, p)=\left\{
\begin{split}
&1/2\,,&\quad\mbox{if $p\in \partial \Sigma$}\,,
\\
&1\,,&\quad\mbox{if $p\in \mathrm{int}(\Sigma)$}\,,
\\
&3/2\,,&\quad\mbox{if $p\in \Sigma_Y$}\,,
\\
&\frac{6}{2\pi} \arccos\Big(-1/3 \Big) \approx 1.82\,,&\quad\mbox{if $p\in \Sigma_T$}.
\end{split}
\right .
\end{equation}

\medskip

\subsection{Removable singularities and unique continuation}\label{section rs and uc}
We first prove a removable singularities result for minimal Plateau surfaces, see Lemma \ref{RemoveSingLem}. The starting point is the observation that, if $\Sigma$ is a minimal Plateau surface in $U$, then, for any $X\in C^1_c(U;\R^3)$,
\begin{equation}
\label{minimal PS}
\int_{\reg(\Sigma)}\,\Div^\Sigma X\,d\Ha^2=\int_{\pa\Sigma}X\cdot\,\nu^{{\rm co}}_{\Sigma}\,d\Ha^1\,.
\end{equation}
In particular, the multiplicity one rectifiable varifold $V_\Sigma$ defined by $\Sigma$ is stationary in $U\setminus \partial \Sigma$.

\medskip

To prove \eqref{minimal PS}, we notice that if $S$ is a connected component of ${\rm int}(\Sigma)$, then $\overline{S}$ is a surface with boundary in the open set $U\setminus\Sigma_T$, with ${\rm int}(\overline{S})=S$ and $\partial \overline{S}=[\overline{S}\cap\Sigma_Y]\cup[\overline{S}\cap\partial \Sigma]$. Moreover, at each $p\in \overline{S}\cap\Sigma_T$, $\overline{S}$ is locally diffeomorphic to a planar angular sector (isometric to one of the six angular sectors forming $T$), so that the classical proof of the tangential divergence theorem can be easily adapted to $\overline{S}$. We thus have that for every $X\in C^1_c(U;\R^3)$
\[
\int_{S}\,\Div^{S} X\,d\Ha^2=\int_{\partial \overline{S}}X\cdot\,\nu^{{\rm co}}_{\overline{S}}\,d\Ha^1
=\int_{\Sigma_Y\cap\partial \overline{S}}X\cdot\,\nu^{{\rm co}}_{\overline{S}}\,d\Ha^1
+\int_{\partial\Sigma\cap\partial \overline{S}}X\cdot\,\nu^{{\rm co}}_{\overline{S}}\,d\Ha^1\,.
\]
Since ${\rm int}(\Sigma)$ has locally in $U$ finitely many connected components, the family $\mathcal{S}(X)$ of those components $S$  of ${\rm int}(\Sigma)$ such that $S\cap\spt X\ne\emptyset$ is finite. We thus find
\begin{eqnarray*}
&&\sum_{S\in\mathcal{S}(X)}\int_{S}\,\Div^{S} X\,d\Ha^2= \int_{{\rm int}(\Sigma)}\,\Div^\Sigma X\,d\Ha^2=\int_{\reg(\Sigma)}\,\Div^\Sigma X\,d\Ha^2\,,
\\
&&\sum_{S\in\mathcal{S}(X)} \int_{\partial\Sigma\cap\partial \overline{S}}X\cdot\,\nu^{{\rm co}}_{\overline{S}}\,d\Ha^1=\int_{\pa\Sigma}X\cdot\,\nu^{{\rm co}}_{\Sigma}\,d\Ha^1\,,
\end{eqnarray*}
where in the first identity we have used that $\Div^SX=\Div^\Sigma X$ on each $S$, while in the second identity we have used the observation that for each $p\in\partial\Sigma$ there exists exactly one $S\in\mathcal{S}(X)$ such that $p\in\overline{S}\cap\partial\Sigma$ and $\nu^{{\rm co}}_{\overline{S}}(p)=\nu^{{\rm co}}_\Sigma(p)$. We are left to show that
\[
\sum_{S\in\mathcal{S}(X)}\int_{\Sigma_Y\cap \overline{S}}X\cdot\,\nu^{{\rm co}}_{\overline{S}}\,d\Ha^1=0\,.
\]
The reason is that for each $p\in\spt(X)\cap\Sigma_Y$ there are $r>0$ and three distinct $S_1, S_2,S_3\in\mathcal{S}(X)$ such that $\Sigma\cap B_r(p)=(S_1\cup S_2\cup S_3)\cap B_r(p)\subset\subset U$, $p\in S_1\cap S_2\cap S_3$ and
\[
\nu_{\overline{S}_1}^{{\rm co}}(p)+\nu_{\overline{S}_2}^{{\rm co}}(p)+\nu_{\overline{S}_3}^{{\rm co}}(p)=0\,,
\]
since $\Sigma\cap B_r(p)$ is diffeomorphic to $Y$ through a map whose differential is an isometry at $p$. In particular $\sum_{S\in\mathcal{S}(X)}\nu^{{\rm co}}_{\overline{S}}=0$ on $\Sigma_Y\cap\spt X$, and therefore
\[
\sum_{S\in\mathcal{S}(X)}\int_{\Sigma_Y\cap \overline{S}}X\cdot\,\nu^{{\rm co}}_{\overline{S}}\,d\Ha^1=
\int_{\Sigma_Y\cap\spt X}X\cdot\,\Big(\sum_{S\in\mathcal{S}(X)}\nu^{{\rm co}}_{\overline{S}}\Big)\,d\Ha^1=0\,.
\]
This proves \eqref{minimal PS}.

\begin{lem}[Removable singularities for minimal Plateau surfaces]\label{RemoveSingLem}
	Let $\Sigma$ be a closed subset of $B_R=B_R(0)$ without isolated points so that  $\Sigma\backslash \set{0}$ is a minimal Plateau surface without boundary in $B_{R}\backslash \set{0}$.  If $\bar{\Theta}(\Sigma, 0)<2$ and $\Sigma\cap \set{x_3\geq 0}$ is a graph of locally bounded slope, then $\Sigma$ is a minimal Plateau surface in $B_R$. If $0\in \Sigma$, then $0\in \mathrm{int}(\Sigma)\cup \Sigma_Y$; and if $0\in\Sigma_Y$, then the spine of $T_0 \Sigma$ lies on $\set{x_3=0}$.
\end{lem}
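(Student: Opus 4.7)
The plan is to extend varifold stationarity across the origin, analyze the tangent cone at $0$, use the graph condition to pin it down, and then conclude by a standard regularity theorem.

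\emph{Stage 1 (extension of stationarity).}  Since $\partial\Sigma=\emptyset$ on the punctured ball, identity \eqref{minimal PS} shows that $V_\Sigma$ is stationary in $B_R\setminus\{0\}$.  For any $X\in C^1_c(B_R;\R^3)$, apply \eqref{minimal PS} to $\chi_\epsilon X$, where $\chi_\epsilon$ is a radial cutoff vanishing on $B_\epsilon(0)$, equal to $1$ off $B_{2\epsilon}(0)$, with gradient of order $1/\epsilon$.  The error term $\int_\Sigma X\cdot\nabla^\Sigma\chi_\epsilon\,d\Ha^2$ is controlled, up to $\|X\|_\infty$, by $\Ha^2(\Sigma\cap B_{2\epsilon}(0))/\epsilon$; combining monotonicity on the punctured ball with $\bar\Theta(\Sigma,0)<2$ yields $\Ha^2(\Sigma\cap B_r(0))=O(r^2)$, so the error tends to $0$ as $\epsilon\to 0$ and $V_\Sigma$ is stationary in $B_R$.

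\emph{Stage 2 (tangent cone).}  Monotonicity produces a stationary integral tangent cone $C$ at $0$ with $\Theta(C,0)=\Theta(\Sigma,0)<2$.  Because each rescaling $(\Sigma-0)/\rho_i$ is a minimal Plateau surface away from the origin, the varifold limit $C$ is a minimal Plateau cone in $\R^3\setminus\{0\}$.  The classification of stationary integral $2$-cones in $\R^3$ whose non-apex tangent cones lie in $\{P,Y,T\}$, combined with $\Theta(C,0)<2$, forces $C\in\{P,Y,T\}$.

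\emph{Stage 3 (the graph condition).}  Being a graph of locally bounded slope is scale-invariant, so $C\cap\{x_3\ge 0\}$ inherits this property.  This rules out $C=T$: in any orientation, the tetrahedral cone either contains a triangular face lying in a plane through $e_3$ (violating bounded slope) or has two upper-half faces whose projections to $\R^2$ overlap (violating injectivity).  It forces $C=P$ to be a plane transverse to $e_3$.  And it forces $C=Y$ to have horizontal spine contained in $\{x_3=0\}$: writing the spine direction as $\mathbf v$ with vertical component $\sin\phi$, for $0<\phi<\pi/2$ a direct computation shows that at least two of the three $120^\circ$-spaced half-plane sheets project, over $\{x_3\ge 0\}$, onto overlapping half-planes in $\R^2$ (the three direction vectors in the normal plane of $\mathbf v$ sum to zero, forcing a sign-coincidence after projection); the case $\phi=\pi/2$ makes each sheet vertical, again violating bounded slope.

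\emph{Stage 4 (regularity).}  Allard's regularity theorem applied with a non-vertical planar tangent cone yields $C^{1,\alpha}$-smoothness of $\Sigma$ near $0$, so $0\in\mathrm{int}(\Sigma)$.  In the $Y$-tangent case, $\Theta(\Sigma,0)=3/2$; together with the $\mathcal K$-structure of $\Sigma\setminus\{0\}$ and the bounded-slope graph representation of the upper half near $0$, this permits an application of Simon's regularity theorem for $Y$-tangent stationary integral varifolds (or a direct $C^{1,\alpha}$-approximation argument), which extends the Plateau chart to $0$ with $\hat T_0\Sigma=Y$ and spine in $\{x_3=0\}$.  I expect the main obstacle to be Stage 2, since in the stationary (non-area-minimizing) setting one cannot directly invoke Taylor's theorem; the classification must leverage the Plateau structure of $\Sigma$ already known away from $0$.
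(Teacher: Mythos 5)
Your Stages 1 and 4 track the paper (cutoff argument for stationarity; Allard in the planar case and Simon's theorem \cite{SimonCylindrical} in the $Y$-case), but Stage 2 contains the real gap, and you have in effect flagged it yourself. First, the assertion that the varifold blow-up limit $C$ is ``a minimal Plateau cone in $\R^3\setminus\{0\}$'' does not follow from the rescalings being Plateau surfaces away from the origin: varifold convergence does not preserve the local $C^{1,\alpha}$-diffeomorphism-to-model structure (curvature can blow up, $Y$-curves can collapse, sheets can merge), so the limit a priori is only a stationary integral cone; the density bound $\bar\Theta(\Sigma,0)<2$ gives multiplicity one, but nothing more about local structure off the vertex. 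Second, even granting that, the ``classification of stationary integral $2$-cones in $\R^3$ whose non-apex tangent cones lie in $\{P,Y,T\}$'' with density below $2$ is not a quotable result in the stationary (non-minimizing) setting -- this classification is precisely the content that has to be proved, and it is where the hypothesis that $\Sigma\cap\{x_3\ge 0\}$ is a graph of locally bounded slope must enter \emph{before}, not after, the classification.

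The paper's route supplies exactly what is missing. The graph hypothesis makes $\Sigma\cap\{x_3>0\}$ a stable minimal surface, so the Fischer-Colbrie--Schoen curvature estimate \cite{FCS} gives $|A_\Sigma|\le C/x_3$; this forces the rescalings to converge smoothly in $\{x_3>0\}$, so the upper half of the cone is a finite union of half-planes hinged on a single horizontal line $\ell\subset\{x_3=0\}$. For the lower half no Plateau structure is claimed: one only uses Allard--Almgren \cite{allardalmgren76} to see that the link $\Gamma=K\cap\partial B_1$ is a geodesic net, with junction multiplicities $m(p)\in\{2,3\}$ forced by the density bound, and then a combinatorial count at the endpoints $\pm p_0$ of $\ell$ together with the rigidity lemma for geodesic nets (Lemma \ref{lemma geodesic nets}, proved by moving equatorial half-circles) pins $\Gamma$ down as an equator or a $Y$-net -- in particular ruling out $T$ and fixing the spine in $\{x_3=0\}$ without the ad hoc projection arguments of your Stage 3. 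Without an argument of this type (or some substitute for the classification you assert), your Stage 2 does not go through, and the proof is incomplete.
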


\begin{proof} Since $\Sigma$ is closed in $B_R$, if $0\not\in\Sigma$, then $B_r\cap\Sigma=\emptyset$ for some $r>0$, and thus the fact that $\Sigma$ is a minimal Plateau surface in $B_R\setminus\set{0}$ implies that $\Sigma$ is a minimal Plateau surface in $B_R$. We can thus assume that $0\in\Sigma$.
	
\medskip
	
\noindent {\it Step one}: We first prove that
\begin{equation}
	\label{sigma is stationary}
	\int_{\reg(\Sigma)}\Div^\Sigma X=0\qquad\forall X\in C^\infty_c(B_R;\R^3)\,,
\end{equation}
that is, the rectifiable varifold $V_\Sigma$ defined by $\Sigma$ is stationary in $B_R$. As \eqref{minimal PS} holds for $\Sigma$ in $B_{R}\backslash \set{0}$, we have $\int_{\reg(\Sigma)}\Div^\Sigma Y d\mathcal{H}^2=0$ for every $Y\in C^\infty_c(B_{R}\backslash \set{0};\R^3)$. Setting $Y=\eta_\varepsilon\,X$ for $X\in C^\infty_c(B_R;\R^3)$ and $\eta_\varepsilon$ a smooth cutoff with $\eta_\varepsilon=1$ on $\R^3\setminus B_\varepsilon$ and $\eta_\varepsilon=0$ on $B_{\varepsilon/2}$, we thus find
\[
\int_{\reg(\Sigma)}\,\eta_\varepsilon\,\Div^\Sigma X=-\int_{\reg(\Sigma)} X\cdot\nabla\eta_\varepsilon\,.
\]
Choosing $\eta_\varepsilon$ so that $\eta_\varepsilon\to 1$ on $\R^3\setminus\set{0}$ and  $|\nabla\eta_\varepsilon|\le 1_{B_\varepsilon\setminus B_{\varepsilon/2}}\,C/\varepsilon$, we have
\[
\Big|\int_{\reg(\Sigma)}\Div^\Sigma X\Big|\le C\,\|X\|_{C^0}\,\limsup_{\varepsilon\to 0^+}\frac{\mathcal{H}^2(\Sigma\cap B_\varepsilon)}{\varepsilon}=0\,,
\]
where we have used $\bar\Theta(\Sigma,0)<\infty$ to deduce $\mathcal{H}^2(\Sigma\cap B_\varepsilon)={\rm o}(\varepsilon)$ as $\varepsilon\to 0^+$. We have thus proved that \eqref{sigma is stationary} holds.

\medskip

\noindent {\it Step two}: We show that $\Theta(\Sigma,0)$ exists and belongs to $[1,2)$. By \eqref{sigma is stationary} and the monotonicity formula for stationary varifolds \cite[Section 17]{SimonLN}, $\Theta(\Sigma,p)$ exists at every $p\in B_R$ and defines an upper semicontinuous function on $B_R$. As $\Sigma$ contains no isolated points and $0\in \Sigma$, there are $p_j\to 0$ as $j\to\infty$ with $p_j\in\Sigma\setminus\{0\}$. By upper semicontinuity of $\Theta(\Sigma,\cdot)$ in $B_R$ we have
\[
\Theta(\Sigma,0)\ge\limsup_{j\to\infty}\Theta(\Sigma,p_j)\ge 1\,,
\]
where we have used \eqref{density for minimal plat surf} and the assumption that $\Sigma$ has no boundary points in $B_R\setminus\{0\}$ to obtain $\Theta(\Sigma,p_j)\ge 1$ for every $j$. Hence, $1\le\Theta(\Sigma,0)\leq \bar{\Theta}(\Sigma,0)<2$.

\medskip

\noindent {\it Step three}: We show that every varifold blow-up limit $\mathcal{C}$ of $V_\Sigma$ at $0$ has multiplicity one and satisfies
\begin{equation}
  \label{upper part of C is halfplanes}
  \mathcal{C}=V_K\,,\qquad
  \mathcal{C}\llcorner \set{x_3\geq 0}=\sum_{i=1}^N V_{H_i}
\end{equation}
where $K=\spt\mathcal{C}$ is a cone with vertex at $0$, $H_i\subset \set{x_3\geq 0}$ are half-planes with $\partial H_i=\ell\subset \set{x_3=0}$ for $1\leq i\leq N$, $\ell$ is a line in $\set{x_3=0}$, and where $V_K$ and $V_{H_i}$ are the multiplicity one varifolds associated to $K$ and $H_i$ respectively. Indeed,
given a sequence of radii $\rho_i\to 0^+$, up to extracting a subsequence, the multiplicity one varifolds $V_{\Sigma/\rho_i}$ have a varifold limit $\mathcal{C}$ which is an integer stationary varifold in $\R^3$ supported on a cone $K$ with vertex at $0$. If $\theta$ denotes the multiplicity of $\mathcal{C}$ and $q$ is a Lebesgue point of $\theta$ with $\theta\ge2$, then $2\le\Theta(\mathcal{C},q)=\Theta(\mathcal{C},t\,q)$ for every $t>0$, thus leading to $\Theta(\mathcal{C},0)\ge2$ by upper semicontinuity of $\Theta(\mathcal{C},\cdot)$ on $\R^3$. Since $\Theta(\mathcal{C},0)=\Theta(\Sigma,0)\in[1,2)$, we deduce that $\theta=1$ $\|\mathcal{C}\|$-a.e., and thus that $\mathcal{C}=V_K$. Concerning the second identity in \eqref{upper part of C is halfplanes}, we notice that since $\Sigma\cap \set{x_3\geq 0}$ is a graph of locally bounded slope and $\Sigma$ is a minimal Plateau surface without boundary in $B_R\setminus\{0\}$, it follows that $\Sigma \cap \set{x_3>0}$ is a smooth, stable minimal surface in $B_R\cap\{x_3>0\}$. (Notice that it is possible for $\Sigma\cap\set{x_3>0}$ to be empty!) Hence, for $q\in\Sigma\cap\{x_3>0\}\cap B_{R/2}$,  $\Sigma$ is a stable minimal surface in $B_{x_3(q)}(q)$, and thus, by the curvature estimates of Fischer-Colbrie and Schoen \cite{FCS},
\begin{equation}\label{CurvEstEqn}
|A_{\Sigma}(q)|\leq \frac{C}{x_3(q)}\,,\qquad \forall q\in {\Sigma\cap B_{R/2} \cap \set{x_3>0}}
\end{equation}
where $C>0$ is a universal constant. Since \eqref{CurvEstEqn} implies
\[
|A_{\Sigma/\rho_i}(q)|\leq \frac{C}{x_3(q)}\,,\qquad \forall q\in {\Sigma/\rho_i\cap B_{R/2\rho_i} \cap \set{x_3>0}}
\]
we deduce that $K\cap\{x_3>0\}$ is a smooth minimal surface. Since $K\cap\{x_3>0\}$ is a cone with respect to $0$ we deduce that $K\cap\{x_3>0\}$ is a finite union of half-spaces bounded by a same line $\ell$ contained in $\{x_3=0\}$. This also implies that $K\cap\{x_3=0\}$ is either equal to $\ell$, or to $\{x_3=0\}$, or to one of the two half-spaces bounded by $\ell$ in $\{x_3=0\}$. Since $\theta=1$ $\|\mathcal{C}\|$-a.e. on $K$ we complete the proof of \eqref{upper part of C is halfplanes}.

\medskip

\noindent {\it Step four}: We complete the proof. Since $\mathcal{C}=V_K$ is a stationary multiplicity one conical varifold, $K\cap\pa B_1$ induces a one-dimensional, multiplicity one stationary varifold on $\pa B_1$. A result of Allard and Almgren \cite{allardalmgren76} implies that, for every $p\in K\cap\pa B_1$, there is $r>0$ such that $B_r(p)\cap K\cap\pa B_1$ is a finite union of geodesic arcs originating from $p$ along directions $\{v_j(p)\}_{j=1}^{m(p)}\subset T_p(\pa B_1)$ such that $\sum_{j=1}^{m(p)}v_j(p)=0$. Thus, in the terminology of Appendix \ref{appendix geonets}, we find that
\begin{eqnarray}\label{gamma is a geodesic net}
  &&\mbox{$\Gamma=K\cap\pa B_1$ is a geodesic net in $\pa B_1$,}
  \\\nonumber
  &&\mbox{with $m(p)\in\{2,3\}$ for every $p\in\Gamma$}\,.
\end{eqnarray}
(Here $m(p)=\#\,I(p)$ for $I(p)$ as in the appendix). Indeed,  $m(p)\in\{2,3\}$ follows immediately from the upper semicontinuity of $\Theta(\mathcal{C},\cdot)$, from $\Theta(\mathcal{C},0)\in[1,2)$, and
\[
\Theta(\mathcal{C},t\,p)=\Theta(\mathcal{C},p)\ge\frac{m(p)}2\,,\qquad\forall t>0\,,\forall p\in\Gamma\,.
\]
If now $K\cap\{x_3<0\}=\emptyset$, then $0\in\Sigma$ implies $K=\{x_3=0\}$ by a standard first variation argument (see, e.g., \cite[Lemma 3]{delgadinomaggiAPDE}), and thus $\Sigma$ is a smooth minimal surface in a neighborhood of $0$ by Allard's regularity theorem \cite{Allard}. We can therefore assume that $K\cap\{x_3<0\}\ne\emptyset$, and thus, since $K$ is a cone, that
\[
\exists\, p_1\in\Gamma\cap\{x_3<0\}\,.
\]
We claim that the existence of $p_1$ ensures that, if $\{p_0,-p_0\}=\ell\cap\pa B_1\subset\Gamma$, then
\begin{eqnarray}\label{p0 has vector}
  \mbox{$v_j(p_0)\cdot e_3>0$ for some $j=1,...,m(p_0)$}\,.
\end{eqnarray}
Indeed, $\Gamma\cap\{x_3\ge0\}$ consists of equatorial half-circles with $\pm p_0$ as end-points; at the same time $\Gamma$ is connected (as a consequence of the stationarity of $\mathcal{C}=V_K$); therefore the only way to connect a point in $\Gamma\cap\{x_3<0\}$ to, say, $p_0$ is the existence of a geodesic arc whose interior is entirely contained in $\{x_3<0\}$ and having one endpoint at either $p_0$ or $-p_0$. Therefore up to exchange the roles of $p_0$ and $-p_0$ we can assert \eqref{p0 has vector}. (Notice that \eqref{p0 has vector} must hold at both endpoints of $\ell\cap\pa B_1$, but we shall not need this fact). By \eqref{upper part of C is halfplanes} and \eqref{p0 has vector} we deduce that
\begin{equation}
  \label{necessarily}
  m(p_0)=N+\#\Big\{j:v_j(p_0)\cdot e_3>0\Big\}>N\,.
\end{equation}
A first consequence of \eqref{necessarily} is that $N\ge 3$ would imply $m(p_0)\ge 4$, contradicting \eqref{gamma is a geodesic net}: hence $N\in\{1,2\}$. If $N=1$ and $m(p_0)=m(-p_0)=2$, then $\Gamma$ is a geodesic net which agrees with an equatorial circle on $\{x_3>-\varepsilon\}$ for some $\varepsilon>0$: therefore $\Gamma$ is an equatorial circle by Lemma \ref{lemma geodesic nets} in Appendix \ref{appendix geonets}, $K$ is a plane, and $\Sigma$ is a smooth minimal surface near $0$. If $N=1$ and $m(p_0)=m(-p_0)=3$, then $\Gamma$ is a geodesic net which agrees with a $Y$-net on $\{x_3>-\varepsilon\}$ for some $\varepsilon>0$: therefore $\Gamma$ is a $Y$-net by Lemma \ref{lemma geodesic nets}, $\Theta(\mathcal{C},0)=3/2$ and by \cite[Corollary 3 in Section 1, Remark 2 in Section 7]{SimonCylindrical} $\Sigma$ is a  $Y$-surface in a neighborhood of $0$. If $N=2$, then, again by \eqref{necessarily},  $m(p_0)=m(-p_0)=1$ and so  $\Gamma$ agrees with a $Y$-net on $\{x_3>-\varepsilon\}$ for some $\varepsilon>0$, and we conclude as before. The remaining situation is $N=1$, $m(p_0)=3$ and $m(-p_0)=2$. In this case, by slightly tilting the vector $e_3$ into a new unit vector $e_3^*$, see
\begin{figure}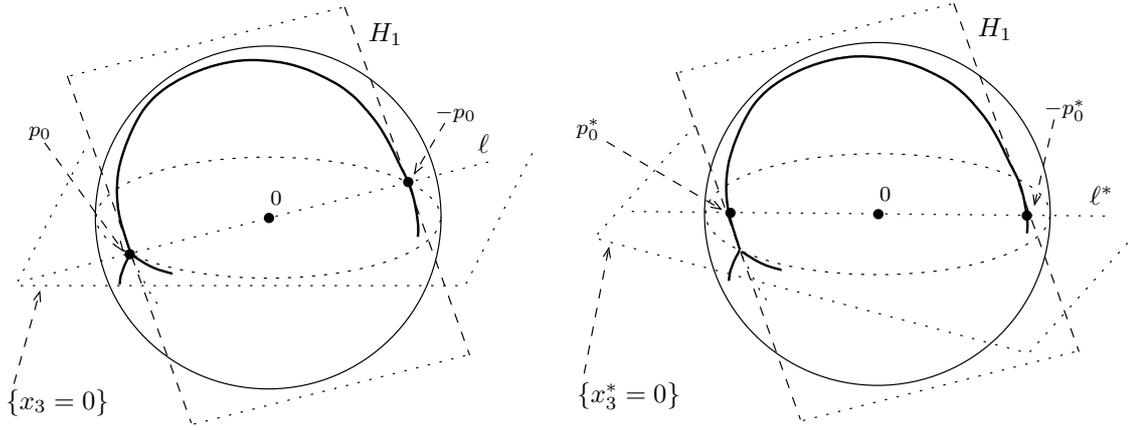\caption{\small{Step three of the proof of Lemma \ref{RemoveSingLem}, the case when $N=1$ (i.e., $K\cap\{x_3\ge0\}=H_1$ with $\ell=H_1\cap\{x_3=0\}$), $m(p_0)=3$ and $m(-p_0)=2$, where $p_0$ and $-p_0$ are the endpoints of $\ell\cap B_1$. The fact that $m(-p_0)=2$ implies that $K\cap\pa B_1$ coincides with $H_1\cap\pa B_1$ near $-p_0$, while $m(p_0)=3$ means $p_0$ is a $Y$-point and one of three arcs emanating from $p_0$ is given by $H\cap\pa B_1\cap\{x_3\ge0\}$. In this situation we tilt $e_3$ into a new vector $e_3^*$ and shrink $\varepsilon>0$ so  $\Gamma\cap\{x_3^*>-\varepsilon\}=H_1\cap\{x_3^*>-\varepsilon\}$. Clearly, if $\ell^*=H_1\cap\{x_3^*=0\}$ and $\pm p_0^*$ are the endpoints of $\ell^*\cap\pa B_1$, then $m(\pm p_0^*)=2$.}}\label{fig p0}\end{figure}
Figure \ref{fig p0}, we find that, for some $\varepsilon>0$, $\Gamma\cap\{x_3^*>-\varepsilon\}$ is a great circle: hence $\Gamma$ must be an equatorial circle by Lemma \ref{lemma geodesic nets}, contradicting $m(p_0)=3$.
\end{proof}

We next prove a kind of unique continuation result for minimal Plateau surfaces lying on one side of a regular minimal surface.  This is slightly subtle as the usual unique continuation principle does not directly hold for minimal Plateau surfaces.

\begin{lem}[Unique continuation for minimal Plateau surfaces]\label{UniqueContLem}
	Let $U\subset\R^3$ be open, $Q=\set{q_1, \ldots, q_N}$ be a finite set of points in $U$, $\Sigma_1$ be a connected, (relatively) closed set in $U$ and assume that $\Sigma_1\backslash Q$ is a minimal Plateau surface without boundary in $U\backslash Q$ with $\bar{\Theta}(\Sigma_1, q)<2$ for each $q\in Q$.
	Suppose there is an open subset $V\subset U$ so that $\Sigma_2\subset U\cap\partial V$ is a regular minimal surface without boundary. If $V\cap \Sigma_1=\emptyset$ and there is a point $p_0\in \Sigma_1\cap \Sigma_2$, then $\Sigma_1\subset \Sigma_2$. If $\Sigma_2$ is connected, then $\Sigma_1=\Sigma_2$.
\end{lem}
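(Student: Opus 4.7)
The plan is to show that
\[
A = \{p \in \Sigma_1 : \Sigma_1 \cap B_\rho(p) = \Sigma_2 \cap B_\rho(p) \text{ for some } \rho > 0\}
\]
is non-empty, relatively open, and relatively closed in $\Sigma_1$; connectedness of $\Sigma_1$ will then force $A = \Sigma_1$, whence $\Sigma_1 \subset \Sigma_2$. Openness is automatic. The key step is the following \emph{claim}: if $p \in \Sigma_1 \cap \Sigma_2$, then $\Sigma_1 = \Sigma_2$ in a neighborhood of $p$. This gives $p_0 \in A$, and it also gives closedness: for $p_n \in A$ with $p_n \to p \in \Sigma_1$, we have $p_n \in \Sigma_2$, and since $\Sigma_2$ is relatively closed in $U$, also $p \in \Sigma_2$, so the claim applies.

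To prove the claim, choose orthonormal coordinates at $p$ in which $\Sigma_2$ is locally the graph of a smooth function $f$ over $\{x_3 = 0\}$ with $f(0) = 0$, $\nabla f(0) = 0$, and $V$ is locally $\{x_3 < f\}$. The hypothesis $V \cap \Sigma_1 = \emptyset$ forces $\Sigma_1$ locally into $\{x_3 \ge f\}$, so every blowup cone of $\Sigma_1$ at $p$ is contained in $\{x_3 \ge 0\}$. I next argue that $T_p \Sigma_1 = \{x_3 = 0\}$. If $p \notin Q$, then $T_p \Sigma_1 \in \{P, Y, T\}$ (no boundary, since $\Sigma_1 \setminus Q$ is boundaryless). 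A $Y$-cone is ruled out because its three half-plane directions lie perpendicular to the common spine at $120^\circ$ to each other, so their $x_3$-components sum to zero and cannot all be nonnegative. A $T$-cone is ruled out because the four vertices of a regular tetrahedron centered at the origin sum to zero, hence cannot all have $x_3 \ge 0$ unless they are coplanar, contradicting non-coplanarity. If $p \in Q$, I invoke the proof of Lemma~\ref{RemoveSingLem}: Step one (using only $\bar{\Theta}(\Sigma_1, p) < 2$) gives that $V_{\Sigma_1}$ is stationary near $p$; Step three produces a multiplicity-one blowup cone $\mathcal{C} = V_K$; and the first-variation argument of Step four applied to $K \subset \{x_3 \ge 0\}$ forces $K = \{x_3 = 0\}$. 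In all cases, Allard's regularity theorem (or interior regularity of minimal Plateau surfaces) yields that $\Sigma_1$ is smooth and minimal in a neighborhood of $p$.

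With $\Sigma_1$ and $\Sigma_2$ both smooth minimal surfaces tangent to the same plane at $p$ and $\Sigma_1$ on one side of $\Sigma_2$, the classical Hopf strong maximum principle for the minimal surface equation, applied to the difference $g - f$ of the two smooth graphs, yields $\Sigma_1 = \Sigma_2$ in a neighborhood of $p$, proving the claim and hence $\Sigma_1 \subset \Sigma_2$. For the last assertion, if $\Sigma_2$ is connected then $\Sigma_1$, viewed as a subset of $\Sigma_2$, is both relatively closed (inherited from closedness in $U$) and relatively open (since $\Sigma_1 = \Sigma_2$ locally at each point of $\Sigma_1 = A$), forcing $\Sigma_1 = \Sigma_2$.

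The main obstacle is identifying the tangent cone at points of the exotic singular set $Q$, where none of the model cones $P, H, Y, T$ is available \emph{a priori}; this forces a detour through the stationarity plus first-variation machinery already developed in the proof of Lemma~\ref{RemoveSingLem}, rather than any purely local geometric enumeration.
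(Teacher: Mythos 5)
Your proposal is correct and follows essentially the same route as the paper: show that every contact point of $\Sigma_1$ with $\Sigma_2$ is a regular point of $\Sigma_1$ whose tangent plane agrees with $T_p\Sigma_2$ (using one-sidedness, the stationarity of $V_{\Sigma_1}$, the density bound at $Q$, and Allard at the exotic points), then apply the strong maximum principle and an open--closed connectedness argument. The only variation is cosmetic: at points of $\sing(\Sigma_1\setminus Q)$ you rule out $Y$- and $T$-cones by the elementary observation that neither can lie in a closed half-space through its vertex, whereas the paper treats all contact points uniformly via the tangent-cone/Allard argument (note also that your phrase ``cannot all be nonnegative'' should read ``can all be nonnegative only if all vanish, which is impossible''), and your step identifying $V$ locally with one full side of $\Sigma_2$ is exactly the same implicit use of $T_qV=H$ made in the paper's proof.
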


\begin{proof} By throwing out points of $Q$ if needed, we may assume $\Sigma_1$ is not a regular minimal surface in a neighborhood of any point of $Q$. Set $\Gamma=\Sigma_1\cap \Sigma_2$.  We claim
\begin{equation}
\label{uc1}
\left( Q\cup  \sing(\Sigma_1\backslash Q)\right)\cap \Gamma= \emptyset.
\end{equation}
Indeed, as in step one of the proof of Lemma \ref{RemoveSingLem}, the multiplicity one varifold $V_{\Sigma_1}$ defined by $\Sigma_1$ is stationary in $U$. If $q\in \Gamma$, then as $\Sigma_2$ is smooth and $\Sigma_2\subset\partial V$, there is an open half-space $H$ so $T_q\Sigma_2=P=\partial H=T_q V$.  As $\Sigma_1\cap V=\emptyset$, any tangent cone, $\mathcal{C}$,  to $V_{\Sigma_1}$ at $q$ has support disjoint from $H$ and, because $\Sigma_1$ is connected,  $\Theta(\Sigma, q)\geq 1$ and so $\mathcal{C}$ is non-trivial. As $\mathcal{C}$ is a stationary integer multiplicity cone with density strictly less than $2$, this implies that $\mathcal{C}=V_P$,  Hence, by Allard's theorem \cite{Allard}, $q$ is a regular point of $V_{\Sigma_1}$, and so $q\not \in Q\cup \sing(\Sigma_1\backslash Q)$.

In particular, by \eqref{uc1}, $\Sigma_2\cap\mathrm{int}(\Sigma_1\backslash Q)=\Gamma$.  Hence, for any $q\in \Gamma$ there is an $r>0$ so $\Sigma_1'=B_r(q)\cap \Sigma_1$ and $\Sigma_2'=B_{r}(q)\cap \Sigma_2$ are connected regular minimal surfaces with $\Sigma_1'$ lying on one side of $\Sigma_2'$ and $q\in \Sigma_1'\cap \Sigma_2'$.  The strong maximum principle immediately implies $\Sigma_1'=\Sigma_2'\subset \Gamma$, i.e.,  $\Gamma$ is an open subset of $\Sigma_1$.  As $\Gamma$ is also clearly a closed subset of $\Sigma_1$ and $p_0\in \Gamma$, the connectedness of $\Sigma_1$ implies  $\Sigma_1=\Gamma\subset \Sigma_2$.  Likewise, $\Sigma_1=\Gamma$ is an open and closed subset of $\Sigma_2$, proving the last claim.
	\end{proof}

Finally, we observe that there is an injective map from the cells of the tangent cone at a non-boundary point of a cellular minimal Plateau surface to its own cells.
\begin{lem}\label{CellularLem}
Let $U\subset \Real^3$ be open and $\Sigma\subset U$ be minimal Plateau surface without boundary in $U$ that is cellular in $U$.  For each $p\in \Sigma$, $T_p\Sigma$ is cellular in $\Real^3$.  Moreover, there is a well defined injective map
$$
\mathcal{I}_p : \mathcal{C}(T_p\Sigma)=\set{W^j}_{j=1}^M\to \mathcal{C}(\Sigma)=\{U^i\}_{i=1}^N
$$
defined by $\mathcal{I}_p(W^j)=U^{i_j}$ when and only when
$$W^j=\lim_{\rho\to 0} \rho^{-1}(U^{i_j}-p),$$
where the convergence occurs in $L^1_{{\rm loc}}(\R^3)$ for the corresponding indicator functions.
\end{lem}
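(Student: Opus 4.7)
First I would observe that, since $\Sigma$ has no boundary in $U$, every $p\in\Sigma$ lies in $\mathrm{int}(\Sigma)\cup\Sigma_Y\cup\Sigma_T$, and hence $T_p\Sigma$ is an orthogonal image of one of the model cones $P$, $Y$, or $T$. A direct inspection shows that each of these three cones is cellular in $\mathbb{R}^3$, with $M=2,3,4$ cells respectively, so $T_p\Sigma$ is cellular with $M$ cells $W^1,\dots,W^M$. This settles the first assertion.

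Next, I would invoke the local $C^{1,\alpha}$-diffeomorphism $\phi$ from the definition of $\mathcal{K}$-surface, which straightens $\Sigma$ near $p$ onto the cone $\hat{T}_p\Sigma$ and satisfies $\phi(p)=0$ and $D\phi_p\in O(3)$. For $\rho>0$ small enough, $\phi(B_\rho(p))$ is an open neighborhood of $0$ whose complement in $\hat{T}_p\Sigma$ has exactly $M$ connected components, one contained in each cell $\hat{W}^j=D\phi_p(W^j)$ of $\hat{T}_p\Sigma$. Pulling back by $\phi^{-1}$, the open set $B_\rho(p)\setminus\Sigma$ then has exactly $M$ connected components $V^1_\rho,\dots,V^M_\rho$, with $V^j_\rho=\phi^{-1}\bigl(\hat{W}^j\cap\phi(B_\rho(p))\bigr)$.

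The cell structure condition now applies: after possibly shrinking $\rho$, each $B_\rho(p)\cap U^i$ is connected (possibly empty), and the decomposition $B_\rho(p)\setminus\Sigma=\bigcup_i B_\rho(p)\cap U^i$ identifies the nonempty $B_\rho(p)\cap U^i$ with the connected components $V^j_\rho$. Thus for each $j$ there is a unique index $i_j\in\{1,\dots,N\}$ with $V^j_\rho=B_\rho(p)\cap U^{i_j}$, and distinct $j$'s yield distinct $i_j$'s. Setting $\mathcal{I}_p(W^j)=U^{i_j}$ then gives the desired injective map $\mathcal{C}(T_p\Sigma)\to\mathcal{C}(\Sigma)$.

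For the blow-up characterization, I would use the Taylor expansion $\phi(p+\rho q')=\rho\,D\phi_p(q')+o(\rho)$, uniform in $q'$ on any bounded set. Since $\hat{W}^j$ is an open cone, for $\rho$ small the condition $p+\rho q'\in U^{i_j}$ is equivalent to $D\phi_p(q')+o(1)\in\hat{W}^j$, and in the limit to $q'\in W^j$; hence the indicator functions of $\rho^{-1}(U^{i_j}-p)$ converge pointwise a.e.\ to the indicator of $W^j$, and dominated convergence upgrades this to $L^1_{\mathrm{loc}}(\mathbb{R}^3)$-convergence. The main (modest) obstacle is the bookkeeping between the ambient cells $U^i$, the local components $V^j_\rho$, and the model cells $W^j$; once the cell structure condition has been used to force connectedness of each $B_\rho(p)\cap U^i$, injectivity and well-definedness are immediate.
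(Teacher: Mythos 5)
Your plan is, in outline, the same as the paper's: you check by inspection that $P$, $Y$, $T$ are cellular, straighten $\Sigma$ near $p$ with the $C^{1,\alpha}$-diffeomorphism $\phi$, use the cell-structure radius to make each $B_{\rho}(p)\cap U^i$ connected, and obtain the $L^1_{\mathrm{loc}}$ blow-up identification from the first-order expansion of $\phi$ at $p$; the mechanism for well-definedness and injectivity is the paper's as well. The one step you assert that is not automatic is that at every sufficiently small radius $\rho$ the set $B_\rho(p)\setminus\Sigma$ has \emph{exactly} $M$ components, given by $V^j_\rho=\phi^{-1}\bigl(\hat{W}^j\cap\phi(B_\rho(p))\bigr)$. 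This is equivalent to the connectedness of $\phi(B_\rho(p))\cap\hat{W}^j$ for each $j$, and that is not free: the straightening property (that $\phi$ carries $\Sigma\cap B_r(p)$ onto the whole cone) is only available at the fixed radius $r$, which may well exceed the cell-structure radius, and for $\rho<r$ the set $\phi(B_\rho(p))$ is merely a $C^{1,\alpha}$-distorted neighborhood of $0$ whose intersection with a fixed open cone cell could a priori be disconnected. The claim is in fact true for small $\rho$ (since $D\phi_p\in O(3)$ and continuity of $D\phi$ make $\phi(B_\rho(p))$ radially graphical, hence star-shaped about $0$, while each cell of $P$, $Y$, $T$ is an open convex cone), but this deserves a short argument rather than an assertion.

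Alternatively you can bypass the issue entirely, which is what the paper does: use only that $\phi\bigl(U^{i}\cap B_{\rho}(p)\bigr)$ is a connected subset of $\mathbb{R}^3\setminus\hat{T}_p\Sigma$ (connected because the cell condition makes $U^i\cap B_\rho(p)$ connected), hence lies in a single cell $\hat{W}^j$. This immediately shows that a single ambient cell cannot contain pieces of two distinct cone cells near $p$ (injectivity), while the fact that $\phi^{-1}\bigl(\hat{W}^j\cap B_{c\rho}(0)\bigr)$ is connected and nonempty pins down, for each $j$, the unique cell $U^{i_j}$ whose blow-up at $p$ is $W^j$ (well-definedness), after which your expansion/dominated-convergence argument for the $L^1_{\mathrm{loc}}$ limit goes through unchanged. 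With that one point repaired, your proof is complete and essentially coincides with the paper's.
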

\begin{proof}
 By inspection, $P, Y$ and $T$ are cellular in $\Real^3$ with two,  three, and four cells respectively.  Hence, each $T_p\Sigma$ is cellular in $\Real^3$.
  By the definition of minimal Plateau surface, there exist an $r>0$ and a $C^{1,\alpha}$-diffeomorphism $\phi:B_r(p)\to \Real^3$ such that $\phi(p)=0$, $D\phi_p=I$, the identity map and $\phi(B_r(p)\cap\Sigma)= {T}_p\Sigma$.  In particular, there is a $0<r_1<r$ so for $0<r'<r_1$, $B_{\frac{1}{2}r'}(0)\subset \phi(B_{r'}(p))\subset B_{2r'}(0)$.  Moreover, one has $\mathcal{I}_p(W^j)=U^{i_j}$ if and only if, for any $0<r'<r$, $W^j\cap B_{\frac{1}{2}r'}(0)\subset \phi(U^{i_j}\cap B_{r'}(p))$.

  It remains only to show $\mathcal{I}_p$ is injective. As $\Sigma$ defines a cell structure in $U$, there is a $\rho>0$ so that $B_{\rho}(p)\subset U$ and, for every $0<\rho'<\rho$,  $B_{\rho'}(p)\cap U^{i}$ is connected.  Let $r_2=\frac{1}{2}\min \set{r_1, \rho}$.  Now suppose $\mathcal{I}_p(W^{j})=U^{i_j}=\mathcal{I}_p(W^{k})$.  As observed, this means $(W^j\cup W^k)\cap B_{\frac{1}{2} r_2} (0) \subset  \phi(U^{i_j}\cap B_{r_2}(p))$.   As $U^{i_j}\cap B_{r_2}(p)$ is connected, so is $\phi(U^{i_j}\cap B_{r_2}(p))\subset \Real^3\backslash T_p\Sigma$ and so it must be that $W^j=W^k$ and so $\mathcal{I}_p$ is injective.
 \end{proof}

\subsection{Reflection symmetry}\label{ReflectSec}
An important technical consequence of the unique continuation result, Lemma \ref{UniqueContLem}, and the Hopf maximum principle is that, under suitably hypotheses, an infinitesimal symmetry of a minimal Plateau surface (assumption (4) in Lemma \ref{SymmetryLem} below) extends to a global symmetry (the conclusion $R_0(\Sigma^+) \subset \Sigma$ in the same lemma).
 In order to state this precisely, it is helpful to recall some additional notation from  \cite{SchoenSymmetry}.
First let $R_t$ denote the reflection map through $\{x_3=t\}$, so that
\[
R_t(\mathbf{y}, x_3)=(\mathbf{y}, 2\,t-x_3)\,.
\]
If $\Sigma \subset \Real^3$ and $t\in \Real$, we let
$$
\Sigma_{t^+}=\Sigma\cap\{x_3\ge t\} \mbox{ and } \Sigma_{t^+}^\circ= \Sigma \cap \set{x_3>t}.
$$
Similarly, let
$$
\Sigma_{t^-}=\Sigma\cap\{x_3\le t\}\mbox{ and } \Sigma_{t^-}^\circ= \Sigma \cap \set{x_3<t}.
$$
Observe that, due to the possible presence of a floating disk in $\set{x_3=t}$, one may have $\Sigma_{t^\pm }^\circ\subsetneq \bar{\Sigma}_{t^\pm }^\circ\subsetneq \Sigma_{t^\pm}$, where $\bar{\Sigma}_{t^+}^\circ$ is the closure of ${\Sigma}_{t^+}^\circ$.


\begin{lem}\label{SymmetryLem}
	Let $U$ be an open set so that $R_0(U)\subset U$ and let $Q\subset U\cap \set{x_3<0}$ be a finite set of points. Suppose $\Sigma\subset U$ is a closed set so $\Sigma\backslash Q$ is a minimal Plateau surface without boundary in $U$ and $\bar{\Theta}(\Sigma,q)<2$ for all $q\in Q$.  If $\Sigma^+$ is a component of ${\Sigma}^\circ_{0^+}$ and $V$ is an open subset of $U\cap\set{x_3>0}$ so that:
	\begin{enumerate}
		\item $\Sigma^+$ is a connected regular minimal surface in $\set{x_3>0}$;
		\item $ \Sigma^+\subset \partial V$ in $U\cap \set{x_3>0}$;
		\item  $R_0(V)\cap \Sigma=\emptyset$;
		\item There is a $p\in \set{x_3=0}\cap \bar{\Sigma}^+$ so that $R_0(T_p\Sigma)=T_p\Sigma$ and $T_p\Sigma \neq \set{x_3=0}$,
	\end{enumerate}
	then $R_0(\Sigma^+) \subset \Sigma$.
\end{lem}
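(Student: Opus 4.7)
The plan is to combine a local reflection argument near $p$ with the global unique continuation principle (Lemma \ref{UniqueContLem}) to conclude $R_0(\Sigma^+)\subset\Sigma$.

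\textbf{Step 1 (local symmetry at $p$).} Since $Q\subset\{x_3<0\}$, the point $p$ is not in $Q$, so near $p$ the set $\Sigma$ is a minimal Plateau surface: there exist $r>0$ and a $C^{1,\alpha}$-diffeomorphism $\phi:B_r(p)\to\R^3$ with $\phi(p)=0$, $D\phi_p\in O(3)$ and $\phi(\Sigma\cap B_r(p))=T_p\Sigma$. As $R_0(T_p\Sigma)=T_p\Sigma$ and $T_p\Sigma\ne\{x_3=0\}$, the cone $T_p\Sigma$ has smooth sheets in both $\{x_3>0\}$ and $\{x_3<0\}$ paired (or fixed) by $R_0$. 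In particular, since $p\in\bar\Sigma^+$, one can select a smooth sheet $S^+$ of $\Sigma$ near $p$ whose part in $\{x_3>0\}$ lies in $\Sigma^+$, and by the symmetry of $T_p\Sigma$ its reflection $R_0(S^+)$ is, at $p$, tangent to a smooth sheet $S^-$ of $\Sigma$ contained in $\{x_3\le 0\}$.

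\textbf{Step 2 (applying the maximum principle).} By hypothesis (2), $R_0(S^+)\subset R_0(\Sigma^+)\subset\partial R_0(V)$, while by hypothesis (3), $\Sigma\cap R_0(V)=\emptyset$, so that $S^-\subset\Sigma$ lies entirely on one side of $R_0(S^+)$ near $p$. Since $R_0(S^+)$ and $S^-$ are smooth minimal surfaces sharing a tangent plane at $p$, the strong maximum principle (combined with the Hopf boundary-point lemma, to address the fact that $p$ lies on $\{x_3=0\}$) yields $R_0(S^+)=S^-$ in a neighborhood of $p$. In particular, $R_0(\Sigma^+)\cap\Sigma$ contains a point $p_0$ with $x_3(p_0)<0$.

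\textbf{Step 3 (global propagation).} Let $\Sigma_1$ be the connected component of $\Sigma$ containing $p_0$, set $\Sigma_2:=R_0(\Sigma^+)$ and $V':=R_0(V)$. By the hypotheses, $\Sigma_1\setminus Q$ is a connected minimal Plateau surface without boundary in $U\setminus Q$ (using $R_0(U)\subset U$) with $\bar{\Theta}(\Sigma_1,q)<2$ at each $q\in Q$; by hypotheses (1)--(2), $\Sigma_2$ is a connected regular minimal surface without boundary contained in $U\cap\partial V'$; and hypothesis (3) gives $V'\cap\Sigma_1=\emptyset$. Lemma \ref{UniqueContLem} applies and yields $\Sigma_1\subset\Sigma_2$; since $\Sigma_2$ is connected the second assertion of that lemma upgrades this to $\Sigma_1=\Sigma_2=R_0(\Sigma^+)$. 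Hence $R_0(\Sigma^+)\subset\Sigma$.

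\textbf{Main obstacle.} The delicate point is Steps 1--2, namely verifying that for each possible tangent cone $T_p\Sigma$ satisfying $R_0(T_p\Sigma)=T_p\Sigma$ (a vertical plane, a $Y$-cone with vertical or horizontal spine, or a suitably oriented $T$-cone) one can always pair a sheet $S^+$ of $\Sigma^+$ with a sheet $S^-$ of $\Sigma$ in $\{x_3\le 0\}$ so that $R_0(S^+)$ and $S^-$ share a tangent plane at $p$ and lie on one side of each other. The $C^{1,\alpha}$-diffeomorphism $\phi$ need not intertwine $R_0$, so first-order agreement comes only from the cone symmetry; the maximum principle then upgrades this infinitesimal matching to local coincidence, which is what makes the unique continuation step applicable.
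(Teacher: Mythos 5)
Your skeleton (a Hopf-type reflection argument at $p$, then propagation via Lemma \ref{UniqueContLem}) is the same as the paper's, but the step you yourself flag as the ``main obstacle'' is a genuine gap, and it is closed not by pairing sheets for every $R_0$-symmetric cone but by showing the problematic cones cannot occur. Hypothesis (4) alone does not force the paper's dichotomy: a $Y$-cone with vertical spine and a $T$-cone having $\{x_3=0\}$ as a mirror plane are both $R_0$-invariant. What rules them out is hypothesis (1) together with $p\in\bar{\Sigma}^+$: for either of those cones the part in $\{x_3>0\}$ is connected and contains a ray of $Y$-points, so $\Sigma^+$ would contain singular points arbitrarily close to $p$, contradicting that $\Sigma^+$ is a regular minimal surface in $\{x_3>0\}$. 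Hence only a vertical plane or a $Y$ with spine in $\{x_3=0\}$ (and one half-plane in $\{x_3=0\}$) can occur; without this reduction your Steps 1--2 are incomplete, since for a vertical-spine $Y$ the reflected upper half of a face touches the lower half of the same face only along edge/boundary points, where the interior strong maximum principle you invoke does not apply. Even in the surviving singular case your Step 2 is under-justified: $p$ is a common \emph{boundary} point of the two sheets, and their boundary curves (the $Y$-curve of $\Sigma$ and its reflection) need not coincide a priori. The paper handles this by writing both sheets as graphs $u_\pm$ in the $x_1$-direction over domains $V_\pm\subset\{x_1=0\}$, using hypothesis (1) once more to get the domain inclusion $V_-\subset R_0(V_+)$ so that $w=(u_+\circ R_0)-u_-\ge 0$ is defined on all of $V_-$, checking $w(0)=0$ and $\nabla w(0)=0$ from the cone structure, and then applying a Hopf boundary-point lemma valid for $C^{1,\alpha}$ domains (see \cite{LeoRosales}), rather than an interior tangency principle.

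Step 3 as written would also fail. Taking $\Sigma_1$ to be the full connected component of $\Sigma$ containing $p_0$ and working in the ambient set $U$, the hypotheses of Lemma \ref{UniqueContLem} are not satisfied: $\Sigma_2=R_0(\Sigma^+)$ is neither relatively closed in $U$ nor without boundary there (its closure meets $\{x_3=0\}$, e.g.\ at $p$); worse, the asserted conclusion $\Sigma_1\subset\Sigma_2$ is false, because the component of $\Sigma$ through $p_0$ contains $\Sigma^+\subset\{x_3>0\}$ while $\Sigma_2\subset\{x_3<0\}$. The correct application -- the paper's -- is in $U'=U\cap\{x_3<0\}$, with $\Sigma_1$ the component of $\Sigma^\circ_{0^-}=\Sigma\cap\{x_3<0\}$ whose closure contains $p$ (it exists and is unique because $T_p\Sigma\cap\{x_3<0\}$ is non-empty and connected in both surviving cases) and $\Sigma_2=R_0(\Sigma^+\cap\{x_3>0\})$; both are relatively closed in $U'$, $\Sigma_2\subset\partial(R_0(V))$, hypothesis (3) gives $R_0(V)\cap\Sigma_1=\emptyset$, and the local symmetry provides a point of $\Sigma_1\cap\Sigma_2$, whence $\Sigma_1=\Sigma_2=R_0(\Sigma^+)\subset\Sigma$. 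These fixes are available, but as written your classification of tangent cones, your boundary-point maximum principle, and your unique-continuation setup all need to be repaired.
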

\begin{proof}
	First observe that if $\Sigma$ is regular at $p$, then (4) implies that $T_p\Sigma$ is a plane orthogonal to $\set{x_3=0}$, i.e., a vertical plane.  Likewise, if $p$ is a singular point, then $T_p\Sigma$ is a $Y$ whose spine, $\ell$, lies in $\set{x_3=0}$ and so that one of the half-planes making up $Y\backslash \ell$ is contained in $\set{x_3=0}$. Hence, up to rotating around the $x_3$-axis, which leaves all hypotheses unchanged, we may assume $T_p\Sigma$ is $\set{x_1=0}$ in the regular case, or $T_p\Sigma=H_0\cup H_{120}\cup H_{-120}$ in the singular case. We first prove that $\Sigma$ is locally symmetric near $p$. That is, there is a $R>0$ so that $R_0(\Sigma^+)\cap B_R(p)\subset \Sigma$.
	
	\medskip
	
	\noindent {\it Local symmetry in the regular case}:  As $T_p\Sigma=\set{x_1=0}$,  there is a radius $r>0$ so that $B_{2r}(p)\cap \Sigma$ is a smooth surface and there is a solution to the minimal surface equation $u: D_r=\set{(0,s,t): s^2+t^2<r^2}\to \Real$ so that $u(0)=0$, $\nabla u(0)=0$ and
	$$
	\Sigma\cap B_{r/2}(p)\subset \set{(x_1(p)+u(s,t), x_2(p)+s, x_3(p)+t): (s,t)\in D_r}\subset \Sigma\cap B_{2r}(p).
    $$
    Let $V_-=D_r\cap \set{x_3\leq 0}$ be the closed half-disk. Set $v_-=u|_{V_-}$ and ${v}_+=(u\circ R_0)|_{V_-}$.  Clearly, $v_\pm$ satisfy the minimal surface equation on $V_-$,  $v_\pm(0)=0$ and $\nabla v_\pm(0)=0$.  Up to rotation around the $x_3$-axis by $180^\circ$, condition (2) and (3) imply that $v_+\geq v_-$ on $V_-$.  In particular, up to shrinking $r$, $w=v_+-v_-$ is a non-negative solution to a uniformly elliptic equation on $V_-$ with $w(0)=0$ and $\nabla w(0)=0$ and so, by the  Hopf maximum principle, $v\equiv 0$. That is $v_-\equiv v_+$ on $V_-$ and so claim holds with $R=r/2$.

    \medskip
	
	\noindent {\it Local symmetry in the singular case}: In this case $T_p \Sigma=H_0\cup H_{120}\cup H_{-120}$ and there exist $r>0$, so that
	$\Sigma\cap B_{2r}(p)$ is a $Y$-surface.  Indeed, by taking $r$ small enough there are two $C^{1,\alpha}$-domains with boundary $V_\pm\subset D_r=\{(0,s,t):s^2+t^2<r\}\subset\{x_1=0\}$ so that $D_r=V_+\cup V_-$ and
	\[
	\big\{(0,0, \pm t); t\in (0, r)\big\}\subset V_\pm\,,\qquad \mbox{$\eta=D_r\cap\partial V_+\cap\partial V_-$ is a $C^{1,\alpha}$ curve}\,,
	\]
	and two smooth solutions to the minimal surface equation $u_\pm:V_\pm\to \Real$ so
	\begin{equation}\label{upm}
	u_\pm(0)=0\,,\qquad u_+|_\eta=u_-|_{\eta}\,,\qquad \nabla u_{\pm}(0)=(0, \mp\sqrt{3})
	\end{equation}
 and
	\[
	\Sigma\cap B_{r/2}(p)\subset \Big\{(x_1(p)+u_\pm (s,t),x_2(p)+s, x_3(p)+t): (s,t)\in V_\pm \Big\}\subset \Sigma\,;
	\]
	see
	\begin{figure}
		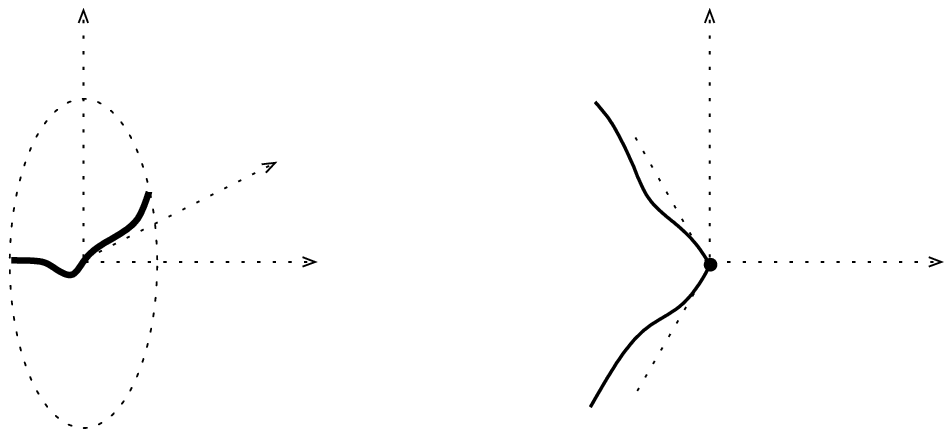\caption{{\small A visualization of the case $T_p \Sigma=H_0\cup H_{120}\cup H_{-120}$. In a neighborhood of $p$, $\Sigma$ contains two minimal graphs {\bf in the $x_1$-direction}, defined over complementary subdomains $V^\pm$ of a disk. The graphs meet along a $C^{1,\alpha}$-curve, and form a $120^\circ$ angle at $p$: (a) the domains of the  two graphs which are subsets of a disk $D_r\subset\{x_1=0\}$ centered at $0$; (b) a cross section by $\{x_2=0\}$ stresses the angle condition.}}\label{fig uplusmin}
	\end{figure}
	Figure \ref{fig uplusmin}. By hypothesis (1), $\Sigma$ is regular in $\set{x_3>0}$ and so $V_-\subset R_{0}(V_+)$ and so $v_+=(u_+\circ R_0)|_{V_-}$ is defined on the same domain, $V_-$, as $v_-=u_-$.

	 Clearly,  (2) and (3) imply either that $v_+\geq v_-$ on $V_-$ or $v_-\geq v_+$.  Indeed, the former occurs if
	 $$
	  \Big\{(x_1(p)+z,x_2(p)+s, x_3(p)+t): (s,t)\in V_+,  u_+(s,t)<z \Big\}\cap B_{r}(p)\subset V
	 $$
	 and the later occurs when
	$$
	\Big\{(x_1(p)+z,x_2(p)+s, x_3(p)+t): (s,t)\in V_+,  u_+(s,t)>z \Big\}\cap B_{r}(p)\subset V.
	$$
	We assume $v_+\geq v_-$, the proof is the same in the other case. Observe \eqref{upm} implies $v_+(0)=v_-(0)=0$ and $\nabla v_+(0)=\nabla v_-(0)$. As $v_-$ and $v_+$ both satisfy the minimal surface equation on $V_-$, up to shrinking $r$,  $w=v_+-v_-\geq 0$ satisfies a uniformly elliptic equation on $V_-$. As $w(0)=0$ and $\nabla w(0)=0$, the Hopf maximum principle for $C^{1, \alpha}$ domains -- see  \cite{LeoRosales} -- implies $w\equiv 0$, that is, $u_+\circ R_0=u_-$ on $V_-$. Hence, the claim holds with $R= r/2$.
	
	\medskip
	
	\noindent {\it Propagating the symmetry}: Finally, we apply Lemma \ref{UniqueContLem} to propagate the inclusion $R_0(\Sigma_+)\cap B_R(p)\subset \Sigma$ to  $R_0(\Sigma^+)\subset \Sigma$.
	Let $\Sigma_1$ be the component of ${\Sigma}_{0^-}^\circ$ whose closure contains $p$ -- such a component exists and is unique as $T_p\Sigma\cap \set{x_3<0}$ is connected and non-empty.  Set $\Sigma_2=R_0(\Sigma^+\cap \set{x_3>0})$, so that, in $U'=U\cap \set{x_3<0}$,
	$$
   \Sigma_2=R_0(\Sigma^+)\subset \partial\big( R_0(V)\big)
	$$
	and by hypothesis (3), $R_0(V)\cap \Sigma_1=\emptyset$. As $B_R(p)\cap \Sigma_1=B_R(p)\cap \Sigma_2$ and both $\Sigma_1$ and $\Sigma_2$ are connected, Lemma \ref{UniqueContLem} implies $R_0(\Sigma^+)=\Sigma_2=\Sigma_1\subset \Sigma$.
\end{proof}

\subsection{The moving planes argument}\label{section mp}
We now prove the  key technical result of the paper: Let  $\Sigma$ be a minimal Plateau surface in a convex cylinder $C_\Omega$ whose boundary $B$ is contained in the boundary of the cylinder. If $B_{0^+}$ is a graph of locally bounded slope and $B$ is ``ordered by reflection with respect to the plane $\{x_3=0\}$'' (assumption (b) below), then the same holds for $\Sigma$, i.e.,  $\Sigma_{0^+}$ is a graph of locally bounded slope, and $\Sigma$ is ordered by reflection, see conclusions (i) and (ii) of Theorem \ref{CylThm}.

In order to state this result concisely we recall the following partial order from \cite{SchoenSymmetry}. For subsets $A,B\subset \Real^3$ we write
\[
A\leq B
\]
if $\pi(A)= \pi(B)$, and if $(\mathbf{y},t)\in \pi^{-1}(\mathbf{y})\cap A$ and $(\mathbf{y}, t')\in \pi^{-1}(\mathbf{y})\cap B$ implies $t\le t'$. Here, as in the previous section, $\pi(\mathbf{y},t)=\mathbf{y}$ for every $(\mathbf{y},t)\in\Real^3$.

\begin{thm}\label{CylThm}
	Let $\Omega\subset \Real^2$ be a bounded, open convex set with $C^1$-boundary, and let the open cylinder over $\Omega$ be denoted by
	\[
    C_{\Omega}=\set{(\mathbf{y},x_3): \mathbf{y}\in \Omega}\,.
    \]
    Let $\Sigma\subset \Real^3$ be a compact set without isolated points and let $B\subset \partial C_{\Omega}$ be a closed, non-empty, one-dimensional $C^1$-submanifold (not necessarily connected).  Suppose that $B$ and $\Sigma$ satisfy the following:
	\begin{enumerate}
		\item[(a)] $B_{0^+}$ is a graph of locally bounded slope and $T_pB$ is not vertical for any $p\in B\cap\{x_3>0\}$;
		\item[(b)] $B_{0^-}\le R_0(B_{0^+})$;
		\item[(c)] $(\partial C_{\Omega})_{0^+}\backslash B_{0^+}$ has two connected components, denoted by $V^0$ and $V^1$;
		\item[(d)] $\Sigma\backslash Q$ is a minimal Plateau surface in $\R^3\setminus Q$, where $Q=\set{q_1, \ldots, q_M}$ is a finite subset of $C_{\Omega}$ and,  for every $i$, $\bar{\Theta}(\Sigma, q_i)<2$;
        \item[(e)] $\partial (\Sigma\backslash Q)=B$ and $\Sigma\backslash B\subset C_{\Omega}$;
		\item[(f)] $\Sigma\setminus Q$ defines a cell structure $\{U^i\}_{i=0}^N$ in $C_{\Omega}\backslash Q$, and for $i=0,1$ we have
        \[
        \bar{V}^i=\partial U^i\cap  ( \partial C_{\Omega})_{0^+}\,;
        \]
	\end{enumerate}
see
\begin{figure}
  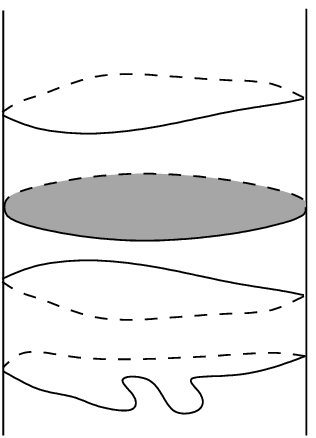\caption{{\small The situation in Theorem \ref{CylThm}. We consider a $C^1$-boundary data $B$ contained in $\pa C_\Om=(\pa\Om)\times\R$. The ``upper part'' $B_{0^+}$ of $B$ is a graph with bounded slope over $\pa\Om$, and, after reflection by $\{x_3=0\}$, it lies above the ``lower part'' $B_{0^-}$ of $B$ (which is not required to be a graph). The upper part of $\pa C_\Om$ is divided by $B_{0^+}$ into two components $V^0$ and $V^1$. We consider a  minimal Plateau surface with boundary $\Sigma$. If $\partial\Sigma$ is bounded by $B$ in such a way that $V^0$ and $V^1$ corresponds to the boundaries of the cells $U^0$ and $U^1$  defined by $\Sigma$ in $C_\Om$, then the theorem ensures that properties (a) and (b) of $B$ are ``transferred'' to $\Sigma$, see (i) and (ii).
  }}\label{fig cyl}
\end{figure}
Figure \ref{fig cyl}. Then
\begin{enumerate}
  \item[(i)] $\Sigma_{0^+}$ is a graph with locally bounded slope;
  \item[(ii)] $\Sigma_{0^-}\le R_0(\Sigma_{0^+})$;
  \item[(iii)] there is $\epsilon>0$ so that $\Sigma \cap \set{x_3>-\epsilon}$ is a minimal Plateau surface in $\set{x_3>-\epsilon}$.
\end{enumerate}
\end{thm}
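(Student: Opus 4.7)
The plan is to carry out Alexandrov's method of moving planes in the Plateau setting, using Lemmas \ref{RemoveSingLem}, \ref{UniqueContLem}, \ref{SymmetryLem}, and \ref{CellularLem} as the main non-smooth tools. Let $M=\max_{p\in\Sigma} x_3(p)$ and define
\[
A = \bigl\{t\in[0,M]\,:\,\forall s\in[t,M],\ \Sigma_{s^+}\ \text{is a graph of locally bounded slope and }\ R_s(\Sigma_{s^+})\le \Sigma_{s^-}\bigr\},
\]
setting $t^*=\inf A$. The target is to prove $t^*=0$, which delivers (i) and (ii) at once; conclusion (iii) will fall out as a byproduct of ruling out $T$-points and points of $Q$ in a slab $\{x_3>-\epsilon\}$ via Lemma \ref{RemoveSingLem} and the density values in \eqref{density for minimal plat surf}.

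\emph{Initialisation and closedness.} First I would check that $A$ contains an interval of the form $(M-\delta,M]$. Inspection of the density list \eqref{density for minimal plat surf} combined with the observation that at any $Y$-, $T$- or exotic $Q$-point at least one sheet (or spine) must ascend shows that the maxima of $x_3|_\Sigma$ occur only at regular interior points with horizontal tangent plane; the hypothesis that $B\subset\partial C_\Omega$ and hypothesis (a) ensure $B$ contributes nothing above the highest height of $B$. Standard elliptic theory then makes $\Sigma_{t^+}$ a disjoint union of convex graphical caps for $t$ close to $M$, yielding the strict ordering. Closedness of $A$ under monotone limits $t_n\downarrow t^*$ follows because the graph and ordering properties pass to the Hausdorff limit of $\Sigma_{t_n^+}$, at worst enriched by a horizontal $Y$-spine or flat disk on $\{x_3=t^*\}$, and a diagonal argument preserves the local bounded-slope condition.

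\emph{Pushing strictly below $t^*$.} Assume for contradiction that $t^*>0$. Either (1) the ordering at $t^*$ is strict and the graph parametrisation has uniformly controlled slopes, in which case a continuity argument using hypothesis (a) at the boundary $B$ (where the non-vertical tangency clause is crucial) gives $(t^*-\delta',t^*]\subset A$, contradicting the definition of $t^*$; or (2) there is a component $\Sigma^+$ of $\Sigma_{t^{*+}}^\circ$ and a first contact point $p\in\bar\Sigma^+\cap\{x_3=t^*\}\cap R_{t^*}(\Sigma)$. In case (2) the hypotheses of Lemma \ref{SymmetryLem} are verified: $T_p\Sigma$ is either a vertical plane (regular contact) or a $Y$-cone with horizontal spine (singular contact), so $R_{t^*}(T_p\Sigma)=T_p\Sigma$, while hypothesis (f) together with Lemma \ref{CellularLem} supplies an open cell $V$ with $R_{t^*}(V)\cap\Sigma=\emptyset$. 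Lemma \ref{SymmetryLem} then gives $R_{t^*}(\Sigma^+)\subset\Sigma$. Iterating across all components of $\Sigma_{t^{*+}}^\circ$ yields $R_{t^*}(\Sigma)=\Sigma$, hence $R_{t^*}(B)=B$. Since $B_{0^+}$ is a graph of height function $h$ over $\pi(B_{0^+})$ by (a), any $p\in B_{0^-}$ with $\pi(p)=\mathbf{y}$ maps under $R_{t^*}$ to the unique point of $B_{0^+}$ over $\mathbf{y}$, forcing $x_3(p)=2t^*-h(\mathbf{y})$; but hypothesis (b) demands $x_3(p)\le -h(\mathbf{y})$, giving $t^*\le 0$ and thus the desired contradiction.

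\emph{Conclusion and main obstacle.} It follows that $t^*=0$, establishing (i) and (ii). For (iii), since $\Sigma_{0^+}$ is now a graph of locally bounded slope, any $T$- or $Q$-point in $\{-\epsilon<x_3\le 0\}$ would, through Lemma \ref{RemoveSingLem} applied at the point and upper semicontinuity of density, produce a tangent cone incompatible with the upper half-plane structure inherited from the bounded-slope graph condition at density strictly less than $2$; compactness of $\Sigma$ and finiteness of $Q$ allow us to pick $\epsilon>0$ so that $\Sigma\cap\{x_3>-\epsilon\}$ is a genuine minimal Plateau surface. I expect the main obstacle to be case (2) above: matching the three cells of a $Y$-tangent cone at a singular contact $p$ to a cell of $\Sigma$ whose reflection is disjoint from $\Sigma$, via the injective map $\mathcal{I}_p$ of Lemma \ref{CellularLem}, while simultaneously handling potential boundary contacts between $\Sigma$ and $\partial C_\Omega$, where the non-vertical tangency condition built into hypothesis (a) is essential to keep $\Sigma_{t^+}$ from losing its graph property near the wall.
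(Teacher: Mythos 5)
Your outline follows the paper's strategy in broad strokes (descend with reflecting planes from the top, use Lemmas \ref{RemoveSingLem}, \ref{SymmetryLem}, \ref{CellularLem} at a critical height, and derive the final contradiction from the boundary hypotheses (a) and (b)), but as written it has genuine gaps. First, the initialisation is wrong: by the convex hull property for the stationary varifold $V_\Sigma$ one has $\Sigma\subset \bar{C}_\Omega\cap\set{T_-\le x_3\le T_+}$ with $T_\pm$ the extreme heights of $B$, so the maximum of $x_3$ on $\Sigma$ is attained \emph{on} $B$ (or along a flat disk $\bar{\Omega}\times\set{T_+}$), never at ``regular interior points with horizontal tangent plane''; the picture of disjoint convex graphical caps near the top does not occur. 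The correct start requires showing $|\nabla_\Sigma x_3|<1$ along $\Sigma\cap\set{x_3=T_+}\subset B$, using the supporting half-spaces of the convex cylinder, Hopf-type comparisons with $\bar\Omega\times\set{T_+}$, and the non-verticality of $T_pB$ from (a); moreover the ``trivial case'' in which $\bar{\Omega}\times\set{T_+}$ is a component of $\Sigma$ must be handled separately (it is incompatible with the moving-plane initialisation but still satisfies (i)--(iii) by a direct convex-hull argument), and your scheme never addresses it. Relatedly, the defining condition of your set $A$, namely $R_s(\Sigma_{s^+})\le\Sigma_{s^-}$, both reverses the intended ordering and cannot hold literally for $s>0$, since the relation $\le$ requires equal projections $\pi(R_s(\Sigma_{s^+}))=\pi(\Sigma_{s^-})$; the one-sided condition has to be encoded at the level of cells (as in property (P4) of the paper) precisely so that Lemma \ref{SymmetryLem} can be fed an open set $V$ with $R_{t}(V)\cap\Sigma=\emptyset$.

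Second, and more seriously, the heart of the proof is asserted rather than proved. At the critical height you claim that the tangent cone at a contact point $p$ is automatically reflection-symmetric (``a vertical plane or a $Y$-cone with horizontal spine''), but this is exactly what must be established: one must first exclude $p\in Q$ and $T$-points via Lemma \ref{RemoveSingLem} (which is applicable only because the graph property already holds above $t^*$), and even then a $Y$-point with horizontal spine admits a one-parameter family of cones $H_{\theta_2}\cup H_{\theta_2+120}\cup H_{\theta_2-120}$; ruling out $\theta_2\neq 0$ is where the cell structure does real work, through the injectivity of $\mathcal{I}_p$ (Lemma \ref{CellularLem}) combined with the facts that only the two distinguished cells reach the heights above $t^*$ and that the reflected upper cell stays inside $U^0$. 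You flag this as ``the main obstacle'' but give no argument, so the proposal stops exactly where the proof becomes nontrivial. Finally, your passage from $R_{t^*}(\Sigma^+)\subset\Sigma$ for one component to $R_{t^*}(\Sigma)=\Sigma$ and hence $R_{t^*}(B)=B$ is unjustified (Lemma \ref{SymmetryLem} only applies to components whose closures contain a symmetric, non-horizontal tangent cone at height $t^*$); the contradiction should instead be drawn from the single reflected component, by taking $q\in B\cap\Sigma^+$ (nonempty by the convex hull property) and observing that $q$ and $R_{t^*}(q)$ lie on the same vertical line in $B$, which violates the graph property in (a) when $t^*\ge T_+/2$ and the ordering (b) when $0<t^*<T_+/2$.
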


Theorem \ref{CylThm}, whose proof is presented below, has the following corollary:
\begin{cor}\label{CylCor} Let $\Omega$, $B$, and $\Sigma$ satisfy the assumptions in Theorem \ref{CylThm}, but replace assumptions (b), (c) and (f) with
\begin{enumerate}
		\item[(b')] $B_{0^-}= R_0(B_{0^+})$;
		\item[(c')] $(\partial C_{\Omega})_{0^\pm}\backslash B_{0^\pm}$ has two connected components, denoted by $V^{0,\pm}$ and $V^{1,\pm}$;
		\item[(f')] $\Sigma\setminus Q$ defines a cell structure $\mathcal{C}=\{U^i\}_{i=0}^N$ in $C_{\Omega}\backslash Q$ and there are cells $U^{i,\pm}\in \mathcal{C}$ so that
\[
        \bar{V}^{i,\pm}=\partial U^{i,\pm}\cap  ( \partial C_{\Omega})_{0^\pm}\,,\qquad i=0,1\,,
        \]
       here $U^{i,+}$ and $U^{i,-}$ are not necessarily distinct elements of $\mathcal{C}$.
	\end{enumerate}
Then $R_0(\Sigma_{0^+})=\Sigma_{0^-}$ and $\Sigma$ is a minimal Plateau bi-graph.
\end{cor}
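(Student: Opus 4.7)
The plan is to apply Theorem \ref{CylThm} twice --- once to $(\Sigma,B)$ as given, and once to the reflected configuration $(R_0(\Sigma),R_0(B))$ --- and then combine the outputs using the reflection symmetry of $B$ encoded in (b').

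First, I would observe that (b') trivially implies the one-sided ordering $B_{0^-}\le R_0(B_{0^+})$ required by hypothesis (b) of Theorem \ref{CylThm}, while (c') and (f') are each strengthenings of (c) and (f), since they already supply two distinguished cells $U^{0,+},U^{1,+}$ along the upper half of $\pa C_\Om$ with the required boundary correspondence. Hypotheses (a), (d), (e) are assumed as stated. Applying Theorem \ref{CylThm} to $(\Sigma,B)$ then yields: $\Sigma_{0^+}$ is a graph of locally bounded slope, $\Sigma_{0^-}\le R_0(\Sigma_{0^+})$, and $\Sigma\cap\{x_3>-\eps\}$ is a minimal Plateau surface for some $\eps>0$.

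Next I would replay the argument on $\tilde\Sigma=R_0(\Sigma)$, $\tilde B=R_0(B)$, $\tilde Q=R_0(Q)$, with the cell structure $\{R_0(U^i)\}$ obtained by reflecting $\mathcal{C}$. Using (b') one checks that $\tilde B_{0^+}=R_0(B_{0^-})=B_{0^+}$ is still a graph of locally bounded slope with non-vertical tangents (hypothesis (a)), that $\tilde B_{0^-}=R_0(B_{0^+})=B_{0^-}$ satisfies $\tilde B_{0^-}\le R_0(\tilde B_{0^+})$ with equality (hypothesis (b)), and that (c') and (f') supply (c) and (f) with the roles of the upper and lower halves of $\pa C_\Om$ interchanged; the cells distinguished on $(\pa C_\Om)_{0^+}$ after reflection are $R_0(U^{0,-})$ and $R_0(U^{1,-})$, with boundaries the reflected components $R_0(V^{0,-})$ and $R_0(V^{1,-})$ of $(\pa C_\Om)_{0^+}\setminus\tilde B_{0^+}$. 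Theorem \ref{CylThm} thus gives the mirror conclusions: $R_0(\Sigma_{0^-})$ is a graph of locally bounded slope, $R_0(\Sigma_{0^+})\le \Sigma_{0^-}$, and $\Sigma\cap\{x_3<\eps'\}$ is a minimal Plateau surface for some $\eps'>0$.

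To finish, the two ordering statements $\Sigma_{0^-}\le R_0(\Sigma_{0^+})$ and $R_0(\Sigma_{0^+})\le\Sigma_{0^-}$ together with the graph property on both sides (so that each fiber $\pi^{-1}(\mathbf{y})$ hits $\Sigma_{0^+}$ and $\Sigma_{0^-}$ in at most one point each) yield
\[
R_0(\Sigma_{0^+})=\Sigma_{0^-}\,,
\]
which is the claimed reflection symmetry. The two instances of conclusion (iii) cover all of $\R^3$, so the exotic singular set $Q$ is empty and $\Sigma$ is globally a minimal Plateau surface. Combined with the graph property of $\Sigma_{0^+}$ and $\Sigma_{0^-}$ and the inclusions from hypothesis (e), this is exactly the definition of a minimal Plateau bi-graph. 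I do not anticipate a serious obstacle: the only point that needs to be verified carefully is that the reflected configuration really does satisfy all the hypotheses of Theorem \ref{CylThm}, and this is a direct consequence of the symmetry assumptions (b'), (c'), (f').
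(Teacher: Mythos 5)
Your proposal is correct and follows essentially the same route as the paper: apply Theorem \ref{CylThm} to both $\Sigma$ and $R_0(\Sigma)$ (the symmetric hypotheses (b'), (c'), (f') making the reflected configuration admissible), combine the two instances of conclusion (ii) to get $R_0(\Sigma_{0^+})=\Sigma_{0^-}$, and use the two instances of (iii) together with the graph conclusions to conclude $\Sigma$ is a minimal Plateau bi-graph. No gaps.
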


\begin{proof} Thanks to assumptions (b'), (c') and (f'), we can  apply Theorem \ref{CylThm} to both $\Sigma$ and $R_0(\Sigma)$, and so,  by conclusion (i), it is true that $\Sigma_{0^+}$ and $(R_0(\Sigma))_{0^+}=R_0(\Sigma_{0^-})$ are graphs of locally bounded slope.  Furthermore, conclusion (ii) implies
\[
\Sigma_{0^-}\le R_0(\Sigma_{0^+}) \mbox{ and } R_0(\Sigma_{0^+})=(R_0(\Sigma))_{0^-}\le R_0\big((R_0(\Sigma))_{0^+}\big)=\Sigma_{0^-}\,.
\]
Hence,  $R_0({\Sigma}_{0^+})=\Sigma_{0^-}$ and so $\Sigma=R_0(\Sigma)$. By conclusion (iii) of Theorem \ref{CylThm},  $\Sigma$ and $R_0(\Sigma)$ are both minimal Plateau surfaces in $\set{x_3>-\epsilon}$ for some $\epsilon>0$, and so $\Sigma$ is a minimal Plateau surface in $\R^3$. Finally, as $\Sigma_{0^-}=R_0({\Sigma}_{0^+})$ and $\Sigma_{0^+}$ are both graphs of locally bounded slope and $\sing(\Sigma)\subset \set{x_3=0}$, $\Sigma$ is a minimal Plateau bi-graph.
\end{proof}

\begin{proof}[Proof of Theorem \ref{CylThm}]
First observe that, by deleting points from $Q$, we may assume that $\Sigma$ is not a minimal Plateau surface in a neighborhood of any $q\in Q$. That is, the points of $Q$ are essential singularities of $\Sigma$. We define
\[
\sing(\Sigma)=\sing(\Sigma\backslash Q)\cup Q\,,
\]
where $\sing(\Sigma\backslash Q)$ is the singular set of $\Sigma\setminus Q$ as a minimal Plateau surface in $C_\Omega\setminus Q$. Similarly, let
\[
\reg(\Sigma)=\Sigma\backslash \sing(\Sigma)\,,\qquad \mathrm{int}(\Sigma)=\Sigma\backslash (B\cup \sing(\Sigma))\,.
\]
	
\noindent {\it Step one}: We establish some elementary facts.  First, we claim,
\[
T_+=\max\set{x_3(p):p\in B}>0> T_{-}=\min\set{x_3(p): p\in B}.
\]
Indeed, as $B$ is non-empty, either $B_{0^+}$ or $B_{0^-}$ is non-empty. Furthermore, $R_0(B_{0^+})\geq B_{0^-}$ requires that $\pi(B_{0^+})=\pi(B_{0^-})$ and so {\it both} $B_{0^+}$ and $B_{0^-}$ are non-empty. In particular, both $T_+$ and $T_-$ are finite. Clearly, $T_+\ge0$. If $T_+=0$, then $(\partial C_{\Omega})_{0^+}\backslash B_{0^+}$ has one connected component in $(\partial C_{\Omega})_{0^+}$, contradicting assumption (c) and so $T_+>0$, and because assumption (b) implies $T_-\le-T_+$ we conclude that $T_-<0$.

 Secondly, by the same argument used in step one of the proof of Lemma \ref{RemoveSingLem}, the multiplicity one varifold $V_\Sigma$ defined by $\Sigma$ is stationary in $\Real^3\backslash B$. Hence, the convex hull property of $V_\Sigma$ and the properties of $B$ imply
\begin{equation}
  \label{convex hull principle}
  \Sigma\subset\overline{C}_\Omega\cap\{T_-\leq x_3\le T_+\}\,.
\end{equation}

Finally, we review assumption (f): $\Sigma\setminus Q$ defines a cell structure $\mathcal{C}(\Sigma)=\{U^i\}_{i=0}^N$ in $C_{\Omega}\backslash Q$, so that the sets $U^i$ are open and connected, with
\begin{equation}
\label{cell structure2}
\partial(\Sigma\setminus Q) \subset \partial(C_{\Omega}\setminus Q)\,,\qquad C_{\Omega}\backslash (\Sigma\cup Q)=(C_\Omega\backslash Q)\backslash (\Sigma\backslash Q) =\bigcup_{i=0}^N U^i\,,
\end{equation}
and $\partial U^i\cap \partial C_{\Omega}=\bar{V}^i$ for $i=0,1$. As $\Sigma$ is compact and $C_{\Omega}$ has two components at infinity, corresponding to $x_{3}\to \pm \infty$ there is exactly one unbounded component of $\mathcal{C}(\Sigma)$ that contains points $p$ with $x_3(p)>T_+$. Up to a swapping $V^0$ and $V^1$, we may assume $U^1$ is this component and so $V^1$ is unbounded.

\medskip

\noindent {\it Step two}: We verify that the theorem holds in the ``trivial case'' where
\begin{equation}
  \label{trivial case}
  \mbox{$\overline{\Omega}\times\{T_+\}$ is a connected component of $\Sigma$}\,.
\end{equation}
Indeed, if this occurs, than the definition of minimal Plateau surface implies that there is a $\delta>0$ so that $\Sigma=\overline{\Omega}\times\{T_+\}$ in the slab $\{T_+-\delta<x_3<T_++\delta\}$.  In particular, $\partial\Sigma=\pa\Om\times\{T_+\}$ in this slab. As $\pa \Om \times \{T_+\}$ is a graph, assumption (a) implies $B_{0^+}=\pa\Om\times\{T_+\}$.  Hence, as $\Sigma$ cannot have a connected component without boundary points (indeed, by the convex hull principle every such component, being contained in the convex envelope of its boundary points, would be empty; see \cite[Theorem 19.2]{SimonLN}), we conclude that
\begin{equation}
  \label{trivial case stronger}
  \Sigma_{0^+}=\overline{\Omega}\times\{T_+\}\,.
\end{equation}
Conclusion (i) is thus immediate. By assumption (e) we have
\[
(\partial\Om)\times\{T_+\}=(\partial\Sigma)\cap\{x_3>0\}=B\cap\{x_3>0\}=B_{0^+}\cap\{x_3>0\}
\]
so that assumption (b) gives $B_{0^-}\le(\partial\Omega)\times\{-T_+\}$.  In particular,  $\Sigma_{0^-}$ is a minimal Plateau surface without boundary in $\set{x_3>-T_+}\backslash Q$ and so the varifold $V_{\Sigma_{0^-}}$ defined by $\Sigma_{0^-}$ is stationary in $\Real^3\backslash B_{0^-}$. Hence, the convex hull property implies,
\[
\Sigma_{0^-}\subset\overline\Omega\times(-\infty,-T_+]\,,
\]
which implies conclusion (ii). Finally, by (ii) and \eqref{trivial case stronger} it follows that $\Sigma\cap\{x_3>-T_+\}=\overline{\Omega}\times\{T_+\}$, so that conclusion (iii) holds. Having proved the theorem when \eqref{trivial case} holds, we will henceforth assume that {\it \eqref{trivial case} does not hold}.

\medskip

\noindent {\it Step three}: Begin the moving planes argument. For $t\in (0, T_+)$ and $U^i\in\mathcal{C}(\Sigma)$, let
	\[
	A^i=U^i\cap\{x_3<T_+\}
	\]
see
\begin{figure}
  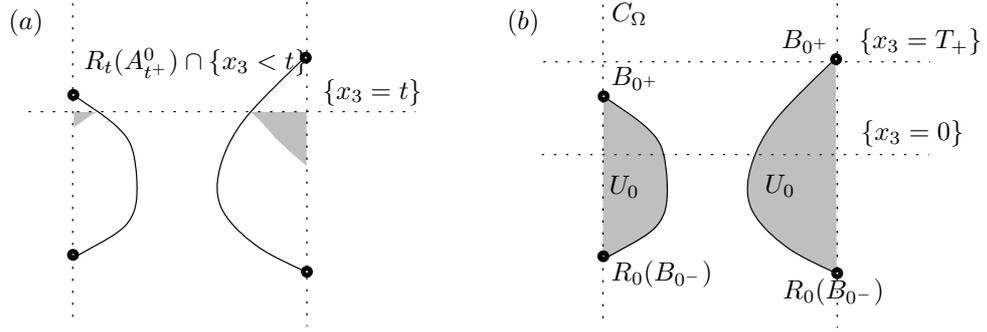\caption{{\small (a) An illustration of (P4): The set $R_t(A^0_{t^+})\cap\{x_3<t\}$, depicted in grey, is contained in $U^0$. (b) An illustration of (P5): Only $U_0$ and $U_1$ intersect $\{x_3=t\}$.}}\label{fig uit}
\end{figure}
Figure \ref{fig uit}. Let us consider the set of heights
\[
G=\set{t\in (0,T_+): \mbox{properties (P1)--(P5) hold for $t$}}\,,
\]
 where the properties defining $G$ are:
	\begin{enumerate}
		\item[(P1)] $\Sigma\cap \{x_3=t\}$  is a subset of ${\rm reg}(\Sigma)$;
		\item[(P2)] $|\nabla_{\Sigma} x_3|<1$ on $\Sigma\cap \{x_3=t\}$;
		\item[(P3)] $R_t(\Sigma_{t^+})$ and $\Sigma_{t^+}$ are graphs with locally bounded slope over $\Omega_t=\pi(\Sigma_{t^+})$;
		\item[(P4)] $\overline{R_t(A^0_{t^+})}\cap \set{x_3<t}\cap C_{\Omega}\subset U^0$;
		\item[(P5)] The only cells of $\mathcal{C}$ whose closures meet $\{x_3=t\}$ are $U^0$ and $U^1$.
	\end{enumerate}
We claim that $G=(0,T_+)$.  To prove this, we show that
\begin{equation}
  \label{ex of t0}
  \mbox{$\exists\, t_0\in(0,T_+)$ such that $(t_0,T_+)\subset G$}\,,
\end{equation}
and then prove that one may take $t_0=0$.

First of all, keeping in mind  we excluded the trivial case \eqref{trivial case}, one has
	\begin{equation}
		\label{nabla sigma x3 near top}
		\mbox{$\exists\, t_0\in(0,T_+)$ such that}\,\,
\left\{\begin{split}&\mbox{(P1) holds for all $t\in [t_0,T_+)$, and}
\\
&\mbox{$0<|\nabla_{\Sigma} x_3|<1$ on $\Sigma_{t_0^+}$}\end{split}\right .\,\,
	\end{equation}
To see this we first observe that
\begin{equation}
  \label{ep}
  \set{x_3=T_+}\cap \Sigma\cap C_\Omega=\emptyset\,.
\end{equation}
Indeed, let $p\in \set{x_3=T_+}\cap \Sigma\cap C_\Omega$, set $U=C_\Omega$, $V=C_\Omega\cap\{x_3>T_+\}$, $\Sigma_2={\Omega}\times\{T_+\}$ and denote by $\Sigma_1$ the component of $\Sigma\cap C_{\Omega}$ containing $p$. Lemma \ref{UniqueContLem} and \eqref{convex hull principle} imply that $p\in \Sigma_1\cap \Sigma_2$ and so $\Sigma_1=\Sigma_2={\Omega}\times\{T_+\}$. That is, \eqref{trivial case} holds, contradicting the assumption made in step two.

\smallskip

By \eqref{ep}, $\Sigma\cap\{x_3=T_+\}\subset B$ and, by definition, $\Sigma$ is regular in a neighborhood of $B$, thus, for $t_0$ closed enough to $T_+$, (P1) holds for every $t\in[t_0,T_+)$. In particular,
\begin{equation}
  \label{Q not t0}
  Q\cap\{t_0\le x_3\}=\emptyset.
\end{equation}
Now, let $p\in \Sigma\cap \{x_3=T_+\}\subset B$. If $|\nabla_{\Sigma} x_3|(p)=0$, then \eqref{convex hull principle} and the Hopf maximum principle applied to $\Sigma$ and $\overline\Omega\times\{T_+\}$ imply there is a connected neighborhood, $\Sigma_p$, of $p$ in $\Sigma$ so $\Sigma_p\subset \Sigma\cap \{x_3=T_+\}$. If $\Sigma_1$ is the component of $\Sigma\cap C_{\Omega}$ containing $\Sigma_p$ and $\Sigma_2=\Omega\times \set{T_+}$, then  $\Sigma_1\cap \Sigma_2\neq \emptyset$ and so, arguing as above, \eqref{trivial case} holds, and a contradiction is reached. Therefore $|\nabla_{\Sigma} x_3|>0$ on $\Sigma \cap\set{x_3=T_+}$, and so \eqref{nabla sigma x3 near top} holds for $t_0$ near enough to $T_+$ by continuity.

\smallskip

Again, by continuity, to show $|\nabla_{\Sigma} x_3|<1$ on $\Sigma_{t_0^+}$ for $t_0$ near to $T_+$ it is enough to show $|\nabla_{\Sigma} x_3|<1$ on $\Sigma\cap\{x_3=T_+\}\subset B$. To show this last fact, let $H$ be a supporting closed half-space to $C_{\Omega}$ at $p$ ($H$ is unique as $\partial \Omega$ is $C^1$ regular), and set $\Pi=\partial H$. By \eqref{convex hull principle}, $\Sigma\subset H$.  Consider the half-space $T_p\Sigma$. By the Hopf maximum principle, if $T_p\Sigma \subset \Pi$, then there is a neighborhood $\Sigma'$ of $p$ in $\Sigma$ with $\Sigma'\subset \Pi$.  As $\Pi \cap C_{\Omega}=\emptyset$, contradicts assumption (e), i.e., that $\Sigma\backslash B\subset C_{\Omega}$. Hence, $T_p \Sigma \subsetneq \Pi$, while $B\subset\pa C_\Om$ implies $T_pB\subset\Pi$. Since $T_pB$ is not vertical (either by assumption (a), or because, in this specific case, it is actually contained into $\{x_3=T_+\}$, and thus is horizontal), we conclude that $|\nabla_{\Sigma} x_3|(p)<1$ and so \eqref{nabla sigma x3 near top} holds.  In fact, as we will use later, this argument implies
\begin{equation}
\label{P2Boundary} |\nabla_\Sigma x_3|<1 \mbox{ for any $p\in B\cap \set{x_3>0}$}.
\end{equation}
\smallskip

We now show that, after possibly moving $t_0$ toward $T_+$, (P1)-(P5) hold for $t\in(t_0,T_+)$. Indeed, \eqref{nabla sigma x3 near top}, immediately gives a $t_0$ so (P1) and (P2) hold for every $t\in (t_0,T_+)$. Up to moving $t_0$, this implies (P3) holds for $t\in (t_0,T_+)$. In particular,
\begin{equation}
  \label{graph t0}
  \Sigma\cap\{x_3>t_0\}=\mbox{graph of a smooth function over $\pi(\Sigma\cap\{x_3>t_0\})$}\,.
\end{equation}
By \eqref{cell structure2} and \eqref{Q not t0}, we have that
\begin{equation}
\label{cell structure2 star}
\{x_3>t_0\}\cap(C_{\Omega}\backslash \Sigma)=\{x_3>t_0\}\cap\bigcup_{i=0}^N U^i\,.
\end{equation}
By the convex hull property, each component of  $\{x_3>t_0\}\cap(\bar{C}_{\Omega}\backslash \Sigma)$ must intersect $\partial C_\Omega$.  Hence, it follows from assumptions (c) and (f) that $\{x_3>t_0\}\cap(C_{\Omega}\backslash \Sigma)=\{x_3>t_0\}\cap\left(U^0\cup U^1\right)$. Hence,  as \eqref{trivial case} does not hold and $\Sigma$ defines a cell structure in $C_{\Omega}\cap \set{x_3>t_0}$
\begin{eqnarray*}
 \Sigma_{t^+}&=& \partial A_{t^+}^0\cap \partial A_{t^+}^1\,,
\\
\set{T_+>x_3> t}\cap \bar{C}_{\Omega}&=&	\set{T_+>x_3> t} \cap \left(\Sigma_{t^+}\cup {A}^0_{t^+}\cup {A}^1_{t^+}\right)\,,
\end{eqnarray*}
for every $t\in(t_0,T_+)$. This immediately implies, that after moving $t_0$ toward $T_+$ by any amount, (P5) holds for $t\in (t_0,T_+)$.  Moreover, combining this with \eqref{nabla sigma x3 near top} implies that, possibly up to further moving $t_0$ toward $T_+$, (P4) hold for $t\in(t_0,T_+)$ -- see Figure \ref{fig uit}.

	\medskip
	
\noindent {\it Step five }: We show that $G=(0,T_+)$. Suppose instead that
\[
t_1=\sup\set{ t<T_+: t\not\in G}>0\,.
 \]
We prove that $t_1\not\in G$ by showing that $[t_1,T_+)\subset G$ implies the existence of $\delta>0$ such that $(t_1-\delta,T_+)\subset G$. By continuity, it is clear that if (P1) and (P2) hold at $t=t_1$, then they hold whenever $|t-t_1|<\delta$ for some $\delta>0$. The implicit function theorem, the validity of (P1) and (P2) for $|t-t_1|<\delta$ and the fact that (P3) already holds for $t\in [t_1,T_+)$, together imply that,  up to decreasing, $\delta$, (P3) holds for $t\in(t_1-\delta, T)$. Finally, the argument used above to deduce that (P4) and (P5) hold on $(t_0,T_+)$ from the fact that (P1), (P2) and (P3) hold on $(t_0,T_+)$ can be repeated verbatim with $(t_1-\delta,T_+)$ in place of $(t_0,T_+)$.

\medskip

We have thus proved that $t_1\not\in G$: in particular, (P1)--(P5) hold for every $t\in(t_1,T_+)$, but at least one of them fails at $t=t_1>0$. We now exclude these five possibilities to reach a contradiction. This will ultimately prove that we cannot have $t_1>0$, and thus that $t_1=0$ and so $G=(0,T_+)$.  First, we show there is no infinitesimal symmetry at $t=t_1$ when $t_1>0$.
	
\medskip

\noindent {\it Proof there is no infinitesimal symmetry at $t=t_1>0$}:  It is true that
\begin{equation}\label{nosymmetry}
 \mbox{if $t_1>0$, $p\in \Sigma\cap \set{x_3=t_1}\backslash Q$ and $T_p\Sigma\neq \set{x_3=0}$, then $R_0(T_p \Sigma)\neq T_p\Sigma$}.
\end{equation}
	We argue by contradiction and suppose $R_0(T_p\Sigma)=T_p\Sigma$.  As $T_p\Sigma \neq \set{x_3=0}$, $T_p\Sigma \cap \set{x_3>0}$ is non-empty.  Hence, there is a component, $\Sigma^+$ of $\Sigma_{t_1^+}^\circ$ so that $p \in \bar{\Sigma}^+$.
	As (P1) holds for $t>t_1$, $\Sigma^+$ is regular in $\set{x_3>t_1}\cap C_{\Omega}$. Set $V'=U^0\cap \set{t_1<x_3<T_+}\subset A_{t_1^+}^0$ so $V'$ is open in $C_{\Omega}\cap \set{x_3>t_1}$. As (P4) holds for $t>t_1$, there is a connected component $V$ of $V'$ so that
	$\Sigma^+\subset\partial V$  in $C_{\Omega}\cap \set{x_3>t_1}$. Moreover, as $V'=\bigcup_{t>t_1} A_{t^+}^0$, the fact that (P5) holds for $t>t_1$ implies $R_{t_1}(V')\subset U^0$ and so $R_{t_1}(V)\cap \Sigma=\emptyset$. That is, the hypotheses of Lemma \ref{SymmetryLem} hold in $U=C_\Omega$.  Hence,
	\begin{equation}\label{xmas}
	R_{t_1}(\Sigma^+)\cap C_\Omega\subset \Sigma \mbox{ and so, as $\Sigma$ is closed, } R_{t_1}(\Sigma^+) \subset \Sigma.
	\end{equation}
	By the convex hull principle for stationary varifolds, $B\cap \Sigma^+\neq \emptyset$ and so there is a $q\in B\cap \Sigma^+$.  Observe that as $\Sigma^+\subset \Sigma_{t_1}^\circ$, $x_3(q)>t_1$. By hypotheses (e), $R_0(\Sigma_+)\cap \partial C_\Omega\subset B$ and so  \eqref{xmas} implies $$
	\set{q,R_{t_1}(q)}\subset  \pi^{-1}(\pi(q))\cap \Sigma\subset B.$$
	 If $t_1\geq \frac{1}{2}T_+$ this implies $q,R_{t_1}(q)\in B_{0^+}$ contradicting $ B_{0^+}$ being a graph.  If $t_1\in (0, \frac{1}{2}T_+) $, then $R_{t_1}(q)\in B_{0^-}$ and $x_3(R_{t_1}(q))>x_3 (R_0(q))$, a contradiction to $R_0(B_{0^+})\geq B_{0^-}$, i.e., (b). From this we conclude that \eqref{nosymmetry} holds.
	
	\medskip

\noindent {\it Proof that $t_1\not\in G$ and $t_1>0$ imply (P1) holds at $t=t_1$}:  If (P1) fails at $t=t_1$, then
 \begin{equation}
   \label{proof of P1}
    \exists\,p\in \sing(\Sigma)\cap \{x_3=t_1\}\,.
 \end{equation}
 As $\sing(\Sigma)\cap B=\emptyset$, $t_1<T_+$, and $Q$ is a finite set of points, there is $R>0$ such that $\Sigma$ is a minimal Plateau surface without boundary in $B_R(p)\backslash \set{p}$.  Moreover, by (P3) and $(t_1, T_+)\subset G$,  one has that $ \Sigma\cap B_R\cap \set{x_3>t_1}$ is a graph of locally bounded slope. Since $\bar\Theta(\Sigma,p)<2$ and $p\in\sing(\Sigma)$, Lemma \ref{RemoveSingLem} implies that $\Sigma$ is a minimal Plateau surface in $B_R(p)$ and that $p$ is a $Y$-point of $\Sigma$, with the spine of the tangent $Y$-cone $T_p\Sigma$ lying in the horizontal plane $\set{x_3=0}$. Thus, up to rotating $\Sigma$ around the $x_3$-axis,
\begin{equation}
  \label{t1t2t3} 	T_p\Sigma=H_{\theta_1}\cup H_{\theta_2} \cup H_{\theta_3}, \mbox{ where }
  |\theta_2| \leq 30, \; \theta_1=\theta_2+120, \; \theta_3=\theta_2-120;
\end{equation}  see
\begin{figure}
  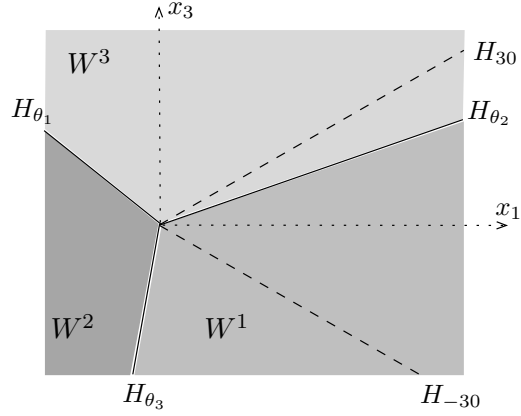\caption{{\small The half-plane $H_\theta$ is defined by a rotation of $H=H_0=\{x_3=0\,,x_1\ge0\}$ by an angle $\theta$ around the $x_2$ axis, with the convention that $H_{\theta_2}$ in this picture corresponds to $\theta_2>0$.}}\label{fig t1t2t3}
\end{figure}
	Figure \ref{fig t1t2t3}. Let $W^3$ be the region of $\Real^3\backslash T_p \Sigma$ between $H_{\theta_1}$ and $H_{\theta_2}$ and likewise let $W^2$ be the region between $H_{\theta_1}$ and $H_{\theta_3}$ and $W^1$ the region between $H_{\theta_2}$ and $H_{\theta_3}$. Appealing to Lemma \ref{CellularLem}, let $\mathcal{C}(T_p\Sigma)=\set{W^1, W^2,W^3}$ and let $U^{i_j}=\mathcal{I}_p(W^j)$ be the cells in  $\mathcal{C}(\Sigma)$ that correspond to $W^j$,  $j=1,2,3$. By Lemma \ref{CellularLem}, $\mathcal{I}_p$ is injective and so $U^{i_j}\neq U^{i_k}$ for $j\neq k$.

 We claim $\theta_2=0$, $\theta_1=120$, $\theta_3=-120$. Suppose $\theta_2>0$.  In this case, $H_{\theta_1}$ and $H_{\theta_2}$ both meet $\set{x_3>0}$, and so $W^1, W^2$ and $W^3$ all meet $\set{x_3>0}$ (this is exactly the situation depicted in Figure \ref{fig t1t2t3}).  As (P5) holds for $t\in(t_1,T_+)$, this means that $\set{U^{i_1}, U^{i_2}, U^{i_3}}=\set{U^0, U^1}$ which is impossible as the three regions must be distinct.  Hence, $\theta_2\leq 0$, see
\begin{figure}
  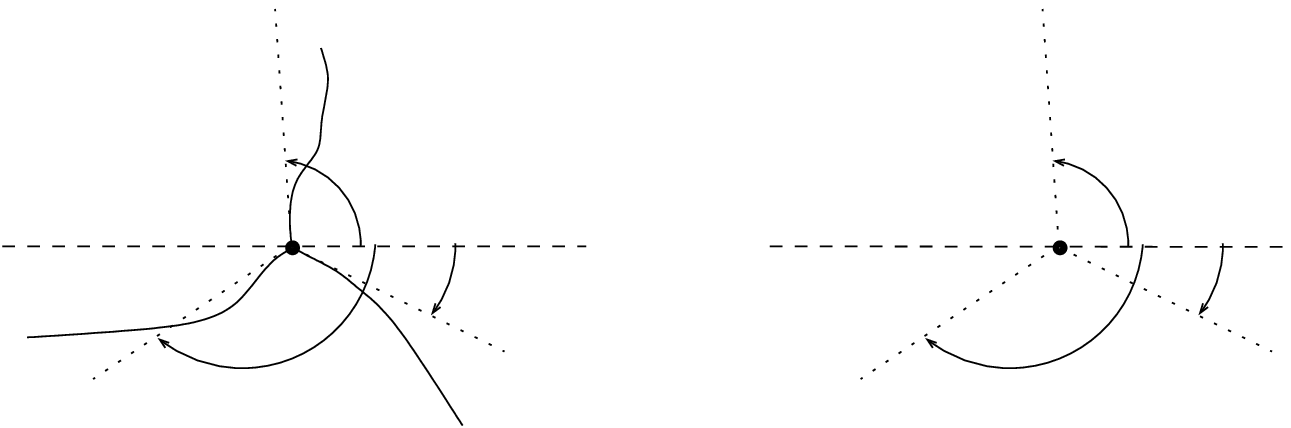\caption{{\small The situation, in a sufficiently small ball near $p$, when $\theta_2\le 0$. The validity of $R_t(A^0_{t^+})\cap\{x_3   <t\}\cap C_\Omega\subset U^0$ for every $t\in(t_1,T_+)$ implies that, if $U^{i_k}=U^0$, then $R_0(W^k)\cap\{x_3<0\}\subset W^k$.}}\label{fig ui2ui3}
\end{figure}
Figure \ref{fig ui2ui3}. In this case, only $W^2$ and $W^3$ intersect $\set{x_3>0}$ and so, as (P5) holds for $t\in (t_1,T_+)$, $\set{U^{i_2}, U^{i_3}}=\set{U^0, U^1}$. Thus, there are two cases,
\begin{eqnarray}\label{ui2ui3 cases}
  \mbox{either $U^{i_2}=U^1$ and $U^{i_3}=U^0$}\,,\qquad\mbox{or $U^{i_2}=U^0$ and $U^{i_3}=U^1$}\,.
\end{eqnarray}
The validity of (P4) for $t\in (t_1,T_+)$ implies that either in the first case of  \eqref{ui2ui3 cases} that $R_0(W^3) \cap\{x_3<0\}\subset W^3$ and in the second that $R_0(W^2) \cap\{x_3<0\}\subset W^2$.
In the first case,  $-\theta_1\ge \theta_2$ while, by \eqref{t1t2t3}, $\theta_2+120=\theta_1\le-\theta_2$ and so $\theta_2\le-60$. This contradicts $\theta_2\in[-30,0]$ and so does not occur. In the second case, $-\theta_1\le \theta_3$, and so combined with \eqref{t1t2t3} one has $0\leq \theta_1+\theta_3=2\theta_2\leq 0$ and so $2\theta_2=\theta_1+\theta_3=0$. This verifies the claim that $\theta_2=0$, $\theta_1=-120$, and $\theta_3=120$.  In particular, $R_0(T_p\Sigma)=T_p\Sigma$, however this contradicts \eqref{nosymmetry} and so (P1) holds at $t=t_1$.

\medskip
	
\noindent {\it Proof that $t_1\not\in G$ and $t_1>0$ imply that (P2)--(P5) holds at $t=t_1$}: If (P2) does not hold for $t=t_1$, then, by \eqref{P2Boundary}, there  is a point $p\in  \set{x_3=t_1} \cap \Sigma$, $p\not\in B$ such that $|\nabla_\Sigma x_3|(p)=1$ --  recall, $\Sigma\cap\{x_3=t_1\}$ consists of regular points as (P1) has already been established at $t=t_1$. Hence, $T_p\Sigma$ is vertical and so $T_p\Sigma\neq \set{x_3=0}$ and $R_0(T_p\Sigma)=T_p\Sigma$.  As this contradicts \eqref{nosymmetry}, (P2) must hold for $t=t_1$.  (P3) follows immediately from (P1), (P2) and the fact that (P3) holds for $t>t_1$.
	
 If (P4) holds for $t\in (t_1,T_+)$ but fails at $t=t_1$, one must have that
\[
\overline{R_{t_1}\big(A^0_{t_1^+}\big)}\cap \set{x_3<t_1}\cap C_{\Omega}\subset \overline{U}^0\,,
\]
holds but
\[
\overline{R_{t_1}\big(A^0_{t_1^+}\big)}\cap \set{x_3<t_1}\cap C_{\Omega}\subset U^0
\]
does not. Therefore, there is $p\in\partial U^0\cap \partial( R_{t_1}(A^0_{t_1^+}))\cap \set{x_3<t_1}\cap C_{\Omega}$. Since
\[
\partial\big( R_{t_1}(A^0_{t_1^+})\big)\cap \set{x_3<t_1}\subset R_{t_1}\big(\Sigma_{t_1^+}\big)
\]
and (P1) holds for $t\geq t_1$, we see that $p$ is a regular point of $R_{t_1}(\Sigma_{t_1^+})$.  However, as $p\in \partial U^0\cap C_{\Omega}$, we also have $p\in \Sigma$ and so applying Lemma \ref{UniqueContLem}, gives $R_{t_1}(\Sigma_{t_1^+})\subset \Sigma$ and this yields a contradiction as in the proof of \eqref{nosymmetry}.  Hence, (P4) holds at $t=t_1$.

\medskip

Finally, if (P5) fails for $t=t_1$, we can find $U^k$ with $k\ne 0,1$ such that $\bar{U}^k\cap\{x_3=t_1\}\ne\emptyset$. Up to relabeling, we can set $k=2$, and thus consider the existence of $p\in\bar{U}^2\cap\{x_3=t_1\}$. By assumption (c), $\bar{U}^2\cap (C_\Omega)_{0^+}=\emptyset$ and so $p\in C_\Omega$. Moreover, the validity of (P5) for $t>t_1$ implies that $\bar{U}^2\subset \set{x_3\leq t_1}$ and so $p\in \partial U^2\cap C_{\Omega}\subset \Sigma$. In fact, as (P1) holds at $t=t_1$,  $p\in \mathrm{int}(\Sigma)$.  Given that $\Sigma$ agrees with $\partial U^2$ near $p$, one has  $\nabla_{\Sigma} x_3(p)=0$. Hence, the strict maximum principle implies $x_3=t_1$ on $B_{r}(p)\cap \Sigma$ for some small $r>0$.  As (P1) is an open condition, there is $\delta>0$ so that $\Sigma_1=\Sigma\cap\{x_3>t_1-\delta\}$ is a regular minimal surface with boundary in $\set{x_3>t_1-\delta}\cap \bar{C}_\Omega$.  In particular, we may apply the standard unique continuation principle for smooth minimal surfaces to $\Sigma_1$ and the connected surface $\Sigma_2=\Omega\times\set{t_1}$ to see that $\Sigma_2\subset \Sigma_1\subset \Sigma$. This contradicts \eqref{trivial case} and so conclude (P5) holds at $t=t_1$.  Hence, if $t_1\not\in G$ and $t_1>0$, then $t_1\in G$. and so $t_1=t_0=0$ and $G=(0,T_+)$.
	
\medskip
	
	\noindent {\it Step Six}: To conclude the proof we first observe that $G=(0,T_+)$ immediately implies (i) and (ii) hold.   We are left to show conclusion (iii), namely the existence of $\epsilon>0$ so $\Sigma$ is a minimal Plateau surface in $\set{x_3>-\epsilon}$. As $G=(0,T_+)$, $\Sigma\cap \set{x_3>0}$ is a regular minimal surface with boundary, so we need only check that $Q\cap\{|x_3|<\epsilon\}=\emptyset$ for a suitable $\epsilon>0$. As $Q$ is a finite set contained in $C_\Omega$, we only need to check that if $p\in \{x_3=0\}\cap \Sigma\cap C_{\Omega}$, then $p\not\in Q$. Clearly, there is an $r>0$ so that $\Sigma$ is a minimal Plateau surface without boundary in $B_r(p)\setminus\{p\}$. Obviously $\bar\Theta(\Sigma,p)<2$, and since $G=(0,T_+)$, $\Sigma\cap\{x_3>0\}$ is a graph of locally bounded slope. By Lemma \ref{RemoveSingLem}, $p$ is either a regular or a $Y$-point, so it does not belong to $Q$, as claimed.
\end{proof}

\section{Rigidity for minimal Plateau surfaces in a slab}\label{section rigidity slab}
In this section we prove the rigidity of minimal Plateau surfaces in a slab with symmetric convex boundary.  We begin by proving topological rigidity in Proposition 3.1, which consists in showing that such minimal Plateau surfaces are {\it simple} bi-graphs. Combined with the previous section and a moving planes argument of Pyo \cite{Pyo} this will complete the proof Theorem \ref{MainThm}. This topological rigidity is an extension of an argument of Ros \cite{Ros} to minimal Plateau surfaces.  Note that Ros's argument uses the Lopez-Ros deformation \cite{LopezRos} and so is special to $\Real^3$.
\begin{prop}[cf. Theorem 1 of \cite{Ros}] \label{LopezRosProp}
  Let $\Sigma\subset\set{|x_3|\leq 1}$ be a connected minimal Plateau bi-graph with $\partial \Sigma=\Gamma\times \set{\pm 1}$ where $\Gamma\subset \Real^2$ is convex.  If $\Sigma$  is symmetric across $\set{x_3=0}$ and $\Sigma$ defines a cell structure in $\set{|x_3|<1}$, then $\Sigma$ is simple.
\end{prop}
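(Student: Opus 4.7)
\emph{Plan of proof.} I would adapt Ros's argument \cite{Ros} via the Lopez--Ros deformation \cite{LopezRos}, with modifications to accommodate the $Y$-singularities of a Plateau bi-graph. The starting observation is that the bi-graph and symmetry hypotheses let one decompose
\[
\Sigma = S^+ \cup R_0(S^+) \cup D,
\]
where $S^+ = \overline{\Sigma \cap \{x_3 > 0\}}$ is a connected smooth compact minimal surface embedded as a graph over its projection $\pi(S^+) \subset \overline{\Omega}$, and $D \subset \{x_3 = 0\}$ is a (possibly empty) union of horizontal floating disks bounded by curves in $\Sigma_Y$. By Remark~\ref{rmk PS from c1a to analytic}, the boundary $\partial S^+$ consists of the convex Jordan curve $\Gamma \times \{1\}$ on top and a finite disjoint union of smooth Jordan curves $C_1, \dots, C_k \subset \{x_3 = 0\}$ on the bottom; each $C_i$ is either a smooth waist curve (along which $S^+$ meets $\{x_3=0\}$ orthogonally by symmetry) or a $Y$-curve (along which $S^+$ meets $\{x_3=0\}$ at a $60^\circ$ angle and a floating disk of $D$ is attached). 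Simplicity of $\Sigma$ reduces to showing $k \leq 1$: if $k=0$ then $\Sigma\cap\{x_3=0\}=\emptyset$, and if $k=1$ the connectedness of $\Sigma$ and the cell-structure hypothesis force $C_1$ to bound a unique topological disk in $\{x_3=0\}$.

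I would argue by contradiction, assuming $k \geq 2$. Parametrize $S^+$ conformally by a Riemann surface $M$ with Weierstrass data $(g,\eta)$, and consider the Lopez--Ros family $(g_\lambda,\eta_\lambda) = (\lambda g, \lambda^{-1}\eta)$, $\lambda>0$. Since $g\,\eta$ is preserved, the induced candidate immersions $X_\lambda \colon M \to \R^3$ carry the same height function $x_3$ as $S^+$, so the plane $\{x_3=1\}$ containing the top boundary and the plane $\{x_3=0\}$ containing the bottom boundary are both preserved. Using the $R_0$-symmetry one extends the deformation across $\{x_3=0\}$ to candidate deformed Plateau bi-graphs $\Sigma_\lambda$. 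The core of Ros's method is (i) to show, using the convexity of $\Gamma$ and the $R_0$-forced vanishing of the vertical flux across each $C_i$, that the periods of $(g_\lambda,\eta_\lambda)$ close up for all $\lambda>0$, producing a real-analytic one-parameter family $\Sigma_\lambda$; and (ii) to analyze the limit $\lambda \to 0^+$, in which $\Sigma_\lambda$ flattens to a horizontal multi-graph over $\Omega$ of multiplicity determined by the number of boundary components of $S^+$. If $k\geq 2$, the limiting multiplicity is $\geq 3$ over some part of $\Omega$, and by continuity the bi-graph property (which is preserved along $\lambda$) is violated for all sufficiently small $\lambda>0$, giving the contradiction.

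\emph{Main obstacle.} The principal technical difficulty is reconciling the Lopez--Ros deformation with the $Y$-angle condition. At a $Y$-curve the $60^\circ$ meeting angle with $\{x_3=0\}$ constrains the Gauss map $g$ to take values on a specific latitude circle of the Riemann sphere (with $|g|$ equal to a fixed constant), and this circle is \emph{not} preserved by the real scaling $g \mapsto \lambda g$ for $\lambda\neq 1$. The naive Lopez--Ros deformation therefore does not carry Plateau bi-graphs to Plateau bi-graphs, and to close the argument one must either restrict the deformation to the subregion of $S^+$ bounded by waist curves (handling the floating disks and $Y$-pieces by a separate surgery or cutoff), or compose $g \mapsto \lambda g$ with an auxiliary phase rotation that restores the latitude constraint at the cost of additional period conditions. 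I expect the bulk of the proof to lie in controlling this interaction between the scaling in the Weierstrass data and the Plateau angle condition; once this is handled, the degeneration contradiction proceeds essentially as in Ros's smooth case.
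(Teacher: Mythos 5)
Your outline assembles the right ingredients (decomposition into the upper graph $\Sigma_+=\overline{\Sigma\cap\set{x_3>0}}$, verticality of the flux from the constant contact angle, the Lopez--Ros deformation), but it has a genuine gap exactly at the point you flag as the ``main obstacle,'' and the fixes you propose do not work. The paper does not run a one-parameter family $(\lambda g,\lambda^{-1}\eta)$ with a degeneration analysis as $\lambda\to 0^+$; it applies the deformation \emph{once}, with the specific factor $\lambda=\gamma_0^{-1}$ where $\gamma_0=\sqrt{3}/3=|G|$ on the free boundary components. The point is that the $120^\circ$ contact angle forces $|G|$ to be \emph{constant} on $\partial_-M$, so this single rescaling turns the $Y$-type contact into \emph{orthogonal} contact with $\set{x_3=0}$; reflecting the deformed surface $\Upsilon_+$ across $\set{x_3=0}$ then yields a smooth (a priori only immersed) minimal surface bounded by convex curves in $\set{x_3=\pm1}$, which is embedded by Ekholm--White--Weinholtz \cite{EWW}, and one concludes by quoting Ros's free-boundary result \cite{Ros} that $\Upsilon_+$, hence the conformally equivalent $\Sigma_+$, is an annulus. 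In other words, the angle condition you view as an obstruction to preserving the Plateau structure along the family is precisely what is exploited to \emph{remove} the singular structure in one step; by contrast, your proposed repairs fail: composing $g\mapsto\lambda g$ with a phase rotation cannot restore the latitude constraint (a unimodular factor does not change $|g|$), and ``restricting to the subregion bounded by waist curves'' or a surgery/cutoff is not an argument. Your limit step is also unsupported: the bi-graph and embeddedness properties are \emph{not} known to persist along the Lopez--Ros family (even for a single deformed surface the paper must invoke \cite{EWW} to recover embeddedness), so the claimed contradiction from ``limiting multiplicity $\geq 3$'' does not follow as stated.

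A second, related gap is that you never really use the cell structure hypothesis. In the paper it is used at the outset to show that $\Sigma\cap\set{x_3=0}$ is either entirely contained in $\reg(\Sigma)$ or entirely equal to $\sing(\Sigma)$; i.e., a mixture of orthogonal waist curves and $Y$-curves (which your decomposition explicitly allows) cannot occur for a cellular bi-graph -- compare Figure \ref{fig regularbi}-(b). This dichotomy is essential: in the regular case one quotes \cite{Ros} directly with no deformation, while in the singular case it guarantees that $|G|$ takes the \emph{same} constant value $\gamma_0$ on every free boundary component, so that a single choice of the Lopez--Ros parameter normalizes all of them simultaneously. With mixed contact angles no single rescaling works, and your outline offers no substitute, so as written the argument cannot be completed.
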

\begin{proof}
	Let $\Sigma_+=\bar{\Sigma}_{0^+}^\circ=\overline{\Sigma \cap \set{x_3>0}}$.  The symmetry of $\Sigma$ and the fact that it is a bi-graph implies that $\Sigma_+$ is a regular minimal surface with boundary whose interior is a graph over $\set{x_3=0}$.  In particular, as $\Sigma$ is connected, $\Sigma_+$ is a connected planar domain.
	One readily checks that at the points of $\set{x_3=0}\cap \partial\Sigma_+$, $\Sigma$ either intersect $\set{x_3=0}$ orthogonally (if the point is a regular point of $\Sigma$) or intersect at an angle of $120^\circ$ (if the point is a $Y$-point of $\Sigma$). There must exist such points as $\Sigma$ is connected. In fact,
	as $\Sigma$ defines a cell structure in $\set{|x_3|<1}$ one must have either $\set{x_3=0}\cap \Sigma \subset \reg(\Sigma)$ or $\set{x_3=0}\cap \Sigma = \sing(\Sigma)$ -- see  Figure \ref{fig regularbi}-(b).  That is, either $\Sigma$ is regular or every component of $\Sigma_+\cap \set{x_3=0}$ consists of $Y$-points and bounds a disk in $\Sigma$.
	
	 If $\Sigma$ is regular, then this means $\Sigma_+$ solves the  free boundary Plateau problem for the data $(\Gamma_+, \set{x_3=0})$ in the sense of \cite{Ros} and so is an annulus by \cite[Corollary 3]{Ros}. It immediately follows that $\Sigma$ is also an annulus and so is simple.
	
	 If $\Sigma$ is singular, then  the constant contact angle with $\set{x_3=0}$, continues to imply that every non-null homologous loop in $\Sigma_+$ has vertical flux.  Indeed, let $\sigma$ be an (oriented) component of $\partial \Sigma_+$ that meets $\set{x_3=0}$ at $120^\circ$.  Let $\nu_\sigma$ is the outward conormal to $\sigma$ in $\Sigma_+$ and let $\mathbf{n}_{\sigma}$ be the outward normal to $\sigma$ in $\set{x_3=0}$.   Clearly, $\nu_\sigma(p)=-\frac{1}{2}\mathbf{n}_{\sigma}(p)-\frac{\sqrt{3}}{2} \mathbf{e}_3$ and so,
	 $$
	 \mathrm{Flux}(\sigma)= \int_{\sigma} \nu_{\sigma} d\mathcal{H}^1=-\frac{1}{2} \int_{\Sigma} \mathbf{n}_{\sigma} d\mathcal{H}^1-\frac{\sqrt{3}}{2} \mathcal{H}^1(\sigma) \mathbf{e}_3=-\frac{\sqrt{3}}{2} \mathcal{H}^1(\sigma) \mathbf{e}_3
	 $$
	 where the last equality follows from applying the divergence theorem in $\set{x_3=0}$. As any closed curve in $\Sigma_+$ is homologous to some linear combination of the components of $\set{x_3=0}\cap \partial \Sigma_+$, it follows that $\Sigma_+$ has vertical flux for each closed curve.
	
	 To complete the proof we use the Lopez-Ros deformation \cite{LopezRos} to reduce to the regular case.  To that end consider the Weierstrass data $(M, \eta, G)$ of $\Sigma_+$.  Here $M$ is the underlying Riemann surface structure of $\Sigma_+$, $\eta$ is the (holomorophic) height differential (i.e., the complexification of $dx_3$) and $G$ is the meromorphic function given by the stereographic projection of the Gauss map (of the outward normal).  This data produces a conformal embedding  of $\Sigma_+$ by $M$
	 $$\mathbf{F}:M\to \Sigma^+\subset \Real^3$$ given by
	 $$
	 \mathbf{F}(p)=\mathrm{Re} \int_{p}^{p_0}\left(\frac{1}{2}(G^{-1} -G), \frac{i}{2}(G^{-1}+G), 1  \right)\eta.
	 $$  Let $\partial_+ M$ be the component of $\partial M$ sent to $\Gamma\times \set{1}$ and let $\partial_-M=\partial M\backslash \partial_+M$ be the components sent to $\Sigma_+\cap \set{x_3=0}$.   As $\Sigma_+$ meets $\set{x_3=0}$ at $120^\circ$, one has $|G|=\gamma_0=\frac{\sqrt{3}}{3}> 0$, is constant on $\partial_-M$. As observed by Lopez-Ros \cite{LopezRos}, because the flux of $\Sigma_+$ is vertical, the Weierstrass data $(M, \eta, \gamma_0^{-1} G)$ produces a conformal immersion $\mathbf{F}':M\to \Upsilon_+\subset \Real^3$ of a new (possibly immersed) minimal surface with boundary $\Upsilon_+$.  The properties of the Lopez-Ros deformation ensure that $\partial \Upsilon_+\subset \set{x_3=1}\cup \set{x_3=0}$ and $\mathbf{F}(\partial_+M)= \partial \Upsilon_+\cap \set{x_3=1}$ is convex -- see \cite[Lemma 2]{PerezRos} while $\Upsilon_+$ meets $\set{x_3=0}$ orthogonally.  It follows that the set $\Upsilon=\Upsilon_+\cup R_0 (\Upsilon_+)$ given by taking the union of $\Upsilon_+$ with its reflection across $\set{x_3=0}$ gives a connected smooth minimal (possibly immersed) surface whose boundaries are convex curves lying on $\set{x_3=\pm 1}$.   By a result of Ekholm, Weinholtz and White \cite{EWW}, $\Upsilon$ is embedded and so $\Upsilon_+$  solves the  free boundary Plateau problem for the data $(\Upsilon_+, \set{x_3=0})$ in the sense of \cite{Ros} and so, as before, is an annulus by \cite[Corollary 3]{Ros}.  Hence, $\Sigma_+$ is an annulus and so $\Sigma$ is also simple in the singular case.	
\end{proof}

We are now in a position to prove Theorem \ref{MainThm}.  For brevity we use Proposition \ref{LopezRosProp} to allow us to appeal to a result of Pyo \cite{Pyo} to handle the case where the boundaries are circles, however, one could also work directly with moving planes argument used in \cite{Pyo} and avoid Proposition \ref{LopezRosProp}.

\begin{proof}[Proof of Theorem \ref{MainThm}]
 Let $\Omega\subset \Real^2$ be the convex open domain so $\Gamma=\partial \Omega$. We first prove that $\Sigma$ is a simple minimal Plateau bi-graph, which is symmetric by reflection through $\{x_3=0\}$. This is immediate if $\Sigma$ is disconnected. Indeed, in that case, by the convex hull property we find that $\Sigma\subset \set{|x_3|=\pm 1}$, and so $\Sigma=\Omega_-\cup \Omega_+$ where $\Omega_\pm =\Omega\pm \mathbf{e}_3$. We thus assume that $\Sigma$ is connected, and claim that $\Sigma$ satisfies the hypotheses of Corollary \ref{CylCor} in $C_{\Omega}$ with $B=\Gamma_-\cup \Gamma_+$ and $Q=\emptyset$. Indeed, the only item that is not immediate is $\Sigma\backslash B\subset C_{\Omega}$. but this follows from the maximum principle of Solomon-White \cite{solomonwhite} applied to $V_\Sigma$, the varfiold associated to $\Sigma$, and appropriate catenoidal barriers.  Hence, by Corollary \ref{CylCor}, $\Sigma$ is a minimal Plateau bi-graph that is symmetric with respect to reflection across $\set{x_3=0}$.  As $\Sigma$ is connected we may then appeal to Proposition \ref{LopezRosProp} to see that $\Sigma$ is simple.

 Finally, we treat the case that $\Gamma$ is a circle.   To that end,  let $\Sigma_+=\overline{\Sigma\cap \set{x_3>0}}$.  As already observed this set is a regular minimal annulus with one boundary a circle in the plane $\set{x_3=1}$ and the other boundary meeting $\set{x_3=0}$ in a constant contact angle (either $90^\circ$ or $120^\circ$).  It now follows from the main result of \cite{Pyo} that $\Sigma_+$ is a piece of a catenoid.  As such, $\Sigma$ is either a subset of $\Cat$ or of $\Cat_Y$ depending on its regularity.
\end{proof}

\section{Global rigidity of minimal Plateau surfaces with two regular ends}\label{section global rigidity}
In this section we prove Theorem \ref{GlobalThm}.  To do so we first establish certain elementary properties of the ends -- specifically that asymptotically they are parallel and have  equal, but opposite, logarithmic growth rate -- this is entirely analogous to what is done in the regular case.  As a consequence, we may appeal to Theorem \ref{CylThm} to conclude that $\Sigma$ is, after rotation and vertical translation, symmetric with respect to reflection across $\set{x_3=0}$ and that $\Sigma_+=\Sigma\cap \set{x_3\geq 0}$ is a graph of locally bounded slope.  We conclude the proof by using complex analytic arguments -- specifically a variant of the Lopez-Ros deformation \cite{LopezRos} -- to reduce to the case already considered by Schoen \cite{SchoenSymmetry}.

\medskip

We remark that one could conclude by applying the moving planes method with planes moving orthogonally to $\set{x_3=0}$. Indeed, thanks to Theorem \ref{MainThm}, $\Sigma\cap\{x_3>0\}$ is a smooth graph which meets $\{x_3=0\}$ at a constant angle (and $\Sigma\cap\{x_3<0\}$ is just the reflection of $\Sigma\cap\{x_3>0\}$ along $\{x_3=0\}$): therefore we can apply the same  ``horizontal'' moving planes arguments as in \cite{SchoenSymmetry} and \cite{Pyo} to $\Sigma\cap\{x_3>0\}$, and give a direct PDE proof of its rotational symmetry which entirely avoids complex analytic methods.

\begin{defn}
  \label{def regular end} Following \cite{SchoenSymmetry}, we say that a minimal Plateau surface $\Sigma\subset\Real^3$ {\bf has two regular ends}  if there is a compact set $K\subset \Real^3$ so that
$$
\Sigma\backslash K= \Gamma_1\cup \Gamma_2
$$
where there are rotations $S_1, S_2\in SO(3)$ and a radius $\rho>0$ so for $i=1,2$,
$$
S_i \cdot \Gamma_i=\set{(\mathbf{y}, u_i(\mathbf{y})): \mathbf{y}\in \Real^2\backslash \bar{B}_{\rho}}
$$
and
$$
u_i(\mathbf{y})= a_i \log |\mathbf{y}|+ b_i+ \mathbf{c}_i\cdot \frac{\mathbf{y}}{|\mathbf{y}|^2}+R_i(\mathbf{y})
$$
where
$$
|R_i(\mathbf{y})|+|\mathbf{y}| |\nabla R_i(\mathbf{y})|\leq C |\mathbf{y}|^{-2}.
$$
Let $P_i=S_i (\set{x_3=0})$, be the planes the $\Gamma_i$ are graphs over.  One readily checks that $\lim_{\rho \to 0} \rho \Gamma_i=P_i$, that is, each $\Gamma_i$ is asymptotic to the plane $P_i$.
\end{defn}

\begin{lem}\label{GlobalAuxLem}
	One has $\lim_{R\to \infty} \frac{\mathcal{H}^2(\Sigma\cap B_R)}{\pi R^2}=2$.  In fact, one has $P_1=P_2=P$ and $\lim_{\rho\to 0} \rho\Sigma=P$ in $C^\infty_{loc}(\Real^3\backslash \set{0})$.  If $\Sigma$ is disconnected then $\Gamma=P_1'\cup P_2'$ where $P_i'$ are disjoint planes parallel to $P$.
\end{lem}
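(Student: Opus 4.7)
The plan is to combine a direct area computation at infinity, a flux identity derived from stationarity of $V_\Sigma$, and a monotonicity-plus-classification argument for the disconnected case. Throughout I use that $V_\Sigma$ is a stationary integral $2$-varifold without boundary in $\R^3$, which follows from \eqref{minimal PS}.

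First, from the expansions in Definition \ref{def regular end} one has $|\nabla u_i| = O(|\mathbf{y}|^{-1})$, so a direct area computation on the graph gives $\mathcal{H}^2(\Gamma_i \cap B_R) = \pi R^2 + O(\log R)$; adding the bounded contribution of $\Sigma\cap K$ yields the density limit $2$. Next, testing the first variation against $X = \chi_R\, e$ for a constant $e \in \R^3$ and a radial cutoff $\chi_R \to \mathbf{1}_{B_R}$ gives, for a.e.\ large $R$,
\[
\int_{\Sigma\cap\partial B_R}\nu\cdot e\, d\mathcal{H}^1 = 0,
\]
where $\nu$ is the outward conormal of $\Sigma\cap B_R$. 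For such $R$, $\Sigma\cap\partial B_R$ splits into two Jordan curves $\gamma_i(R)$ coming from the two ends, and the asymptotic expansion yields the standard catenoidal-flux limit
\[
\int_{\gamma_i(R)} \nu\, d\mathcal{H}^1 \;\longrightarrow\; 2\pi\, a_i\, N_i
\]
for a consistently chosen unit normal $N_i$ to $P_i$. Hence $a_1 N_1 + a_2 N_2 = 0$. Whenever at least one $a_i\neq 0$, this forces $N_1\parallel N_2$ and, since both planes pass through the origin, $P_1 = P_2 =: P$, together with $a_1 + a_2 = 0$ once a common sign for the normal is fixed.

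The remaining case $a_1 = a_2 = 0$ (two truly planar ends in the connected regime) is the main obstacle. To exclude $P_1\neq P_2$ here, I would either compute a higher-order flux against a Killing rotation field, which captures the dipole coefficients $\mathbf{c}_i$ and shows that no non-parallel configuration is compatible with stationarity and the cell structure, or use a direct maximum-principle/blowdown comparison: if $P_1\neq P_2$ the rescaled varifolds would have $V_{P_1}+V_{P_2}$ as their unique blowdown, and Allard regularity together with the cell-structure assumption rules this out for a connected minimal Plateau surface. Once $P_1 = P_2 = P$ is established, $C^\infty_{\mathrm{loc}}$ convergence of $\rho\Sigma$ to $P$ on $\R^3\setminus\{0\}$ follows because the rescaled graphs $\rho\,u_i(\mathbf{y}/\rho) = \rho a_i\log(|\mathbf{y}|/\rho) + \rho b_i + O(\rho^2/|\mathbf{y}|)$ and their derivatives tend to $0$ uniformly on compact subsets of $P\setminus\{0\}$, while the compact piece $\Sigma\cap K$ rescales into $B_{\rho R_0}$ and leaves any fixed annulus.

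Finally, for the disconnected case, every connected component of $\Sigma$ is noncompact (by the convex-hull property of stationary varifolds) and thus contains at least one of the two regular ends; so $\Sigma$ has exactly two components, each a single-end minimal Plateau surface. The density computation of the first paragraph, applied componentwise, gives density $1$ at infinity for each component. The monotonicity formula for $V_\Sigma$ and the density values \eqref{density for minimal plat surf} then force the density at each point of a component to equal $1$, ruling out $Y$- and $T$-points; the equality case of monotonicity from every basepoint identifies each component with a plane. These two planes cannot meet transversally (a line of transverse intersection would produce tangent cones outside $\mathcal{K}=\{P,H,Y,T\}$), so they are parallel, and by their asymptotic behavior both parallel to $P$, completing the proof.
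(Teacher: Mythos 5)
There is a genuine gap, and it sits exactly where you flagged it: the case $a_1=a_2=0$ with $P_1\neq P_2$. Neither of your proposed fixes can close it. Any balancing identity obtained by testing stationarity against a Killing field (translations or rotations, i.e.\ flux or torque) is satisfied by the multiplicity-one varifold $V_{P_1}+V_{P_2}$ associated to \emph{any} transverse pair of planes through the origin, and hence also by configurations with two planar ends $a_1=a_2=0$, arbitrary $b_i$, $\mathbf{c}_i$; so no first-variation identity, at any ``order'', can distinguish the non-parallel configuration from the parallel one. The obstruction is not variational but local-structural, and your second alternative misidentifies the mechanism: Allard's theorem gives nothing at the points where the two sheets would cross (there the density is $2$ and the tangent cone is a transverse union of planes), and the cell-structure hypothesis is not what is needed. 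The paper's argument is much more elementary: if $P_1\neq P_2$, pick $q\in\partial B_2\cap P_1\cap P_2$; since $\rho\,\Gamma_i\to P_i$ in $C^1$ away from $0$, for $\rho$ small the rescaled ends are $C^1$-small graphs over the two transverse disks $B_1(q)\cap P_i$, hence they intersect transversally along a curve. At such an intersection point $\rho\Sigma$ (equivalently $\Sigma$) would have tangent cone equal to a transverse union of two planes, which is not in $\mathcal{K}=\{P,H,Y,T\}$, contradicting that $\Sigma$ is a Plateau surface. Note this argument needs no flux computation at all, so your whole first-variation step (which correctly handles the case where some $a_i\neq0$) is dispensable; as written, your proof establishes $P_1=P_2$ only in that sub-case.

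The remaining pieces of your proposal are sound and essentially coincide with the paper's route: the area expansion of the graphs gives the density limit $2$; once $P_1=P_2=P$, the rescaled expansions plus elliptic estimates give $\rho\Sigma\to P$ in $C^\infty_{loc}(\R^3\setminus\{0\})$; and in the disconnected case the convex-hull property forces exactly two noncompact components, each with density $1$ at infinity, so the monotonicity formula (equality case) makes each component a plane parallel to $P$. To repair the proof, replace the $a_1=a_2=0$ discussion with the transverse-intersection/tangent-cone argument above (which in fact covers all cases of $P_1\neq P_2$ uniformly).
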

\begin{proof}
	It is clear from the definition of regular end that $\lim_{\lambda\to 0} \lambda \Sigma= P_1 \cup P_2$
 in $C^1(\Real^3\backslash \set{0})$.  This proves the first claim.
Suppose that $P_1\neq P_2$ as both $P_1$ and $P_2$ are planes through the origin this means that there is a point $q\in \partial B_2\cap P_1\cap P_2$ so that $D_i=B_1(q)\cap P_i$ are two disks that meet transversely along a line segment.  The convergence of $\rho\Gamma_i$ to $P_i$ as $\rho\to 0$.  Implies that for $\rho$ very small $D_i'(\rho)=\rho \Gamma_i \cap B_{1}(q)$ is a graph over $D_i$ with small $C^1$ norm and so $D_1'(\rho)$ meets $D_2'(\rho)$ transversely along a curve in small tubular neighborhood of $D_1\cap D_2$.  This means that $\rho \Sigma$ is not a Plateau surface in $B_1$ (as the infinitesimal model is the transverse union of two planes) and so this cannot occur under the hypotheses of Theorem \ref{GlobalThm}.  Hence, $P_1=P_2=P$.  The nature of the convergence and standard elliptic regularity implies the convergence may be taken in $C^\infty_{loc}(\Real^3\backslash \set{0})$.
	
	Finally, if $\Sigma$ is disconnected, then, as there are no compact minimal Plateau surfaces without boundary, there are exactly two connected components, $\Sigma_1$ and $\Sigma_2$ of $\Sigma$ corresponding to the ends $\Gamma_1$ and $\Gamma_2$.  Clearly, each $\Sigma_i$ is a minimal Plateau surface and $\lim_{\rho\to 0} \rho\Sigma_i =P_i=P$.  By the monotonicity formula this implies each $\Sigma_i$ is a plane that is parallel to $P$ by definition.
\end{proof}

\begin{proof}[Proof of Theorem \ref{GlobalThm}]
If $\Sigma$ is disconnected, then Lemma \ref{GlobalAuxLem} implies $\Sigma$ is a pair of disjoint parallel planes and we are done. If $\sing(\Sigma)=\emptyset$, then $\Sigma$ is a smooth minimal surface and so \cite{SchoenSymmetry} applies and we are also done.
  As such we may assume $\Sigma$ is connected and $\sing(\Sigma)\neq \emptyset$.  In this, case up to an ambient rotation we may assume the the unique tangent plane at infinity, $P$,  given by Lemma \ref{GlobalAuxLem} is $P=\set{x_3=0}$. Let $u_i$ be the functions with the given asymptotics for the ends $\Gamma_i$. Note that even though $P_1=P_2=P$, there is still a freedom in the choice of the rotations $S_i$.  For concreteness, choose the same rotation for both ends.  As a consequence, by vertically translating $\Sigma$ appropriately, we may assume $b_1+b_2=0$.

It follows from standard calculations (e.g., those in \cite{SchoenSymmetry}) that the flux of each $\Gamma_i$ is vertical.  In fact, if $\sigma_i$ is an appropriately oriented choice of generator for the homology of the annulus $\Gamma_i$, then
$$
\mathrm{Flux}(\sigma_i)= \int_{\sigma_i} \nu_{\sigma_i} d\mathcal{H}^1=2\pi a_i\mathbf{e}_3.
$$
Hence, by the balancing properties of the flux -- which hold for minimal Plateau surfaces as they follow from \eqref{minimal PS} -- one has $2\pi a_1+2\pi a_2=0$.  Up to relabelling, one may assume $a_1\geq 0 \geq a_2=-a_1$.  In fact, by the strong half-space theorem \cite{HoffmanMeeks},  $a_1>0>a_2=-a_1$.

Take $R>1$ large and let $\Sigma_R=\Sigma\cap \bar{C}_{R}$ be the closed cylinder of radius $R$ centered on the $x_3$-axis.  Our assumptions on $\Sigma$ and the properties of the  ends imply that, for any $\epsilon>0$, there is an $R_\epsilon>0$ large so that, for $R>R_\epsilon$, $\Sigma_R-\epsilon \mathbf{e}_3$ satisfies the hypotheses of Theorem \ref{CylThm}.  It follows that $\Sigma\cap \set{x_3>\epsilon}$ and $\Sigma\cap \set{x_3<-\epsilon}$ are both graphs over the plane $\set{x_3=0}$ and each is $\epsilon$ close to reflection across $\set{x_3=0}$ of the other.  Taking $\epsilon\to 0$, it follows that $\Sigma\backslash \set{x_3=0}$ consists of two graphical components and is symmetric with respect to reflection across $\set{x_3=0}$.

To complete the proof one considers $\Sigma_+=\overline{\Sigma\cap \set{x_3>0}}$.  As $\sing(\Sigma)\neq \emptyset$, $\Sigma_+$ is a surface with one catenoidal end that meets $\set{x_3=0}$ along one boundary curve with constant contact angle equal to $120^\circ$. Observe that as $\Sigma_+$ has a catenoidal end, the underlying Riemann surface structure of $\Sigma_+$ is $M\backslash \set{p_0}$ where $M$ is a compact Riemann surface with boundary and $p_0\not\in \partial M$. Let  $(M\backslash \set{p_0}, \eta, G)$ be Weierstrass data for $\Sigma_+$ so $\eta$,  is the height differential, and $G$, the stereographic projection of the Gauss map of the outward pointing normal.   As $\Sigma_+$ has a catenoidal end, $\eta$ and $G$ both extend meromorphically to $M$ with $\eta$ having a simple pole at $p_0$ and $G$ a simple zero.  Moreover, as $\Sigma_+$ meets $\set{x_3=0}$ at $120^\circ$,  $|G|=\gamma_0=\frac{\sqrt{3}}{3}> 0$ on $\partial M$.   As in the proof of Proposition \ref{LopezRosProp}, the constant contact angle implies that the flux over any closed loop in $\Sigma_+$ is vertical.  Hence, by \cite{LopezRos}, the Weierstrass data $(M\backslash \set{p_0}, \eta, \gamma_0^{-1} G)$ parameterizes a new (possibly immersed) minimal surface with boundary $\Upsilon_+$ and this surface also has a regular end asymptotic to a vertical catenoid.  Moreover, $\partial \Upsilon_+\subset \set{x_3=0}$ and, as the boundary of $\Upsilon_+$ is parameterized by $\partial M$, the choice of Weierstrass data ensures $\Upsilon_+$ meets $\set{x_3=0}$ orthogonally.  It follows that the set $\Upsilon=\Upsilon_+\cup R_0 (\Upsilon_+)$ given by taking the union of $\Upsilon_+$ with its reflection across $\set{x_3=0}$ gives a connected smooth minimal (possibly immersed) surface with two regular ends. As \cite{SchoenSymmetry} applies to immersed minimal surfaces, it follows that $\Upsilon$ is a vertical catenoid. As the Lopez-Ros deformation of a vertical catenoid is just a reparamaterization of the original catenoid, it follows that $\Sigma_+$ is also a subset of a vertical catenoid.  From this one immediately concludes that $\Sigma$ is a $Y$-catenoid.
\end{proof}

\section{Further remarks and open questions}\label{Questions}
We conclude with some further remarks and questions about minimal Plateau surfaces in slabs. First of all, we observe how essential to the proof of Theorem \ref{MainThm} is the assumption that at each point in $\Gamma^\pm=(\pa\Om)\times\{\pm1\}$ the surface $\Sigma$ is locally diffeomorphic to half-planes. If relaxing this assumption we may have more rigidity cases (for example, the union between a catenoid bounded by two circles, and one or both the disks bounded by the two circles, would be admissible if in the definition of Plateau minimal surfaces we relax the notion of boundary point to the case when the surface is locally diffeomorphic to a finite union of half-planes) and indeed our argument immediately breaks down. This is something we explore more thoroughly in \cite{VarifoldPaper}, and which motivates the following question:

\begin{ques}
	Is it possible to find a circle $\Gamma$ in $ \set{x_3=0}$ so that if $\Gamma_\pm =\Gamma\pm \eE_3\subset \set{x_3=\pm 1}$, then there is a minimal Plateau surface $\Sigma$ in $\Real^3\backslash \Gamma_-\cup \Gamma_+$ which does not possess  rotational symmetry?
\end{ques}
A plausible candidate surface would be to desingularize the union of an appropriately scaled pieces of ${\rm Cat}$ and ${\rm Cat}_Y$.  Less clear is whether the orientability condition is necessary.  This motivates the following questions:
\begin{ques}
	Fix two curves $\Gamma_0$ and $\Gamma_1$ in parallel planes -- not necessarily convex.  Is there a minimal Plateau surface $\Sigma$ with $\partial \Sigma=\Gamma_0\cup \Gamma_1$ so that $\Sigma$ does not have an associated cell structure?  Even if such examples exist for general choices of curves, does the conclusion of Theorem \ref{MainThm} still hold? I.e., is the cell condition unnecessary in the convex or circular case?
\end{ques}


Theorem \ref{MainThm} applies to ``unstable" minimal Plateau surfaces as well as to the physical ``stable" ones.  It would be interesting to rigorously produce examples of these sorts examples for large classes of curves.  One approach would be to develop a min-max theory in this setting.
\begin{ques}
	Can one produce unstable singular minimal Plateau surfaces that span pairs of convex curves?
\end{ques}
 Alternatively, one could hope to develop a degree theory analogous to the theory developed by Meeks and White to study the space of minimal annuli spanning a pair of convex curves \cite{MWannuli,MWcommhelv}.  In particular, they show that generic pairs of convex curves are spanned by either no minimal annulus or exactly two, one stable and the other unstable. One may ask to what extent this generalizes to minimal Plateau surfaces that are topologically ${\rm Cat}_Y$ -- i.e. an annulus with a disk glued in.
\begin{ques}
  Can one characterize the space of Plateau minimal surfaces that are topologically ${\rm Cat}_Y$ surfaces and span pairs of convex curves?  For generic pairs are there exactly two such surfaces, one stable and one unstable?
\end{ques}

The Convex Curves conjecture of Meeks \cite{MeeksConj} states that the only connected minimal surfaces spanning two convex curves in parallel planes are topological annuli.  One may ask an analogous question in the Plateau setting.
\begin{ques}
	Must a singular  minimal Plateau surface spanning a pair of convex curves be topologically ${\rm Cat}_Y$?  What if the curves are coaxial circles?
\end{ques}
Theorem \ref{MainThm} shows the answer is yes when the curves are vertical translations of one another provided the surface is cellular -- in the smooth setting this is a result of Ros \cite{Ros} and Schoen \cite{SchoenSymmetry}.

Finally, catenoids possess an interesting variational property.  Namely in  \cite{BernsteinBreiner} the authors show that an appropriate piece of the catenoid has the least area among minimal annuli whose boundaries lie in two fixed parallel planes. This was generalized in \cite{choedaniel} who increased the class of competitors to a larger class of (smooth) minimal surfaces of different topological type.  One may ask the same question in the class of singular minimal Plateau surfaces.
\begin{ques}
 Among minimal Plateau surfaces spanning two fixed parallel planes what is the least area singular surface?  Is it an appropriate piece of ${\rm Cat}_Y$?
\end{ques}

\appendix

\section{A rigidity result for geodesic nets}\label{appendix geonets} In the proof of the Removable Singularity lemma for Plateau minimal surfaces, see Lemma \ref{RemoveSingLem}, we have used a rigidity lemma for geodesic nets on the unit sphere whose statement and proof are presented in this appendix. We say that $\Gamma\subset\mathbb{S}^2$ is a {\bf geodesic net} if $\Gamma$  is a finite union $\Gamma=\bigcup_{i=1}^M\gamma_m$ of geodesic arcs $\gamma_i$ in $\mathbb{S}^2$ so that if $p\in\Gamma$, then, setting $I(p)=\{i:p\in\gamma_i\}$, one has that either $\#\,I(p)=1$ and $p\in{\rm int}\,\gamma_i$, or $\#\,I(p)\ge 2$, $p\in\pa\gamma_i$ for each $i\in I(p)$ and
\begin{equation}
  \label{gamma balance condition}
  \sum_{i\in I(p)}\nu^{{\rm co}}_{\gamma_i}(p)=0\,,
\end{equation}
where $\nu^{{\rm co}}_{\gamma_i}$ denotes the outer unit conormal to $\gamma_i$ in $\mathbb{S}^2$ at $p$. Of course, if $\#I(p)\ge 2$, then $\#I(p)\ge 3$. If $\Gamma$ is a geodesic net in $\mathbb{S}^2$, then the multiplicity one, $1$-dimensional varifold $V_\Gamma$ associated to $\Gamma$ is stationary in $\mathbb{S}^2$. Moreover, a cone $K$ in $\R^3$ with vertex at $0$ induces a multiplicity one, $2$-dimensional, stationary varifold $V_K$ in $\R^3$ if and only if $\Gamma=K\cap\pa B_1$ is a geodesic net in $\mathbb{S}^2\equiv\pa B_1$ thanks to \cite{allardalmgren76}. Of course, any finite union of equatorial circles defines a geodesic net. Equatorial circles and $Y$-nets (three equatorial half-circles meeting at two common end-points at $2\pi/3$-angles) are examples of geodesic nets that are also locally length minimizing, in the sense that they minimize $\mathcal{H}^1$ with respect to  Lipschitz deformations with sufficiently small support. The following lemma provides a rigidity statement which allows one to characterize these two length minimizing geodesic nets among all geodesic nets. The proof uses moving equatorial half-circles.

\begin{lem}[Rigidity of geodesic nets]\label{lemma geodesic nets}
  Let $\Gamma$ be a geodesic net in $\mathbb{S}^2$, let $e$ be a unit vector and let $\varepsilon>0$. If $\Gamma$ agrees either with an equatorial circle or with a $Y$-net in the spherical cap $\{x\cdot e>-\varepsilon\}$, then $\Gamma$ is either an equatorial circle or a $Y$-net.
\end{lem}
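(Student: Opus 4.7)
The plan is to establish first an auxiliary \textbf{non-containment principle}: \emph{a non-empty geodesic net in $\mathbb{S}^2$ cannot lie entirely in a strict spherical cap $\{x\cdot e\le -\varepsilon\}$, $\varepsilon>0$.} The cleanest proof uses stationarity of the associated 1-varifold $V_\Gamma$: for any arc-length geodesic $\gamma\subset\mathbb{S}^2$ the identity $\gamma''=-\gamma$ (as a curve in $\mathbb{R}^3$) yields $(f\circ\gamma)''=-(f\circ\gamma)$ for $f(x)=x\cdot e$, whence $\mathrm{div}_\Gamma\nabla^{\mathbb{S}^2}\!f=-f$ on each arc of $\Gamma$. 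Since the balance condition $\sum_i\nu_i=0$ at each node ensures stationarity of $V_\Gamma$, testing against $X=\nabla^{\mathbb{S}^2}\!f$ gives $\int_\Gamma f\,d\mathcal H^1=0$. If a non-empty $\Gamma$ were contained in $\{f\le -\varepsilon\}$, then $\int_\Gamma f\le -\varepsilon\,\mathcal H^1(\Gamma)<0$, a contradiction.

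For the \textbf{equatorial case}, suppose $\Gamma\cap\{x\cdot e>-\varepsilon\}=E$ with $E=\{x\cdot e=0\}$. Any arc of $\Gamma$ with an interior point in the open cap must locally coincide with $E$ and hence, by uniqueness of geodesics through a point with given tangent direction, lie on $E$. A non-$E$ arc cannot even have an endpoint on $E$, for a small forward segment from such an endpoint would enter the open cap and have to lie on $E$, a contradiction. Thus $\Gamma\setminus E$ is a finite union of arcs disjoint from $E$, the balance condition holds among themselves, and $\Gamma\setminus E$ is a geodesic net in $\{x\cdot e\le -\varepsilon\}$; by the non-containment principle, $\Gamma\setminus E=\emptyset$ and $\Gamma=E$.

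For the \textbf{$Y$-net case}, write $N=\gamma_1\cup\gamma_2\cup\gamma_3$ with spine endpoints $\pm p_0$, arcs $\gamma_k\subset C_k$, and the distinct great circles $C_k$ meeting pairwise only at $\pm p_0$. Since $\{x\cdot e\le -\varepsilon\}$ has spherical diameter strictly less than $\pi$, WLOG $p_0$ lies in the open cap. The main goal is to prove $N\subseteq\Gamma$; once that holds, the arcs of $\Gamma\setminus N$ lie in $\{x\cdot e\le -\varepsilon\}$, and since $N$ is internally balanced at each of its nodes (pass-through at interior points of $\gamma_k$, balanced $Y$-junction at $\pm p_0$), the arcs of $\Gamma\setminus N$ balance among themselves and form a geodesic net in $\{x\cdot e\le -\varepsilon\}$, which is empty by the non-containment principle. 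For the inclusion $N\subseteq\Gamma$, the three arcs of $\Gamma$ emanating from $p_0$ in the directions $w_k$ (forced by $\Gamma=N$ in the open cap) must each extend maximally along $C_k$; the key claim is that each extension terminates at $-p_0$ and hence equals $\gamma_k$. If some extension instead terminated at a node $q_k\in C_k\setminus\{\pm p_0\}$, then balance at $q_k$ would require off-$C_k$ residual arcs, which are confined to $\{x\cdot e\le -\varepsilon\}$ (else they would coincide with $\gamma_j$'s in the cap, forcing them to be on some $C_j$, hence to meet $C_k$ only at $\pm p_0$); the divergence identity applied to the sub-net formed by these residual arcs together with the partial extension back to $p_0$ yields a flux imbalance incompatible with the non-containment principle.

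\textbf{Main obstacle}: The delicate step is the inclusion $N\subseteq\Gamma$ in the $Y$-net case, namely ruling out premature termination of an arc of $\Gamma$ along $C_k$ before reaching $-p_0$. Controlling the residual arcs of $\Gamma$ off the great circles $C_k$ requires a flux version of the non-containment principle applied to an appropriately augmented partially-balanced sub-net, exploiting both the integral identity $\int f\,d\mathcal H^1=0$ (now with boundary contributions) and the location of $p_0$ in the open cap.
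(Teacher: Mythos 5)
Your auxiliary ``non-containment principle'' is correct and nicely proved: testing stationarity of $V_\Gamma$ (which does follow from \eqref{gamma balance condition}) with $X=\nabla^{\mathbb{S}^2}f$, $f(x)=x\cdot e$, indeed gives $\int_\Gamma f\,d\mathcal{H}^1=0$, so no non-empty geodesic net fits inside $\{f\le-\varepsilon\}$. But the reduction to that principle is exactly where the proof is missing. In the equatorial case you normalize $E=\{x\cdot e=0\}$, i.e.\ you assume the circle lies entirely inside the open cap; this is not a legitimate normalization, since the cap is prescribed by $e$ and the hypothesis only says $\Gamma\cap\{x\cdot e>-\varepsilon\}=C\cap\{x\cdot e>-\varepsilon\}$ for \emph{some} great circle $C$. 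In the application inside Lemma \ref{RemoveSingLem} the relevant circle passes through $\pm p_0\in\{x_3=0\}$ and dips well below the cap, so only a bit more than half of it is known to lie in $\Gamma$. For such a $C$ your decomposition $\Gamma=C\,\cup\,(\Gamma\setminus C)$ into two geodesic nets breaks down: if an arc of $\Gamma$ along $C$ terminates at a node $q$ with $x\cdot e\le-\varepsilon$ and does not continue along $C$, the residual arcs at $q$ are \emph{not} balanced (they carry the unmatched conormal of the terminating $C$-arc), so the non-containment principle does not apply to them. The same issue is the one you yourself flag as the ``main obstacle'' in the $Y$-net case: proving $N\subseteq\Gamma$, i.e.\ ruling out premature termination along the circles $C_k$ below the cap. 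Your sketch there (``flux imbalance incompatible with the non-containment principle'' for an augmented sub-net) is not carried out, and it is not clear it can be: once the sub-net has boundary, the identity becomes $\int f\,d\mathcal{H}^1=$ boundary terms whose signs are not controlled, so no contradiction falls out of the divergence identity alone. So the heart of the lemma -- propagating the model from the cap to all of $\mathbb{S}^2$ as a \emph{subset} of $\Gamma$ -- is left unproved.

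For comparison, the paper handles precisely this propagation by a moving-planes-type argument with rotating equatorial half-circles: rotate the half-circle $\Gamma_0=\Gamma\cap\{x_3\ge0\}$ about the axis through its endpoints $\pm p_0$, take the maximal rotation angle $\delta_0$ at which the rotated half-circle first touches $\Gamma$ away from $\pm p_0$, and use the strict maximum principle plus unique continuation for geodesics to show the touching half-circle is contained in $\Gamma$; the balance condition \eqref{gamma balance condition} at $p_0$ rules out $\Gamma$ being trapped in a wedge, and forces either $\delta_0=\pi$ (full circle case) or $\delta_0=2\pi/3$ ($Y$-net case). If you want to salvage your approach, you would need to supply an argument of this kind (or an honest version of your flux-imbalance claim) to obtain $C\subseteq\Gamma$, respectively $N\subseteq\Gamma$; after that, your non-containment principle does finish the proof cleanly, and in that endgame it is a genuinely simpler alternative to the paper's covering/connectedness argument.
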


\begin{proof}
  Without loss of generality let us assume that $e=e_3$, so that $\Gamma\cap\{x_3\ge0\}$ is equal to a equatorial half-circle $\Gamma_0$ contained in $\{x_3\ge0\}$ with endpoints $p_0$ and $-p_0$. In this way $\Gamma\cap\{x_3>-\varepsilon\}$ is either equal to $S_0\cap\{x_3>-\varepsilon\}$ or to $Y_0\cap\{x_3>-\varepsilon\}$, where  $S_0=\Gamma_0\cup(-\Gamma_0)$ is the unique equatorial circle containing $\Gamma_0$ and $Y_0$ is the unique $Y$-net containing $\Gamma_0$.

  \medskip

  Let $\{\Gamma(t)\}_{0\le t\le\pi}$ and $\{\Gamma'(t)\}_{0\le t\le\pi}$ denote the two distinct one-parameter families of equatorial half-circles obtained by rotating by $t$-radians $\Gamma_0$ around the axis defined by its endpoints $\pm p_0$ one clockwise the other counter-clockwise. In particular,  $\Gamma(t)$ and $\Gamma'(t)$ have the same endpoints of $\Gamma_0$, $\Gamma(0)=\Gamma'(0)=\Gamma_0$, and $\Gamma(\pi)=\Gamma'(\pi)=-\Gamma_0$ is the equatorial half-circle antipodal to $\Gamma_0$. By assumption, there are maximal intervals $[0,\delta_0)$ and $[0,\delta_0')$ such that
  \begin{equation}
    \label{gammat delta0}
      \Gamma(t)\cap\Gamma\setminus\{\pm p_0\}=\emptyset\qquad\forall t\in(0,\delta_0)\,,
  \end{equation}
  and such that the same holds for $\Gamma'(t)$ in place of $\Gamma(t)$ if $t\in(0,\delta_0')$. Notice that as $\Gamma$ agrees with either an equatorial circle or a $Y$-net on $\{x_3>-\varepsilon\}$, then either $\delta_0$ or $\delta_0'$ must be strictly larger than $\pi/2$. We assume, without loss of generality, that $\delta_0>\pi/2$.

  \medskip

  If $\delta_0=\pi$ but $\Gamma\cap\Gamma(\pi)\setminus\{\pm p_0\}=\emptyset$, then the validity of \eqref{gammat delta0} for every $t\in(0,\pi)$ implies that $\Gamma\subset W$ where $W$ is wedge given by the intersection of two different closed half-spaces. Therefore $\#I(p_0)\ge 2$ but \eqref{gamma balance condition} cannot hold at $p=p_0$. We deduce that if $\delta_0=\pi$, then $\Gamma\cap\Gamma(\pi)\setminus\{\pm p_0\}\ne\emptyset$. As a consequence, $\Gamma$ is touched by $\Gamma(\pi)$ at an interior point $q$, and locally near $q$ $\Gamma$ lies on one side of $\Gamma(\pi)$ thanks to \eqref{gammat delta0} with $\delta_0=\pi$: by the strict maximum principle we find that, locally near $q$, $\Gamma$ is equal to $\Gamma(\pi)$.  Let $I$ be the component of $\Gamma \cap \Gamma(\pi)$ containing $q$. As $I$ is the intersection of closed sets it is closed.  Moreover, for every $p\in I$, as $\Gamma$ lies on one side of $\Gamma(\pi)$ near $p$ one has $\# I(p)\leq 2$ and so $\Gamma$ is smooth near $p$.  Hence, we may appeal to a unique continuation to see that $I=\Gamma(\pi)$.  That is, $\Gamma(\pi)\subset\Gamma$. We have thus proved that
  \[
  S_0\subset\Gamma\,,\qquad \Gamma\cap H_0=\emptyset\,,
  \]
  where $H_0$ is one of the two open half-spaces bounded by $S_0$. It is easily seen that \eqref{gamma balance condition} and $\Gamma\cap H_0=\emptyset$ imply that $\#\,I(p)=1$ for every $p\in S_0$. In particular, by a covering argument, $\Gamma$ is equal to $S_0$ in an open neighborhood of $S_0$, and since $\Gamma$ is connected, this implies that $\Gamma=S_0$.

  \medskip

  We are left to discuss the case when $\delta_0\in(\pi/2,\pi)$. By the strict maximum principle, the regularity of points of $\Gamma$ lying on a $\Gamma(\delta_0)$ and the unique continuation principle we see that
  \[
  \Gamma_0\cup\Gamma(\delta_0)\subset\Gamma\,,\qquad \Gamma\cap V=\emptyset\quad\mbox{where}\quad V=\bigcup_{t\in(0,\delta_0)}\Gamma(t)\,.
  \]
  The fact that $\delta_0<\pi$ implies that $\Gamma\cap\{x_3>-\varepsilon\}=S_0\cap\{x_3>-\varepsilon\}$ cannot hold. Therefore it must be $\Gamma\cap\{x_3>-\varepsilon\}=Y_0\cap\{x_3>-\varepsilon\}$, which gives $\delta_0=2\pi/3$, $\delta_0'=2\pi/3$, and thus that $Y_0\subset \Gamma$. Now pick let $V'$ denote the smaller wedge bounded by $\Gamma_0$ and $\Gamma'(2\pi/3)$, and notice that similarly $V$ is the smaller wedge bounded by $\Gamma_0$ and $\Gamma(2\pi/3)$. If $q$ is in the interior of $\Gamma(2\pi/3)$, then the fact that $V\cap\Gamma=\emptyset$ combined with \eqref{gamma balance condition} implies that $\Gamma(2\pi/3)$ is equal to $\Gamma$ in a neighborhood of $q$; similarly, $V'\cap\Gamma=\emptyset$ and \eqref{gamma balance condition} imply that $\Gamma'(2\pi/3)$ is equal to $\Gamma$ in a neighborhood of each of its points. Finally, $\Gamma$ and $Y_0$ agree in a neighborhood of $\{\pm p_0\}$ and in a neighborhood of $\Gamma_0$ thanks to $\Gamma\cap\{x_3>-\varepsilon\}=Y_0\cap\{x_3>-\varepsilon\}$, so that, in conclusion, by a covering argument, $\Gamma$ is equal to $Y_0$ in an open neighborhood of $Y_0$. This proves that $\Gamma=Y_0$, as claimed.
\end{proof}

\bibliographystyle{is-alpha}
\bibliography{references}

\end{document}